\newcommand{\der}{{\rm d}}
\newcommand{\derd}{\delta}
\newcommand{\dtnp}{\Delta t_{n+1}}
\newcommand{\M}{\mathsf{M}}
\newcommand{\F}{\mathsf{F}}
\newcommand{\B}{\mathsf{B}}
\newcommand{\K}{\mathsf{K}}
\newcommand{\U}{\unk}
\newcommand{\Ua}{\tilde{u}_{h}}
\newcommand{\R}{\mathsf{R}}
\newcommand{\J}{\mathsf{J}}
\newcommand{\G}{\mathsf{G}}
\newcommand{\A}{\mathsf{A}}
\newcommand{\T}{\mathsf{T}}
\newcommand{\half}{\frac{1}{2}}
\newcommand{\testspace}{L^2(\Omega)}
\newcommand{\inflowboundary}{\Gamma_{\rm in}}
\newcommand{\domain}{\Omega}
\newcommand{\conv}{\boldsymbol{f}}
\newcommand{\rr}{\boldsymbol{r}}
\newcommand{\vv}{\boldsymbol{v}}
\newcommand{\normal}{\boldsymbol{n}}
\newcommand{\x}{\boldsymbol{x}}
\newcommand{\y}{\boldsymbol{y}}
\newcommand{\Conv}[2]{\conv'(#1) \cdot \gradient #2}
\newcommand{\uboundary}{u_D}
\newcommand{\mesh}{\mathcal{T}_h}
\newcommand{\element}[1][]{K_{#1}}
\newcommand{\fespace}{V_h}
\newcommand{\nodes}{\mathcal{N}_h}
\newcommand{\neighborhood}[1][]{\mathcal{N}_h(\domain_{#1})}
\newcommand{\shapef}[1][]{\varphi_{#1}}
\newcommand{\unk}{u_h}
\newcommand{\unknp}{\unk^{n+1}}
\newcommand{\unkn}{\unk^n}
\newcommand{\test}{v_h}
\newcommand{\feinterp}{\pi_h}
\newcommand{\smax}{\max{}_{\sigma}}
\newcommand{\detector}[1][]{\alpha_{#1}}
\newcommand{\graphl}{\ell}
\newcommand{\jump}[1]{\left\llbracket #1 \right\rrbracket}
\newcommand{\mean}[1]{%
	\sbox0{%
		\mathsurround=0pt 
		$\left\{\vphantom{#1}\right.\kern-\nulldelimiterspace$%
	}%
	\sbox2{\{}%
	\ifdim\ht0=\ht2
	\{\kern-.625\wd2 \{#1\}\kern-.625\wd2 \}%
	\else
	\left\{\kern-.7\wd0\left\{#1\right\}\kern-.7\wd0\right\}%
	\fi
}
\newcommand{\absn}[2][\varepsilon]{\left\vert #2 \right\vert_{1,#1}}
\newcommand{\absd}[2][\varepsilon]{\left\vert #2 \right\vert_{2,#1}}
\newcommand{\smthdetector}[1][]{ %
	\ifx\empty#1\empty%
	\alpha_{\varepsilon} %
	\else %
	\alpha_{\varepsilon,#1} %
	\fi}
\newcommand{\sdetectorAprox}[1][]{ %
	\ifx\empty#1\empty%
	\tilde{\alpha}_{\varepsilon} %
	\else %
	\tilde{\alpha}_{\varepsilon,#1} %
	\fi}
\newcommand{\gradient}{\boldsymbol\nabla}
\def\ij{{ij}}
\def\ji{{ji}}
\def\l2{{L^2}}
\def\h1{{H^1}}
\newcommand{\shock}[1][]{\alpha_{#1}}
\def\sym{{\rm sym}}
\newcommand{\symneigh}[1][]{\mathcal{N}^{\sym}_h(\domain_{#1})}
\newtheorem{theorem}{Theorem}[section]
\newtheorem{lemma}[theorem]{Lemma}
\newtheorem{corollary}[theorem]{Corollary}
\newtheorem{definition}[theorem]{Definition}
\newtheorem{remark}[theorem]{Remark}
\begin{document}

\title[Monotonicity-preserving differentiable nonlinear stabilization]{Monotonicity-preserving finite element schemes based on differentiable nonlinear stabilization}

\author[S. Badia]{ Santiago Badia$^\dag$}

\thanks{$\dag$ Universitat Polit\`ecnica de Catalunya, Jordi Girona1-3, Edifici C1, E-08034 Barcelona $\&$ Centre Internacional de M\`etodes Num\`erics en Enginyeria, Parc Mediterrani de la Tecnologia, Esteve Terrades 5, E-08860 Castelldefels, Spain E-mail: {\tt sbadia@cimne.upc.edu}.  SB was partially supported by the European Research Council under the FP7 Program Ideas through the Starting Grant No. 258443 - COMFUS: Computational Methods for Fusion Technology and the FP7 NUMEXAS project under grant agreement 611636. SB gratefully acknowledges the support received from the Catalan Government through the ICREA Acad\`emia Research Program. }

\author[J. Bonilla]{Jes\'us Bonilla$^\ddag$}
\thanks{$\ddag$ Universitat Polit\`ecnica de Catalunya, Jordi Girona1-3, Edifici C1, E-08034 Barcelona $\&$ Centre Internacional de M\`etodes Num\`erics en Enginyeria, Parc Mediterrani de la Tecnologia, Esteve Terrades 5, E-08860 Castelldefels, Spain E-mail: {\tt jbonilla@cimne.upc.edu}. JB gratefully acknowledges the support received from ''la Caixa'' Foundation through its PhD scholarship program.}

\date{\today}






\begin{abstract} 

In this work, we propose a nonlinear stabilization technique for scalar conservation laws with implicit time stepping. The method relies on an artificial diffusion method, based on a graph-Laplacian operator. It is nonlinear, since it depends on a shock detector. Further, the resulting method is linearity preserving. The same shock detector is used to gradually lump the mass matrix. The resulting method is LED, positivity preserving, and also satisfies a global DMP. Lipschitz continuity has also been proved. However, the resulting scheme is highly nonlinear, leading to very poor nonlinear convergence rates. We propose a smooth version of the scheme, which leads to twice differentiable nonlinear stabilization schemes. It allows one to straightforwardly use Newton's method and obtain quadratic convergence. In the numerical experiments, steady and transient linear transport, and transient Burgers' equation have been considered in 2D. Using the Newton method with a smooth version of the scheme we can reduce 10 to 20 times the number of iterations of Anderson acceleration with the original non-smooth scheme. In any case, these properties are only true for the converged solution, but not for iterates. In this sense, we have also proposed the concept of projected nonlinear solvers, where a projection step is performed at the end of every nonlinear iterations onto a FE space of admissible solutions. The space of admissible solutions is the one that satisfies the desired monotonic properties (maximum principle or positivity). 

\end{abstract}

\maketitle


\noindent{\bf Keywords:} Finite elements, discrete maximum principle, monotonicity, nonlinear solvers, shock capturing

\tableofcontents




\section{Introduction}
\label{sec:intro}

Many partial differential equations (PDEs) satisfy some sort of maximum principle or positivity property. However, numerical discretizations usually violate these structural properties at the discrete level, with implications in terms of accuracy and stability, e.g., leading to non-physical local oscillations. 

It is well-understood now how to build methods that satisfy some sort of discrete maximum principle (DMP) based on explicit time integration combined with finite volume or discontinuous Galerkin schemes \cite{leveque_book_2002,cockburn_rungekutta_2001}. However, implicit time integration is preferred in problems with multiple scales in time when the fastest scales are not relevant. E.g., under-resolved simulations of multi-scale problems in time are essential in plasma physics \cite{kritz_fusion_2008}. Unfortunately, implicit DMP-preserving hyperbolic solvers are scarce and not so well developed.

In the frame of finite element (FE) discretizations, the local instabilities present in the solution of hyperbolic problems have motivated the use of so-called shock capturing schemes based on artificial diffusion (see, e.g., \cite{hughes_sc_1986}). These methods introduce nonlinear stabilization, in contrast with classical SUPG-type linear stabilization techniques \cite{hughes_supg_1979,hughes_gls_1989}. Since linear schemes are at most first-order accurate and highly dissipative \cite{godunov59}, recent research on FE techniques for conservation laws has focused on the development of less dissipative nonlinear schemes. Many of these ideas come from the numerical approximation of convection dominated convection-diffusion-reaction (CDR), where one encounters similar issues. The cornerstone of these methods is the design of a nonlinear artificial diffusion that vanishes in smooth regions and works on discontinuities or sharp layers. Many residual-based diffusion methods have been considered so far (see, e.g., \cite{donea_book_2003} and references therein). Most of these approaches have failed to reach DMP-preserving methods. A salient exception is the method by Burman and Ern \cite{burman_nonlinear_2002}, which satisfies a DMP under mesh restrictions.  
Recently, due to some interesting novel approaches in the field, the state-of-the-art in nonlinear stabilization has certainly advanced \cite{badia_monotonicity-preserving_2014,burman_monotonicity_2015,dmitri_kuzmin_new_2015,barrenechea_2016,kuzmin_gradient-based_2016,barrenechea_analysis_2016,burman_stabilized_2005}. 

Implicit FE schemes for hyperbolic problems rely on four key ingredients:

\begin{enumerate}
	
	\item The first ingredient is the definition of the \emph{shock detector} that only activates the nonlinear diffusion around shocks/discontinuities. Recent nonlinear stabilization techniques have been developed based on shock detectors driven by gradient jumps  \cite{burman_nonlinear_2007,badia_monotonicity-preserving_2014} or edge differences \cite{dmitri_kuzmin_new_2015,barrenechea_2016,kuzmin_gradient-based_2016}. The use of such schemes was proposed in \cite{burman_nonlinear_2007} for 1D problems and extended to multiple dimensions in \cite{badia_monotonicity-preserving_2014}. A salient property of the scheme in \cite{badia_monotonicity-preserving_2014} is that it is DMP-preserving, but it relies on the DMP of the Poisson operator, which is only true under stringent constraints on the mesh. Another salient feature of the gradient-jump diffusion approach in \cite{badia_monotonicity-preserving_2014} is the fact that it leads to so-called linearity preserving methods, i.e., the artificial diffusion vanishes for first order polynomials. This property is related to high-order convergence on smooth regions \cite{kuzmin_constrained_2009}. A modification of the nonlinear diffusion in \cite{dmitri_kuzmin_new_2015} that also satisfies this property is proposed in \cite{kuzmin_gradient-based_2016}.
	
	\item The second ingredient is the \emph{amount of diffusion} to be introduced on shocks, which is the amount of diffusion introduced in a first order linear scheme. In this sense, one can consider flux-corrected transport techniques \cite{kuzmin_flux_2002}.
	
	\item The third ingredient is the form of the \emph{discrete viscous operator}. In order to keep the DMP on arbitrary meshes, Guermond and Nazarov have proposed to use graph-theoretic, instead of PDE-based, operators for the artificial diffusion terms. This approach has been used in \cite{xu_monotone_1999,barrenechea_2016} (for the steady-state convection-diffusion-reaction problem)  and in \cite{guermond_second-order_2014} (for linear conservation laws) combined with artificial diffusion definitions similar to the one in \cite{guermond_maximum-principle_2014}. 
	
	\item The fourth ingredient is the \emph{perturbation of the mass matrix}, in order to satisfy a DMP. Full mass lumping is one choice, but it introduces an unacceptable phase error. For continuous FE methods, improved techniques can be found in \cite{guermond_correction_2013}. Alternatively, limiting-type strategies are used, e.g., in \cite{dmitri_kuzmin_new_2015,kuzmin_gradient-based_2016}.
	
	\item The method in \cite{barrenechea_2016} is Lipschitz continuous, which is needed for the well-posedness of the resulting nonlinear scheme.  However, in practice, all the methods presented above are still highly nonlinear, and nonlinear convergence becomes very hard and expensive. It leads to a fifth additional ingredient that has not been considered so far in much detail. In order to reduce the computational cost of these schemes, we consider the \emph{smoothing} of the nonlinear artificial diffusion, to make it differentiable up to some fixed order. The possibility to define smooth nonlinear schemes can improve the nonlinear convergence of the methods and make them practical for realistic applications. Further, the smoothing step enables advanced linearization strategies based on Newton's method. It also involves the development of efficient nonlinear solvers, e.g., based on the combination of Newton, line search, and/or Anderson acceleration techniques.
\end{enumerate}
All the results commented above are restricted to linear (or bilinear) FEs. We are not aware of the existence of high-order implicit DMP-preserving FE schemes. For explicit time integration and limiters, second order methods can be found in  \cite{guermond_second-order_2014}. The use of hp-adaptive schemes that keep first order schemes around shocks has been proposed in \cite{hierro_2016}.

In this work, we propose a novel nonlinear stabilization method that satisfies a DMP, positivity, and local extremum diminishing (LED) properties at the discrete level. It combines: (1) a novel shock detector related to the one in \cite{badia_monotonicity-preserving_2014}, which is simple and linearity preserving; (2) the graph-Laplacian artificial viscous term proposed in \cite{guermond_maximum-principle_2014}; (3) an edge FCT-type definition of the amount of diffusion (see \cite{dmitri_kuzmin_new_2015}); (4) a novel gradual mass lumping technique that exploits the same shock detector used for the artificial diffusion. We prove that the resulting method ticks all the boxes, i.e., it is total variation diminishing (TVD), DMP, positivity-preserving, linearity preserving, Lipschitz continuous, and introduces low dissipation. With regard to the last point, we prove that the amount of diffusion is the minimum needed in our analysis to prove the DMP. Further, we consider a novel approach to design a smoothed version of the resulting scheme that is twice differentiable. We prove that linear preservation is weakly enforced in this case, but all the other properties remain unchanged. Finally, we analyze the effect of the smoothing in the computational cost, and observe a clear reduction in the CPU cost of the nonlinear solver when using the smooth version of the method proposed herein while keeping almost unchanged the sharp layers of the non-smooth version. Future work will be focused on the entropy stability analysis of these schemes for nonlinear scalar conservation laws. A partial result in this direction is the proof of entropy stability for a related method when applied to the 1D Burger's equations (see \cite{burman_nonlinear_2007}).

This work is structured as follows. In Sect. \ref{sec:preliminaries} the continuous problem and its discretization using the FE method are presented. Sect. \ref{sec:method} contains the formulation of a novel nonlinear stabilization method. Sect. \ref{sec:properties} is devoted to the monotonicity analysis of the  proposed method. An alternative approach is presented in Sect. \ref{sec:l2stab}. Lipschitz continuity of the methods is proved in Sect. \ref{sec:lipschitz}. A differentiable version the previous method is presented in Sect. \ref{sec:dif_stab}. Sect. \ref{sec:solver} is devoted to nonlinear solvers. Different numerical experiments are introduced in Sect. \ref{sec:num_exp}. Finally, in Sect. \ref{sec:conclusions} we draw some conclusions.

\section{Preliminaries}\label{sec:preliminaries}

\subsection{The continuous problem}

Let $\domain\subset\mathbb{R}^d$ be a bounded domain, where $d$ is the space dimension, and $(0,T]$ the time interval. The scalar conservation equation reads: find $u(\x,t)$ such that
\begin{equation}
	\partial_t u + \gradient\cdot \conv (u) = g, \quad \hbox{on } \Omega \times (0,T],
\end{equation}
where $\conv\in\text{Lip}(\mathbb{R};\mathbb{R}^d)$ is the flux. It is also subject to the initial condition $u(\x,0)=u_0\in L^\infty(\Omega)$ and boundary condition $u(\x,t) = \uboundary(\x,t)$ on the inflow $\inflowboundary\doteq \{ (\x,t) \in \partial\domain \times (0,T] \mid \conv(\x,t ) \cdot \normal(\x,t) < 0\}$.  There exist a unique entropy solution $u$ of the above problem that satisfies the  entropy inequalities $\partial_t E(u) + \gradient\cdot\boldsymbol{F}(u)\leq 0$ for all convex entropies $E\in\text{Lip}(\mathbb{R};\mathbb{R})$ with its associated entropy fluxes $\boldsymbol{F}_i(u)=\int_0^u E^\prime(v) \conv^\prime_i (v)\, \text{d}v,\,1\leq i\leq d$ (see Kru\v{z}kov \cite{kruzkov_first_1970}).
Let us consider the weak form of this problem consists in seeking $u$ such that $u =\uboundary$ on $\inflowboundary \times (0,T]$ and 
\begin{equation}\label{eq:cont_prob}
	(\partial_t u, v) + (\Conv{u}{u},v) = (g,v) \quad \forall v\in \testspace,
\end{equation} 
almost everywhere in $(0,T]$, with $g\in\testspace$.

\subsection{Finite element spaces and meshes}\label{sec:femspace}

Let $\mesh$ be a conforming partition of $\domain$ into elements, $\element$. Elements can be triangles or quadrilaterals for $d=2$, or tetrahedrals or hexahedra for $d=3$. The set of interpolation nodes of $\mesh$ is represented by $\nodes$, whereas $\nodes(\element)$ denotes the set of nodes belonging to element $\element \in \mesh$. Moreover, $\domain_i$ is the macroelement composed by the union of the elements $\element\in\mesh$ such that $i\in\nodes(\element)$. $\neighborhood[i]$ denotes the set of nodes in that macroelement. The continuous linear FE space is defined as
\begin{equation}
	\fespace \doteq \left\{ v_h \in \mathcal{C}^0(\domain): \; v_h\vert_{\element} \in P_k(\element) \;\; \forall \element \in \mesh \right\}
\end{equation}
for triangular or tetrahedral elements (replacing $P_1(\element)$ by $Q_1(\element)$ for quadrilateral or hexahedral elements). $P_1(\element)$ (resp., $Q_1(\element)$) is the space of polynomials with total (resp., partial) degree less or equal to $1$. The nodal basis of $\fespace$ is written $\{\shapef[i]\}_{i\in\nodes}$, and the FE functions can be expressed as $v_h = \sum_{i\in\nodes} \shapef[i]v_i$, where $v_i$ is the value of $v_h$ at node $i$.

\subsection{The semi-discrete problem}

The semi-discrete Galerkin FE approximation of \eqref{eq:cont_prob} reads: find $\unk\in\fespace$ such that $\unk(\inflowboundary, t) = \feinterp(\uboundary)$ and
\begin{equation}\label{eq:disc_prob}
	(\partial_t \unk, \test) + (\Conv{\unk}{\unk},\test) = (g,\test) \quad \forall \test\in \fespace,
\end{equation}
for $t\in (0,T]$, with initial conditions $\unk(\cdot,0) = \feinterp(u_0)$. $\feinterp$ denotes a FE interpolation, e.g., the Scott-Zhang projector \cite{scott_finite_1990}. 

Using the notation $\M \unk \doteq (\unk, \cdot)$ and $\F(w_h)\unk \doteq  (\Conv{w_h}{\unk},\cdot)$ we can write problem \eqref{eq:disc_prob} in compact form as 
\begin{equation}\label{eq:semid}
	\M \partial_t \unk + \F(\unk) \unk = g  
\end{equation}
in $\fespace'$, i.e., the dual space of $\fespace$. Further, we define $\M_{\ij} \doteq  (\shapef[j], \shapef[i])$, $\F_{\ij}(\unk) \doteq (\Conv{\unk}{\shapef[j]},\shapef[i])$, and $g_i \doteq ( g , \shapef[i] )$.

In order to carry out the time discretization of \eqref{eq:semid}, let us consider a partition of the time domain $(0,T]$ into sub-intervals $(t^n,t^{n+1}]$, with $0 \doteq t^0 < t^1  < \ldots < t^N \doteq T$. We consider the Backward-Euler (BE) implicit time integrator to keep at the time-discrete level the monotonicity properties of the semi-discrete problem, leading to the discrete problem: given $\unk^0 \doteq \pi_h(u_0) \in \fespace$, compute for $n=1, \ldots, N-1$
\begin{equation}\label{eq:algdis}
	\M \derd_t \unknp + \F(\unknp)\unknp = g \quad \hbox{ in } \fespace',
\end{equation} 
where $\derd_t \unknp \doteq \dtnp^{-1} (\unknp - \unkn)$, and $\dtnp \doteq | t^{n+1} - t^n|$. Implicit strong stability preserving Runge-Kutta methods \cite{ketcheson_optimal_2009} also preserve the monotonic properties at the discrete level \cite{ketcheson_optimal_2009}, under some restrictions on the time step size. For the sake of brevity we consider the BE scheme.

Systems \eqref{eq:semid} and \eqref{eq:algdis} will be supplemented with additional stabilization terms to minimize the oscillations generated by the Galerkin FE approximation. 
Of particular interest are methods which provide solutions that satisfy the following property for all nodes, for zero forcing terms.
\begin{definition}[Local DMP]\label{def:lDMP}
	A solution $u \in \fespace$ satisfies the local DMP if 
	\begin{equation}
		u_i^{\min} \leq u_i \leq u_i^{\max}, \quad \ \hbox{where } \ u_i^{\max}\doteq \max_{j\in\neighborhood[i]\backslash\{i\}} u_j, \quad
		u_i^{\min}\doteq \min_{j\in\neighborhood[i]\backslash\{i\}} u_j.
	\end{equation}
\end{definition}
Actually, for steady problems, if this is satisfied for all $i \in \nodes$, then  the extrema will be at the boundary and there exist no local extrema. 

Furthermore, it is useful to define \emph{local extremum diminishing} (LED) methods for transient problems.
\begin{definition}[LED]\label{def:led}
	A method is called LED if for $g=0$ and any time in $(0,T]$, the solution satisfies 
	\begin{equation}
		{\der_t u_i} \leq 0 \; \text{if } u_i \text{ is a maximum and} \quad
		{\der_t u_i} \geq 0 \; \text{if } u_i \text{ is a minimum}.
	\end{equation}
	For time-discrete methods, the same definition applies, replacing $\der_t$ by $\derd_t$.
\end{definition}

\section{Nonlinear stabilization}\label{sec:method}

We want to design a linearity preserving LED method for stabilizing the scalar semi-discrete hyperbolic problem \eqref{eq:semid} (or the discrete problem \eqref{eq:algdis}), described in the previous section. As written above, this method is based on a graph-theoretic approach. Let us consider a nonlinear stabilization operator $\B(\unk): \fespace \rightarrow\fespace'$ and denote $\B_\ij(\unk) \doteq \langle \B(\unk) \shapef[j] , \shapef[i]  \rangle$. Particularly, we require that the stabilization term will satisfy the following properties (see also \cite{guermond_maximum-principle_2014}):
\begin{enumerate}
	\item compact support: $\B_\ij(\unk)=0$ if $j\notin \neighborhood[i]$ for any $\unk \in \fespace$,
	\item symmetry: $\B_\ij(\unk) = \B_\ji(\unk)$ for any $\unk \in \fespace$,
	\item conservation: $\sum_{j\neq i} \B_\ij(\unk) = - \B_{ii}(\unk)$ for any $\unk \in \fespace$,
	\item linear preservation: $\B(\unk)=0$ for any $\unk \in P_1(\Omega)$.
\end{enumerate}
To achieve this properties we define the nonlinear stabilization term
\begin{equation}\label{eq:stab}
	\langle \B(w_h) \unk,\test \rangle \doteq \sum_{i\in\nodes}\sum_{j\in\neighborhood[i]}\nu_{ij}(w_h) v_i u_j \graphl(i,j), \qquad \unk, \ \test \in \fespace,
\end{equation}
where the graph-theoretic Laplacian is defined as $\ell(i,j) \doteq 2\delta_{ij} - 1$, and the artificial diffusion computed as
\begin{equation}\label{eq:nu}
	\arraycolsep=1.4pt\def\arraystretch{1.4}
	\begin{array}{ll}
		\nu_{ij}(w_h) \doteq \max \left\{ \detector[i](w_h) \F_{ij}(w_h), \detector[j](w_h) \F_{ji}(w_h), 0 \right\} \qquad \text{for } i\neq j, \\
		\nu_{ii}(w_h) \doteq \displaystyle\sum_{\substack{j\in\neighborhood[i]\\ j\neq i}} \nu_{ij}(w_h),
	\end{array}
\end{equation}
where $\detector[i](\cdot)$ is the shock detector. We note that this choice leads to a symmetric stabilization operator $\B(w_h)$. 
In order to define the shock detector, let us introduce some notation. Let
$i\in\nodes$ be a node of the mesh, $\vv$ a vector field, and $w$ a scalar field. Let $\rr_{ij} = \x_j - \x_i$ be the vector pointing from nodes $i$ to $j$ in ${\nodes}$ and $\hat{\rr}_{ij} \doteq \frac{\rr_{ij}}{|\rr_{ij}|}$. Let $\x_\ij^\sym$ be the point at the intersection between the line that passes through $\x_i$ and $\x_j$ and $\partial\domain_i$ that is not $\x_j$ (see Fig. \ref{fig:usym}). The set of all symmetric nodes with respect to node $i$ is represented with $\symneigh[i]$.
We define $\rr_\ij^\sym \doteq \x_\ij^\sym - \x_i$, and $u_j^\sym \doteq \unk(\x_\ij^\sym)$. Then, one
can define the jump and the mean of the unknown gradient at node $i$ in direction $\rr_\ij$ as
\begin{align}\label{eq:jump}
	\jump{\gradient \unk}_\ij &\doteq \frac{u_j - u_i}{|\rr_\ij|} + \frac{u_j^\sym - u_i}{|\rr_\ij^\sym|}, \\ \label{eq:mean}
	\mean{|\gradient \unk\cdot \hat{\rr}_\ij|}_{ij} &\doteq \frac{1}{2}
	\left(\frac{|u_j-u_i|}{|\rr_\ij|}+\frac{|u_j^\sym-u_i|}{|\rr_\ij^\sym|}\right).
\end{align}
We note that the symmetric nodes and their corresponding values $u_j^\sym$ are used in the proof of the following results, Lemma \ref{lem:eqiv_barrenechea}, and Theorem \ref{th:lipschitz}, but \emph{not required in the implementation} of \eqref{eq:algebraic}. For triangular or tetrahedral meshes, since $\gradient\unk$ is constant, $u_j^\sym$ can be computed easily as
$$ 
u_j^{\sym} = \unk (\x_i) + \gradient\unk(\x_i) \cdot \rr_\ij^\sym.
$$
For quadrilateral or hexahedral structured (possibly adapted and nonconforming) meshes, $u_j^\sym$ is also easy to obtain since $j^{\rm sym}$ is already in $\neighborhood[i]$. It also applies for symmetric meshes, when a mesh is said to be symmetric with respect to its internal nodes if for any $i \in \nodes$ all symmetric nodes $j^\sym\in\symneigh[i]$ already belong to $\neighborhood[i]$.

\begin{figure}[h]
	\centering
	\includegraphics[width=0.25\textwidth]{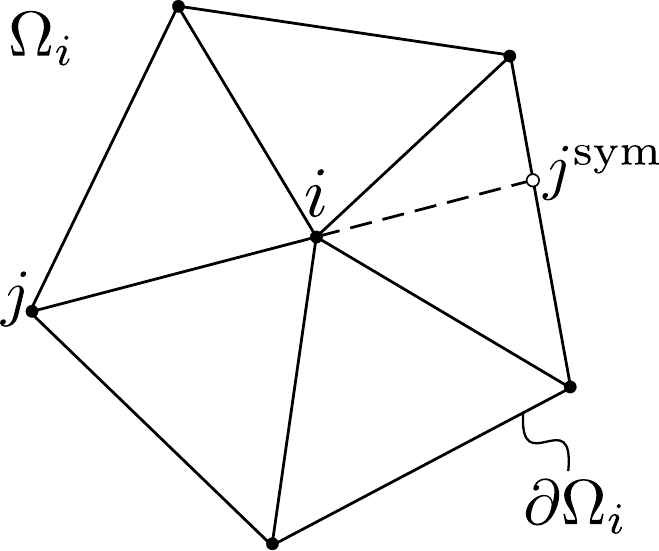}
	\caption{Representation of the symmetric node $j^{\rm sym}$ of $j$ with respect to $i$.}
	\label{fig:usym}
\end{figure}

Making use of these definitions, the proposed shock detector at node $i \in \nodes$ for a FE solution $\unk$ reads:
\begin{equation}\label{eq:alpha_lp}
	\detector[i](u_h) \doteq \left\{\begin{array}{cc}  \left[\frac{\left|{\sum_{j\in\neighborhood[i]} \jump{\gradient \unk}_{ij}}\right|}{\sum_{j\in\neighborhood[i]} 2\mean{\left|\gradient \unk \cdot \hat{\rr}_{ij}\right|}_{ij}} \right]^q & \text{if } \sum_{j\in\neighborhood[i]} \mean{\left|\gradient \unk\cdot \hat{\rr}_{ij}\right|}_{ij} \neq 0, \\
		0 & \text{otherwise},
	\end{array}\right. 
\end{equation}
for some $q \in \mathbb{R}^+$. We note that this shock detector is motivated from \cite{badia_monotonicity-preserving_2014}, where the directional nodal-wise jumps and mean values are first used for such purposes. For triangular or tetrahedral meshes, the only difference strives in the fact that the supremum over all $j\in\neighborhood[i]$ in both the numerator and denominator was used in \cite{badia_monotonicity-preserving_2014} instead of the sum. In the next lemma we show that in fact \eqref{eq:alpha_lp} detects extrema.

\begin{lemma}\label{lm:shock}
	The shock detector $\detector[i](\unk)$ defined in \eqref{eq:alpha_lp} is equal to 1 if $\unk$ has an extremum at point $\x_i$. Otherwise, $\detector[i](\unk)<1$ in general, and $\detector[i](\unk) = 0$  for $q = \infty$. 
\end{lemma}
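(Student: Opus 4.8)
The plan is to read $\detector[i](\unk)$ as the $q$-th power of a ratio between a net directional slope and a total directional slope at $\x_i$, to cap this ratio by $1$ with the triangle inequality, and to identify the equality case with the presence of an extremum. First I would record the elementary identity $2\mean{|\gradient\unk\cdot\hat{\rr}_{ij}|}_{ij}=\left|\frac{u_j-u_i}{|\rr_{ij}|}\right|+\left|\frac{u_j^\sym-u_i}{|\rr_{ij}^\sym|}\right|$, which is just \eqref{eq:mean}, and compare it termwise with the two summands defining $\jump{\gradient\unk}_{ij}$ in \eqref{eq:jump}. The triangle inequality gives $\left|\jump{\gradient\unk}_{ij}\right|\le 2\mean{|\gradient\unk\cdot\hat{\rr}_{ij}|}_{ij}$ for each $j\in\neighborhood[i]$; summing over $j$ and applying the triangle inequality once more yields $\left|\sum_{j\in\neighborhood[i]}\jump{\gradient\unk}_{ij}\right|\le\sum_{j\in\neighborhood[i]}\left|\jump{\gradient\unk}_{ij}\right|\le\sum_{j\in\neighborhood[i]}2\mean{|\gradient\unk\cdot\hat{\rr}_{ij}|}_{ij}$. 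Hence, whenever the denominator in \eqref{eq:alpha_lp} does not vanish, the bracketed quantity lies in $[0,1]$, so $\detector[i](\unk)\in[0,1]$; and as soon as that bracket is strictly below $1$, letting $q\to\infty$ gives $\detector[i](\unk)=0$.

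Next I would treat the extremum case. Suppose $\unk$ attains a maximum at $\x_i$, that is, $u_i\ge u_j$ for every $j\in\neighborhood[i]$. Because the elements are $P_1$ (or $Q_1$), $\unk|_{\element}$ attains its extrema at the vertices of $\element$, and since $\mesh$ is conforming this makes $\unk$ continuous and piecewise linear on the closed macroelement $\overline{\domain_i}$, so $\max_{\overline{\domain_i}}\unk$ is attained at a vertex of $\domain_i$ and therefore equals $u_i$; in particular, since $\x_{ij}^\sym\in\partial\domain_i\subset\overline{\domain_i}$, we get $u_j^\sym=\unk(\x_{ij}^\sym)\le u_i$ for every $j$. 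Then both summands $\frac{u_j-u_i}{|\rr_{ij}|}$ and $\frac{u_j^\sym-u_i}{|\rr_{ij}^\sym|}$ are nonpositive, so $\jump{\gradient\unk}_{ij}\le 0$ with $\left|\jump{\gradient\unk}_{ij}\right|=2\mean{|\gradient\unk\cdot\hat{\rr}_{ij}|}_{ij}$ for every $j$, and all these jumps share a common sign, so $\left|\sum_j\jump{\gradient\unk}_{ij}\right|=\sum_j\left|\jump{\gradient\unk}_{ij}\right|$. Thus both triangle inequalities above hold with equality, the numerator and the denominator of \eqref{eq:alpha_lp} coincide, and they are nonzero unless $\unk$ is constant on $\domain_i$; in that non-degenerate situation $\detector[i](\unk)=1^q=1$. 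The minimum case is identical after reversing every inequality.

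Finally, for the second claim I would run the equality analysis in reverse: equality throughout the two triangle inequalities forces, for each $j$, the two summands of $\jump{\gradient\unk}_{ij}$ to share a common sign, and it forces all the $\jump{\gradient\unk}_{ij}$ to share a common sign; together these imply $u_j\le u_i$ and $u_j^\sym\le u_i$ for all $j$ (or all the reversed inequalities), that is, $\unk$ has an extremum at $\x_i$. Hence, if $\unk$ has no extremum at $\x_i$, the bracket in \eqref{eq:alpha_lp} is strictly less than $1$, so $\detector[i](\unk)<1$, and $\detector[i](\unk)=0$ when $q=\infty$. The step I expect to be most delicate is not the triangle-inequality arithmetic but the handling of the symmetric values $u_j^\sym$: one has to use conformity of $\mesh$ together with $\x_{ij}^\sym\in\partial\domain_i$ to pass from the fact that $u_i$ is a maximum over the nodes of $\neighborhood[i]$ to the inequality $u_i\ge u_j^\sym$, and one has to explicitly set aside (or declare by convention) the degenerate case in which the denominator in \eqref{eq:alpha_lp} vanishes, namely $\unk$ constant on $\domain_i$, where $\detector[i]$ is defined to be $0$ rather than computed from the ratio.
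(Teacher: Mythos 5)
Your proof is correct and follows essentially the same route as the paper's: bound the numerator by the denominator via the triangle inequality and identify the equality case with the presence of an extremum at $\x_i$. You are in fact somewhat more careful than the paper, which simply asserts that the terms $u_j - u_i$ and $u_j^\sym - u_i$ all share a sign at an extremum, whereas you justify $u_j^\sym \le u_i$ via piecewise linearity on the macroelement and you explicitly flag the degenerate constant case in which the denominator vanishes and the detector is set to zero by convention.
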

\begin{proof}
	Using the fact that $\unk$ has an extremum at $\x_i$,
	\begin{align}
		\left | \sum_{j\in\neighborhood[i]} \jump{\gradient \unk}_{ij} \right| &= 
		\left | \sum_{j\in\neighborhood[i]}
		\frac{u_j - u_i}{|\rr_\ij|} + \frac{u_j^\sym - u_i}{|\rr_\ij^\sym|}\right| \\
		&= 
		\sum_{j\in\neighborhood[i]}
		\frac{\left|u_j - u_i\right|}{|\rr_\ij|} + \frac{\left|u_j^\sym - u_i\right|}{|\rr_\ij^\sym|} =
		\sum_{j\in\neighborhood[i]} 2\mean{ \left| \gradient \unk \cdot \hat{\rr}_{ij} \right|},
	\end{align}
	since $u_j-u_i$ has the same sign (or it is equal to zero) in all directions. It proves that $\detector[i](\unk) = 1$ on an extremum. In fact, if the solution does not have an extremum, these quantities neither can have the same sign nor be zero in all cases, and we only have
	\begin{equation}\label{eq:bounjump}
		\left | \sum_{j\in\neighborhood[i]} \jump{\gradient \unk}_{ij} \right| < 
		\sum_{j\in\neighborhood[i]}
		\frac{\left|u_j - u_i\right|}{|\rr_\ij|} + \frac{\left|u_j^\sym - u_i\right|}{|\rr_\ij^\sym|} =
		\sum_{j\in\neighborhood[i]} 2\mean{ \left| \gradient \unk \cdot \hat{\rr}_{ij} \right|}.
	\end{equation}
	Hence, $\detector[i](\unk) < 1$ when there is no extremum at $\x_i$. Moreover, for $q = \infty$, the shock detector vanishes in all the nodes that are not extrema.
\end{proof}

In addition to the nonlinear stabilization term $\B(\unk)$, it is necessary 
to do a mass matrix lumping to prove that the LED property is
satisfied. In the numerical analysis, it is enough to make this
approximation when testing against the shape functions corresponding to nodes related to extrema, which is identified by the shock
detector. Therefore, we propose the following stabilized semi-discrete version of \eqref{eq:disc_prob}:
\begin{equation}\label{eq:discrete}
	\arraycolsep=1.4pt\def\arraystretch{1.4}
	\begin{array}{rl}
		(1-\detector[i](u_h))(\partial_t \unk, \shapef[i]) & +\: \detector[i](u_h)(\partial_t u_i, \shapef[i]) + (\Conv{\unk}{\unk}, \shapef[i]) \\
		&+\displaystyle\sum_{j\in\neighborhood[i]} \nu_{ij}(\unk) v_i u_j l(i,j) = (g,\shapef[i]) \quad \text{for any } i \in \nodes,
	\end{array}
\end{equation}
with the definition of the shock detector \eqref{eq:alpha_lp} and the nonlinear artificial diffusion \eqref{eq:nu}. Thus, the definition of the mass matrix is nonlinear
\begin{equation}\label{eq:asymmetricmass}
	\M_\ij(\unk) \doteq (1-\detector[i](u_h))(\shapef[j], \shapef[i])  +\: \detector[i](u_h)( \delta_{ij}, \shapef[i]).
\end{equation}
It can be understood as a mass matrix with gradual lumping. Full lumping is only attained at extrema. 
Denoting $\K(\unk) \doteq \F(\unk) + \B(\unk)$, the stabilized problem \eqref{eq:discrete} can be expressed in compact form as 
\begin{equation}\label{eq:algebraic}
	\M(\unk) \der_t \unk + \K(\unk)\unk = g \quad \hbox{ in } \fespace'.
\end{equation} 
Analogously for the discrete problem \eqref{eq:algdis},
\begin{equation}\label{eq:algebraicdis}
	\M(\unknp) \derd_t \unknp + \K(\unknp)\unknp = g^{n+1} \quad \hbox{ in } \fespace'.
\end{equation}
Finally, let us note that the  shock detector \eqref{eq:alpha_lp} leads to the one
of Barrenechea and co-workers \cite{barrenechea_2016},
\begin{equation}\label{eq:alpha_lp_aprx}
	\tilde{\alpha}_i \doteq\left\{\begin{array}{cc} \left( \frac{\left\vert\sum_{j\in\neighborhood[i]}  u_i - u_j\right\vert}{\sum_{j\in\neighborhood[i]} \vert u_i - u_j\vert}\right)^q & 
		\text{if } \sum_{j\in\neighborhood[i]} \vert u_i - u_j\vert\neq 0, \\
		0 & \text{otherwise},
	\end{array}\right. 
\end{equation} 
when restricted to symmetric meshes of equilateral triangles.
\begin{lemma}\label{lem:eqiv_barrenechea}
	For a symmetric triangular mesh where all the edges have the same length, $\detector[i]$ in \eqref{eq:alpha_lp} is identical to $\tilde{\alpha}_i$ in \eqref{eq:alpha_lp_aprx}. 
\end{lemma}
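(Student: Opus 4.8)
The plan is to reduce both expressions in \eqref{eq:alpha_lp} and \eqref{eq:alpha_lp_aprx} to the common quotient $\big(\,|\sum_{j}(u_i-u_j)|\,/\,\sum_{j}|u_i-u_j|\,\big)^q$ by exploiting the two structural hypotheses on the mesh. Throughout, fix an interior node $i\in\nodes$ (the detector is only evaluated at interior nodes) and read the sums $\sum_{j\in\neighborhood[i]}$ as running over $j\neq i$. The first step is to record the two geometric facts the algebra needs. Since the mesh is \emph{symmetric}, for every $j\in\neighborhood[i]\setminus\{i\}$ the point $\x_{ij}^\sym$ coincides with a mesh node of $\neighborhood[i]$, which we denote $j^\sym$, so that $u_j^\sym=\unk(\x_{ij}^\sym)=u_{j^\sym}$; moreover the map $j\mapsto j^\sym$ is an involution of $\neighborhood[i]\setminus\{i\}$ (because $\x_{ij}^\sym$ is the exit point, on $\partial\domain_i$, of the ray from $\x_i$ opposite to $\x_j$, and $\domain_i$ is star-shaped with respect to $\x_i$), hence a bijection of that set. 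Since in addition \emph{all edges have the same length} $h$, we have $|\rr_{ij}|=h$, and because $j^\sym$ is joined to $i$ by a mesh edge we also have $|\rr_{ij}^\sym|=|\rr_{i,j^\sym}|=h$.

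With these facts in hand, I would substitute $|\rr_{ij}|=|\rr_{ij}^\sym|=h$ into the numerator of \eqref{eq:alpha_lp}: by \eqref{eq:jump}, $\jump{\gradient\unk}_{ij}=h^{-1}\big((u_j-u_i)+(u_{j^\sym}-u_i)\big)$, so summing over $j$ and using the bijection $j\mapsto j^\sym$ to rewrite $\sum_{j}(u_{j^\sym}-u_i)=\sum_{j}(u_j-u_i)$ yields $\sum_{j\in\neighborhood[i]}\jump{\gradient\unk}_{ij}=\tfrac{2}{h}\sum_{j\in\neighborhood[i]}(u_j-u_i)$, whence the numerator equals $\tfrac{2}{h}\big|\sum_{j\in\neighborhood[i]}(u_i-u_j)\big|$. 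The denominator is handled identically: by \eqref{eq:mean}, $2\mean{\left|\gradient\unk\cdot\hat{\rr}_{ij}\right|}_{ij}=h^{-1}\big(|u_j-u_i|+|u_{j^\sym}-u_i|\big)$, and the same bijection gives $\sum_{j\in\neighborhood[i]}2\mean{\left|\gradient\unk\cdot\hat{\rr}_{ij}\right|}_{ij}=\tfrac{2}{h}\sum_{j\in\neighborhood[i]}|u_i-u_j|$. Dividing, the common factor $2/h$ cancels, so the bracketed quotient in \eqref{eq:alpha_lp} equals the bracketed quotient in \eqref{eq:alpha_lp_aprx}; raising to the power $q$ gives $\detector[i]=\tilde{\alpha}_i$.

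Finally I would dispose of the case split: from the computation above, $\sum_{j\in\neighborhood[i]}\mean{\left|\gradient\unk\cdot\hat{\rr}_{ij}\right|}_{ij}=0$ exactly when $\sum_{j\in\neighborhood[i]}|u_i-u_j|=0$, so the ``otherwise'' branches of \eqref{eq:alpha_lp} and \eqref{eq:alpha_lp_aprx} (both returning $0$) agree as well, which completes the proof.

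The only nontrivial point is the geometric bookkeeping in the first step: checking that on a symmetric triangular mesh with equal edge lengths the ray from $\x_i$ opposite to $\x_j$ meets $\partial\domain_i$ precisely at a node at distance $h$, and that the induced pairing is a bijection of $\neighborhood[i]\setminus\{i\}$. Once this is granted, the identity is the short computation above, with no estimate or limiting argument involved.
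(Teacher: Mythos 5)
Your proof is correct and follows essentially the same route as the paper's: both use the mesh symmetry to identify $u_j^\sym$ with a nodal value $u_{j^\sym}$ and pair the terms via the bijection $j\mapsto j^\sym$, and both use the equal edge lengths to pull out and cancel the common factor $1/h$ from the jump and mean sums. Your explicit treatment of the degenerate ``otherwise'' branch is a small completeness bonus the paper leaves implicit, but it does not change the argument.
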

\begin{proof} 
	For symmetric meshes, for every $j \in \neighborhood[i]$, $j^\sym \in \neighborhood[i]$.  So, we can group nodes in $\neighborhood[i]$ in pairs, getting
	$$
	2 \sum_{j \in \neighborhood[i]} (u_i - u_j) = \sum_{j \in \neighborhood[i]} (u_i - u_j + u_i - u_\ij^\sym).
	$$
	We proceed analogously for the mean value. Further, since $\rr_{ij}$ is identical for all $j \in \neighborhood[i]$ by assumption, we get
	$$
	\frac{\left|\sum_{j \in \neighborhood[i]}\jump{\gradient \unk}_\ij \right| }{2\sum_{j \in \neighborhood[i]} \mean{|\gradient \unk \cdot \hat{\rr}_\ij|}_\ij} 
	= \frac{\left| \sum_{j \in \neighborhood[i]} u_i - u_j \right|}{\sum_{j \in \neighborhood[i]} |u_i - u_j|}.
	$$
\end{proof}

For arbitrary symmetric meshes the methods only differ on the weights of the terms in the sums in \eqref{eq:alpha_lp} and all the required properties stated in \eqref{eq:conditions} are readily satisfied for the use of the shock detector in \eqref{eq:alpha_lp_aprx}. In general meshes, the shock detectors are different, and the one in \eqref{eq:alpha_lp_aprx} is not linearity preserving.

\section{Monotonicity properties}\label{sec:properties}
In the sequel, we prove that the scheme \eqref{eq:discrete} is LED. First, we define a set of necessary conditions on the nonlinear discrete operators that lead to LED schemes. They are the nonlinear extension of the ones  for linear systems (see, e.g., \cite{dmitri_kuzmin_new_2015}). 
\begin{theorem}\label{th:properties}
	The semi-discrete problem \eqref{eq:algebraic} is LED if $g(\x) = 0$ in $\domain$ and, for every node $i \in \nodes$  such that  $u_i$ is a local extremum, it holds:
	\begin{align}
		& \M_{ij}(\unk) \doteq \delta_\ij m_i, \hbox{ with } m_i > 0, \label{eq:conditions}\\
		& \K_\ij(\unk) \leq 0\; \ \forall i\neq j,  \ \hbox{and} \ \sum_{j \in \neighborhood[i]} \K_\ij(\unk) = 0.
	\end{align}
	Moreover, for $g(\x) \leq 0$ (resp. $g(\x) \geq 0$) in $\domain$ and for all $i \in \nodes$ such that $u_i$ is a local maximum (resp. minimum), if \eqref{eq:conditions} holds the maximum (resp. minimum) is diminishing (resp. increasing). These results are also true for the discrete problem \eqref{eq:algebraicdis}. Furthermore, the discrete problem \eqref{eq:algebraicdis} is positivity-preserving for $g = 0$ and $u_0 \geq 0$.
\end{theorem}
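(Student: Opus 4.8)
The plan is to split the argument into an abstract implication --- that the listed conditions on $\M$ and $\K$ force the stated monotone behaviour, both in the semi-discrete setting and for Backward Euler --- and a concrete verification that the scheme \eqref{eq:discrete}--\eqref{eq:algebraicdis} satisfies those conditions at precisely the nodes where they are invoked, namely local extrema; positivity of \eqref{eq:algebraicdis} then drops out as a corollary of the discrete version of the first part.

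For the abstract implication I would fix a node $i$ at which $u_i$ is a local maximum, so $u_i\geq u_j$ for all $j\in\neighborhood[i]$, and extract the $i$-th equation of \eqref{eq:algebraic}. Since $\M_\ij(\unk)=\delta_\ij m_i$ that row decouples into $m_i\,\der_t u_i+\sum_{j\in\neighborhood[i]}\K_\ij(\unk)u_j=g_i$ with $g_i=(g,\shapef[i])$, and eliminating the diagonal entry via the conservation identity $\K_{ii}(\unk)=-\sum_{j\neq i}\K_\ij(\unk)$ gives
\begin{equation*}
  m_i\,\der_t u_i = g_i + \sum_{\substack{j\in\neighborhood[i]\\ j\neq i}}\K_\ij(\unk)\,(u_i-u_j).
\end{equation*}
Because $\K_\ij(\unk)\leq 0$ and $u_i-u_j\geq 0$ for $j\neq i$, every summand is nonpositive, so $g=0$ forces $\der_t u_i\leq 0$ (the LED property), while $g\leq 0$ --- which gives $g_i\leq 0$ since $\shapef[i]\geq 0$ for linear and bilinear elements --- still forces a diminishing maximum; the minimum case is the mirror image, reversing every inequality. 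The discrete statement is obtained verbatim with $\der_t$ replaced by $\derd_t$ and $\unk$ by $\unknp$: at a node where $u_i^{n+1}$ is a local maximum of $\unknp$ one gets $m_i\,\derd_t u_i^{n+1}=g_i^{n+1}+\sum_{j\neq i}\K_\ij(\unknp)(u_i^{n+1}-u_j^{n+1})\leq 0$, hence $u_i^{n+1}\leq u_i^n$.

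For the concrete verification I would use Lemma \ref{lm:shock}: at an extremum node $\detector[i](\unk)=1$ exactly, so in \eqref{eq:asymmetricmass} the consistent term disappears and the mass row becomes lumped, $\M_\ij(\unk)=\delta_\ij m_i$ with $m_i=(1,\shapef[i])=\int_\domain\shapef[i]>0$; the key point is that lumping is needed \emph{only} at extrema, which is exactly what the gradual-lumping construction delivers. For $\K=\F+\B$, the conservation identity follows from the partition of unity, $\sum_j\F_\ij(\unk)=(\conv'(\unk)\cdot\gradient(\sum_j\shapef[j]),\shapef[i])=(\conv'(\unk)\cdot\gradient 1,\shapef[i])=0$, together with property (3) of $\B$, $\sum_j\B_\ij(\unk)=0$. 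For the off-diagonal sign, if $j\neq i$ then $\B_\ij(\unk)=\nu_\ij(\unk)\,\ell(i,j)=-\nu_\ij(\unk)$, and since $\detector[i](\unk)=1$ we have $\nu_\ij(\unk)=\max\{\F_\ij(\unk),\detector[j](\unk)\F_\ji(\unk),0\}\geq\F_\ij(\unk)$, whence $\K_\ij(\unk)=\F_\ij(\unk)-\nu_\ij(\unk)\leq 0$. Hence \eqref{eq:conditions} holds at every extremum node and the abstract implication applies to both \eqref{eq:algebraic} and \eqref{eq:algebraicdis}.

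Finally, for positivity of \eqref{eq:algebraicdis} with $g=0$ I would induct on $n$ starting from $\unk^0=\feinterp(u_0)\geq 0$: choose a node $i$ attaining $\min_{j\in\nodes}u_j^{n+1}$; it is then a local minimum, so the minimum form of the discrete estimate above gives $u_i^{n+1}\geq u_i^n\geq 0$, and therefore $\unknp\geq 0$ at all interior nodes --- on inflow nodes one additionally uses nonnegativity of the prescribed datum. I do not expect a deep obstacle; the only real care is organisational, namely to keep the LED and positivity arguments confined to the finitely many extremum nodes of the current iterate, so that the non-lumped mass rows and the possibly positive off-diagonal entries of $\K$ away from extrema never enter the estimate --- and this is precisely where the sharp identity $\detector[i]=1$ at extrema from Lemma \ref{lm:shock} is doing the work.
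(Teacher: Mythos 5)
Your argument is correct and follows essentially the same route as the paper: the same row-wise estimate $m_i\,\der_t u_i \leq g_i$ obtained from the sign condition on the off-diagonal entries of $\K$ together with the zero row sum (the paper replaces $u_j$ by $u_i$ under the sum rather than subtracting the diagonal, but the computation is identical), and the same minimum-node induction/contradiction for positivity. Note only that your middle paragraph, verifying that the concrete scheme \eqref{eq:discrete} actually satisfies \eqref{eq:conditions} at extrema, is not part of this theorem's proof in the paper --- it is the content of the subsequent Theorem \ref{th:led} --- so including it here is harmless but redundant.
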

\begin{proof}
	Let us start proving the LED property. If $u_i$ is a maximum, from \eqref{eq:algebraic}, conditions in \eqref{eq:conditions}, and the fact that $\alpha_i(\unk) = 1$, we have:
	$$
	g_i = m_i \der_t {u}_i + \sum_{j \in \neighborhood[i]} \K_{\ij}(\unk) u_j \geq m_i \der_t u_i + \sum_{j \in \neighborhood[i]} \K_{\ij}(\unk) u_i  =  m_i \der_t{u}_i,
	$$
	for $m_i \doteq \int_\Omega \shapef[i] \der \Omega$. As a result, $\der_t {u}_i \leq 0$ and thus LED. We proceed analogously for the minimum. The proof is analogous for the discrete problem with BE time integration.
	
	Next, we prove positivity. Let us consider that at some time step $m$ the solution becomes negative, and consider the node $i$ in which the minimum value is attained. Using the previous result for a minimum at the discrete level, we have that $\derd_t u_i^m \geq 0$ and thus $u_i^m \geq u_i^{m-1}$. It leads to a contradiction, since $u_i^{m-1} \geq 0$. Thus, the solution must be positive at all times.
\end{proof}

\begin{theorem}[LED]\label{th:led}
	The semi-discrete  (resp., discrete) problem \eqref{eq:algebraic} (resp., \eqref{eq:algebraicdis}) leads to solutions $\unk \in \fespace$ that enjoy the LED property in Def. \ref{def:led} for any $q \in \mathbb{R}^+$. 
\end{theorem}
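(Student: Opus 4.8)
The plan is to reduce Theorem \ref{th:led} to the sufficient conditions already isolated in Theorem \ref{th:properties}: it suffices to verify that at every node $i\in\nodes$ where $\unk$ attains a local extremum, (i) the nonlinear mass matrix \eqref{eq:asymmetricmass} degenerates to a positive diagonal entry, $\M_{ij}(\unk)=\delta_{ij}m_i$ with $m_i>0$, and (ii) the transport-plus-stabilization operator $\K(\unk)=\F(\unk)+\B(\unk)$ has nonpositive off-diagonal entries, $\K_{ij}(\unk)\leq 0$ for $j\neq i$, together with vanishing row sums $\sum_{j\in\neighborhood[i]}\K_{ij}(\unk)=0$. Once (i)--(ii) are in place, Theorem \ref{th:properties} gives the LED property directly (and the claim for the BE-discrete problem \eqref{eq:algebraicdis} follows verbatim with $\der_t$ replaced by $\derd_t$).

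The first step is to invoke Lemma \ref{lm:shock}: if $u_i$ is a local extremum then $\detector[i](\unk)=1$, and this holds for \emph{every} $q\in\mathbb{R}^+$, which is precisely where the $q$-independence of the conclusion enters. Substituting $\detector[i](\unk)=1$ into \eqref{eq:asymmetricmass} yields $\M_{ij}(\unk)=(\delta_{ij},\shapef[i])=\delta_{ij}m_i$ with $m_i=\int_\domain\shapef[i]\,\der\Omega>0$, so (i) holds. For (ii), note from \eqref{eq:stab} and $\graphl(i,j)=2\delta_{ij}-1$ that $\B_{ij}(\unk)=-\nu_{ij}(\unk)$ for $j\neq i$; since $\detector[i](\unk)=1$, the definition \eqref{eq:nu} gives $\nu_{ij}(\unk)=\max\{\F_{ij}(\unk),\,\detector[j](\unk)\F_{ji}(\unk),\,0\}\geq\F_{ij}(\unk)$, whence $\K_{ij}(\unk)=\F_{ij}(\unk)-\nu_{ij}(\unk)\leq 0$ for $j\neq i$. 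For the row sum, the stabilization operator satisfies the conservation property (property~3 following \eqref{eq:stab}), so $\sum_{j\in\neighborhood[i]}\B_{ij}(\unk)=0$; and the convective part telescopes via the partition of unity, $\sum_{j\in\neighborhood[i]}\F_{ij}(\unk)=\bigl(\conv'(\unk)\cdot\gradient\sum_{j}\shapef[j],\,\shapef[i]\bigr)=\bigl(\conv'(\unk)\cdot\gradient 1,\,\shapef[i]\bigr)=0$. Hence $\sum_{j\in\neighborhood[i]}\K_{ij}(\unk)=0$, and (i)--(ii) are established.

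There is no deep obstacle here: the proof is a direct verification that the hypotheses of Theorem \ref{th:properties} are met at extremal nodes. The only points needing a little care are (a) ensuring that the sole properties of the detector used are the $q$-independent ones — namely $\detector[i]=1$ at an extremum (Lemma \ref{lm:shock}) and $0\leq\detector[i]\leq 1$ in general, the latter guaranteeing $\nu_{ij}\geq 0$ and nonnegative mass weights — which is exactly what makes the result hold for all $q\in\mathbb{R}^+$; and (b) the partition-of-unity and conservation identities that produce the vanishing row sums, since these are what allow the term $\sum_{j}\K_{ij}(\unk)u_i$ to be subtracted in the LED estimate of Theorem \ref{th:properties}. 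The semi-discrete and BE-discrete cases are handled identically.
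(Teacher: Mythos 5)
Your proposal is correct and follows essentially the same route as the paper's own proof: both reduce the claim to the sufficient conditions of Theorem \ref{th:properties}, use Lemma \ref{lm:shock} to get $\detector[i](\unk)=1$ at an extremum (hence the diagonal mass entry), deduce $\K_{ij}(\unk)=\F_{ij}(\unk)-\nu_{ij}(\unk)\leq 0$ from the max in \eqref{eq:nu}, and obtain the vanishing row sum from the partition of unity together with the conservation structure of $\B$ (which the paper verifies by expanding $\nu_{ii}$ directly, while you cite the conservation property stated after \eqref{eq:stab} — the same fact). No gaps.
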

\begin{proof}
	Assume $\unk$ reaches an extremum on $i\in\nodes$. Then $\detector[i](\unk)=1$ and $\M_\ij(\unk) \der_t u_j = m_i \der_t u_i$ with $m_i = \int_\Omega \shapef[i]$. On the other hand, taking into account the definition of $\nu_{ij}(\unk)$ in \eqref{eq:nu}, the convective term for $j\neq i$ reads
	\begin{align}
		\K_{ij} (\unk) = \F_{ij}(\unk) - \max\left\{\F_{ij}(\unk), \detector[j](\unk) \F_{ji}(\unk) ,0\right\} \leq 0.
	\end{align}
	Using the fact that $\sum_{j\in\neighborhood[i]}\F_{ij}(\unk)= (\Conv{\unk}{1},\shapef[i]) = 0$, the definition of $\nu_{ii}(\unk)$, and \eqref{eq:stab}, we have
	\begin{align}
		\K_{ii} (\unk) &= \F_{ii}(\unk) + \sum_{j\in\neighborhood[i]\backslash\{i\}} \max\left\{\F_{ij}(\unk), \detector[j](\unk) \F_{ji}(\unk) ,0\right\} \\ & = \sum_{j\in\neighborhood[i]\backslash\{i\}} - \F_{ij}(\unk) + \sum_{j\in\neighborhood[i]\backslash\{i\}} \max\left\{\F_{ij}(\unk), \detector[j](\unk) \F_{ji}(\unk) ,0\right\} \\ & =  - \sum_{j\in\neighborhood[i]\backslash\{i\}} \K_{ij}(\unk).
	\end{align}
	Therefore it is clear that the conditions stated in Theorem \ref{th:properties} hold, thus the method is LED. The discrete case is proved analogously.
\end{proof}
\begin{corollary}[DMP]
	The discrete problem \eqref{eq:algebraicdis} 
	leads to solutions that satisfy the local DMP property in Def. \ref{def:lDMP} at every $t^n$, for $n = 1, \ldots, N$.
\end{corollary}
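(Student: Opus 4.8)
The plan is to obtain the corollary as a direct consequence of the monotone (LED) structure established in Theorems~\ref{th:properties} and~\ref{th:led}, by examining a single row of the Backward--Euler system \eqref{eq:algebraicdis} at the nodes that the shock detector flags. Fix a step $n\in\{0,\dots,N-1\}$, set $g=0$, and suppose the upper bound in Def.~\ref{def:lDMP} is violated at a node $i$, that is, $u_i^{n+1}>u_j^{n+1}$ for every $j\in\neighborhood[i]\setminus\{i\}$ (and, in the reading natural for implicit stepping, also $u_i^{n+1}>u_i^n$). Then $\unknp$ has a strict local maximum at $\x_i$, so Lemma~\ref{lm:shock} gives $\detector[i](\unknp)=1$; hence by \eqref{eq:asymmetricmass} the mass matrix is fully lumped in that row, $\M_{ij}(\unknp)=\delta_{ij}m_i$ with $m_i=\int_\domain\shapef[i]>0$, and, exactly as in the proof of Theorem~\ref{th:led}, $\K_{ij}(\unknp)\le 0$ for $j\neq i$ with $\K_{ii}(\unknp)=-\sum_{j\neq i}\K_{ij}(\unknp)\ge 0$.

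With these facts in hand, row $i$ of \eqref{eq:algebraicdis} rearranges to
\begin{equation*}
\Big(\frac{m_i}{\dtnp}+\K_{ii}(\unknp)\Big)\,u_i^{n+1}=\frac{m_i}{\dtnp}\,u_i^{n}+\sum_{j\in\neighborhood[i]\setminus\{i\}}\big(-\K_{ij}(\unknp)\big)\,u_j^{n+1}.
\end{equation*}
The left coefficient is strictly positive and the right-hand coefficients are nonnegative and add up to it, so $u_i^{n+1}$ is a convex combination of $u_i^{n}$ and $\{u_j^{n+1}\}_{j\neq i}$; therefore $u_i^{n+1}\le\max\{u_i^{n},\max_{j\neq i}u_j^{n+1}\}$, which contradicts the assumed violation. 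The lower bound follows verbatim with minima in place of maxima, since a strict local minimum also triggers $\detector[i]=1$; the positivity statement of Theorem~\ref{th:properties} is the particular case of this lower bound with $u_i^n\ge 0$.

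To turn the nodewise bound into the local DMP at each $t^n$ I would induct on $n$: evaluating the convex-combination identity at the node where $\max_k u_k^{n+1}$ is attained gives $\max_k u_k^{n+1}\le\max_k u_k^{n}$, and symmetrically for the minimum, so $\min_k u_k^{0}\le u_i^{n}\le\max_k u_k^{0}$ for all $i$ and all $n$; combined with the nodewise inequality this is Def.~\ref{def:lDMP} at every $t^n$ (with $u_i^n$ admitted among the comparison values for the implicit scheme; for a steady version of the scheme the previous-time value drops out, the admissible set is precisely the neighbouring values, and the remark after Def.~\ref{def:lDMP} applies).

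The one genuine obstacle is that the lumping in \eqref{eq:asymmetricmass} is \emph{conditional}: away from extrema the consistent-mass block of $\M(\unknp)$ has positive off-diagonal entries, which would break the convex-combination step. The way around it is the observation that the monotone, $M$-matrix-like row structure is only ever needed \emph{at the node whose bound is being checked}, and such a node is by construction a strict extremum, where Lemma~\ref{lm:shock} forces $\detector[i]=1$. A minor technical point is the treatment of non-strict extrema when one works with the node realising the global maximum; this is handled by a routine chain argument along the connected mesh graph (or by an infinitesimal perturbation of the data).
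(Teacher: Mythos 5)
Your proposal is correct and follows essentially the same route as the paper: at a node violating the bound the shock detector equals one, so the lumped-mass, $M$-matrix row structure from Theorems~\ref{th:properties} and~\ref{th:led} applies, and induction over time steps transfers the bound back to $t=0$ or the Dirichlet data. You merely make explicit the convex-combination identity that the paper's proof invokes implicitly through the LED property (and spells out only in the global-DMP Theorem~\ref{th:gdmp}), and you are right --- and more candid than the paper --- that the previous-time value $u_i^n$ must be admitted among the comparison values for the implicit scheme.
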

\begin{proof}
	If the maximum (resp., minimum) at time $t^n$ is on a node whose value is not on the Dirichlet boundary, it is known from the LED property in Theorem \ref{th:led} that it is bounded above (resp., below) by the maximum (resp., minimum) at the previous time step value. By induction, it will be bounded by the maximum (resp., minimum) at $t=0$. Alternatively, the maximum or minimum is on the Dirichlet boundary. It proves the result.
\end{proof} 

\begin{theorem}
	The diffusion defined in \eqref{eq:nu} is the one that introduces the minimum amount of numerical dissipation $\langle \B(\unk) \unk , \unk \rangle$ required to satisfy \eqref{eq:conditions} when $q = \infty$.
\end{theorem}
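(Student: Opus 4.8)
The plan is to argue within the natural class of admissible stabilizations, namely operators $\B(\unk)$ of the graph-Laplacian form \eqref{eq:stab} satisfying properties 1--4 and with nonnegative edge viscosities $\nu_{ij}(\unk)=\nu_{ji}(\unk)\geq 0$ (nonnegativity being implicit in the phrase \emph{numerical dissipation}). The first step is to record the identity
\[
	\langle \B(\unk)\unk,\unk\rangle \;=\; \frac12\sum_{i\in\nodes}\sum_{j\in\neighborhood[i]\backslash\{i\}} \nu_{ij}(\unk)\,(u_i-u_j)^2 ,
\]
which follows from $\ell(i,j)=2\delta_{ij}-1$, the conservation identity $\nu_{ii}(\unk)=\sum_{j\neq i}\nu_{ij}(\unk)$ (property 3), and symmetry $\nu_{ij}=\nu_{ji}$. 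This exhibits the dissipation as a sum over edges of the fixed nonnegative weights $(u_i-u_j)^2$ times the viscosities, hence for a given $\unk$ it is nondecreasing in each $\nu_{ij}(\unk)$; minimizing it therefore reduces to minimizing each edge viscosity subject to \eqref{eq:conditions}.

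The second step is to translate \eqref{eq:conditions} into edge-wise lower bounds. The mass-matrix condition in \eqref{eq:conditions} constrains $\M(\unk)$, not $\B(\unk)$, and the row-sum condition $\sum_j \K_{ij}(\unk)=0$ holds automatically since $\sum_j\F_{ij}(\unk)=(\Conv{\unk}{1},\shapef[i])=0$ and $\sum_j\B_{ij}(\unk)=0$ by property 3. Thus the only binding requirement is $\K_{ij}(\unk)=\F_{ij}(\unk)-\nu_{ij}(\unk)\leq 0$ for $i\neq j$, imposed precisely at the nodes $i$ where $\unk$ has a local extremum, i.e., by Lemma \ref{lm:shock} with $q=\infty$, exactly where $\detector[i](\unk)=1$. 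Hence $\nu_{ij}(\unk)\geq\F_{ij}(\unk)$ whenever $\detector[i](\unk)=1$; by symmetry $\nu_{ij}(\unk)=\nu_{ji}(\unk)\geq\F_{ji}(\unk)$ whenever $\detector[j](\unk)=1$; and $\nu_{ij}(\unk)\geq 0$ always. Since $\detector[i](\unk)\in\{0,1\}$ for $q=\infty$, these three inequalities are equivalent to the single bound
\[
	\nu_{ij}(\unk)\;\geq\;\max\left\{\detector[i](\unk)\F_{ij}(\unk),\ \detector[j](\unk)\F_{ji}(\unk),\ 0\right\},
\]
because a factor $\detector[i](\unk)=0$ kills the corresponding term, which is then absorbed by the $0$, whereas $\detector[i](\unk)=1$ recovers $\F_{ij}(\unk)$.

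The third step closes the argument: the right-hand side above is exactly $\nu_{ij}(\unk)$ as defined in \eqref{eq:nu}, and the associated operator $\B(\unk)$ is admissible --- it satisfies properties 1--4 and, by Theorem \ref{th:led}, the conditions \eqref{eq:conditions} at every extremum. Therefore \eqref{eq:nu} attains, edge by edge and for every $\unk\in\fespace$, the smallest viscosity compatible with \eqref{eq:conditions}, and by the identity of the first step it minimizes $\langle\B(\unk)\unk,\unk\rangle$ over the admissible class simultaneously for all $\unk$.

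The main obstacle is conceptual rather than computational. The conditions \eqref{eq:conditions} are enforced only at the nodes the detector flags, while $\nu_{ij}(\cdot)$ must be one fixed function of the FE state; one must argue that demanding \eqref{eq:conditions} for \emph{every} state forces, edge by edge and state by state, precisely the pointwise lower bound above, and that these decoupled constraints can be met with equality everywhere with no global coupling that would require extra diffusion on other edges --- which is exactly what the graph-Laplacian structure and the $\max$ in \eqref{eq:nu} provide. This is also where $q=\infty$ is essential: it switches the detector off away from extrema, so \eqref{eq:nu} never activates diffusion beyond the lower bound; for finite $q$ the same bounds still hold at extrema, but \eqref{eq:nu} would in general add extra (nonnegative) diffusion at non-extremal nodes and would no longer be minimal.
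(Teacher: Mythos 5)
Your proposal is correct and follows essentially the same route as the paper: both reduce the dissipation to the edge-wise sum $\sum \nu_{ij}(u_i-u_j)^2$ and observe that the constraint $\K_{ij}(\unk)\leq 0$ at extrema, together with nonnegativity and the vanishing of the detector at non-extrema when $q=\infty$, forces exactly the lower bound $\nu_{ij}\geq\max\{\detector[i]\F_{ij},\detector[j]\F_{ji},0\}$, which \eqref{eq:nu} attains with equality. Your packaging of the paper's case analysis into a single edge-wise lower bound (and your explicit statement of the admissible class with $\nu_{ij}=\nu_{ji}\geq 0$) is a slightly cleaner presentation of the identical argument.
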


\begin{proof}
	Using the definition of the graph-Laplacian, the amount of dissipation introduced by the nonlinear stabilization is
	$$
	\langle \B(\unk) \unk , \unk \rangle = \sum_{i \in \nodes} \sum_{j \in \neighborhood } \nu_\ij(\unk) (u_i - u_j)^2.
	$$ Let us consider two connected nodes, i.e., $i, j \in \nodes$ and $j
	\in \neighborhood[i]$. If neither $i$ nor $j$ are extrema, then
	$\detector[i](\unk) = \detector[j](\unk) = 0$ and $\nu_\ij = 0$. Let
	us assume (without loss of generality) that $\unk$ has an extremum at
	$i$.  If $u_i = u_j$, the dissipation is independent of the expression
	for $\nu_{\ij}$.  If $u_i > u_j$, $\shock[j] = 0$ (since $q =
	\infty$). Thus, $\nu_\ij = -\max\{\F_\ij(\unk),0\}$. If $\F_\ij(\unk)
	\leq 0$, no dissipation is introduced. If $\F_\ij(\unk) > 0$, then the
	diffusion introduced by the method is $-\F_\ij(\unk)$ and
	$\K_\ij(\unk) = 0$.
	
	Let us assume that we have a method that is less dissipative than the one proposed herein. Based on the previous analysis, there exists a pair of connected nodes such that $u_i > u_j$ and the dissipation introduced is smaller than $- \F_{ij}(\unk)$, for $\F_\ij(\unk) > 0$. As a result, $\K_\ij(\unk) > 0$. Thus, the properties in \eqref{th:properties} do not hold. It proves the theorem.
\end{proof}

Furthermore, it can be proved that the above method \eqref{eq:algebraic} (also \eqref{eq:algebraicdis}) is linearly preserving. In addition, using \eqref{eq:alpha_lp_aprx} instead, the method is still linearly preserving for symmetric meshes.
\begin{theorem}[Linearity preservation]\label{th:lp}
	Let $\unk$ be a continuous first order FE approximation of $u\in P_1(\domain)$, then the semi-discrete and discrete problems \eqref{eq:algebraic} and \eqref{eq:algebraicdis}, respectively, are linearity preserving, in the sense that the Galerkin problem and the stabilized one are identical.
\end{theorem}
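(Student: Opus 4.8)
The plan is to reduce everything to the single fact that the shock detector $\detector[i]$ vanishes at every node when $\unk$ represents a globally affine function; once that is established, both the mass perturbation \eqref{eq:asymmetricmass} and the artificial diffusion \eqref{eq:nu} collapse to their Galerkin counterparts and the claim follows by direct substitution.

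First I would note that, since $u\in P_1(\domain)$ and an affine function belongs to $\fespace$ (to the piecewise $P_1$ space on simplices, and, since $a+\boldsymbol{b}\cdot\x$ composed with a bilinear map is again bilinear, to the piecewise $Q_1$ space on quadrilaterals/hexahedra), any reasonable FE approximation reproduces $u$ exactly; hence $\unk=u=a+\boldsymbol{b}\cdot\x$ for some $a\in\mathbb{R}$ and $\boldsymbol{b}\in\mathbb{R}^d$, and $\gradient\unk\equiv\boldsymbol{b}$ is constant on all of $\domain$. Next I would evaluate the directional jump \eqref{eq:jump} at an arbitrary node $i\in\nodes$. By construction $\x_{ij}^\sym$ lies on the line through $\x_i$ and $\x_j$ on the side of $\x_i$ opposite to $\x_j$, so $\hat{\rr}_{ij}^\sym=-\hat{\rr}_{ij}$. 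Since $\gradient\unk$ is globally constant we have $u_j-u_i=\boldsymbol{b}\cdot\rr_{ij}$ and $u_j^\sym-u_i=\unk(\x_{ij}^\sym)-u_i=\boldsymbol{b}\cdot\rr_{ij}^\sym$ on any mesh, whence
$$
\jump{\gradient\unk}_{ij}=\frac{\boldsymbol{b}\cdot\rr_{ij}}{|\rr_{ij}|}+\frac{\boldsymbol{b}\cdot\rr_{ij}^\sym}{|\rr_{ij}^\sym|}=\boldsymbol{b}\cdot\hat{\rr}_{ij}+\boldsymbol{b}\cdot\hat{\rr}_{ij}^\sym=0
$$
for every $j\in\neighborhood[i]$. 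Therefore the numerator $\bigl|\sum_{j\in\neighborhood[i]}\jump{\gradient\unk}_{ij}\bigr|$ in \eqref{eq:alpha_lp} vanishes, and regardless of whether the denominator is zero, both cases of the definition \eqref{eq:alpha_lp} give $\detector[i](\unk)=0$, for every node and every $q\in\mathbb{R}^+$.

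Finally I would substitute $\detector[i](\unk)=0$ for all $i$ into the scheme: \eqref{eq:asymmetricmass} becomes $\M_{ij}(\unk)=(\shapef[j],\shapef[i])$, i.e. the consistent Galerkin mass, so no lumping occurs; and \eqref{eq:nu} gives $\nu_{ij}(\unk)=\max\{0,0,0\}=0$ for $i\neq j$, hence also $\nu_{ii}(\unk)=0$ and $\B(\unk)=0$, so $\K(\unk)=\F(\unk)$. Plugging this into \eqref{eq:discrete}, the first two terms reduce to $(\partial_t\unk,\shapef[i])$ and the stabilization sum vanishes, so \eqref{eq:algebraic} and \eqref{eq:algebraicdis} become exactly the Galerkin problems \eqref{eq:semid} and \eqref{eq:algdis}. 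The only nonroutine point is the geometric identity $\hat{\rr}_{ij}^\sym=-\hat{\rr}_{ij}$ together with the remark that $u_j^\sym-u_i=\gradient\unk(\x_i)\cdot\rr_{ij}^\sym$ stays valid at boundary nodes whose macroelement is not symmetric — which holds precisely because $\gradient\unk$ is globally constant for affine $\unk$; I expect this to be the main, and rather mild, obstacle, with the rest being mechanical substitution.
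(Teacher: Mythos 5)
Your proof is correct and takes essentially the same route as the paper's: the constant gradient of an affine $\unk$ makes every directional jump $\jump{\gradient\unk}_{ij}$ vanish, hence $\detector[i](\unk)=0$ at every node, so the artificial diffusion \eqref{eq:nu} and the gradual lumping in \eqref{eq:asymmetricmass} both disappear and the Galerkin scheme is recovered. The only difference is that you spell out the geometric identity $\hat{\rr}_{ij}^{\sym}=-\hat{\rr}_{ij}$ behind the cancellation, which the paper leaves implicit.
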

\begin{proof}
	If $\unk\in P_1(\domain)$, then it is obvious that $\gradient\unk$ is constant. Thus, $\jump{\gradient \unk}_{ij}=0 $ for any direction $\rr_{ij}$, and $\detector[i](\unk)=0$ for any $i \in \nodes$. Therefore, recalling \eqref{eq:nu}, it is easy to see that $\nu_{ij}=0$ for any $i,j \in \nodes$. Thus, the nonlinear stabilization and gradual lumping terms vanish and the Galerkin scheme is recovered.
\end{proof}

\section{Symmetric mass matrix stabilization}\label{sec:l2stab}

The nonlinear mass matrix that has been considered in  \eqref{eq:asymmetricmass} is nonsymmetric by construction. In any case, we can easily consider a symmetric version of the method.

Another alternative strategy to the nonlinear mass matrix definition in \eqref{eq:asymmetricmass} is to consider the fully discrete problem \eqref{eq:algebraicdis}, keeping the mass matrix at the current time step as a reaction term, leading to the following expression of the artificial diffusion
\begin{equation}\label{eq:nuConsMass}
	\arraycolsep=1.4pt\def\arraystretch{2}
	\begin{array}{ll}
		\tilde{\nu}_{ij}(w_h)\doteq\nu_{ij}(w_h) + \frac{1}{\Delta t}\max \left\{\detector[i]\M_\ij,0, \detector[j]\M_\ji\right\} \qquad\text{for } i\neq j, \\
		\tilde{\nu}_{ii}(w_h) \doteq \displaystyle\sum_{\substack{j\in\neighborhood[i]\\ j\neq i}} \tilde{\nu}_{ij}.
	\end{array}
\end{equation}

Let us consider another notion of DMP property. 
\begin{definition}[Global DMP]\label{def:gDMP}
	A solution satisfies the global DMP if given $(\x,t)$ in $\domain\times (0,T]$
	\begin{equation}
		\min_{(\boldsymbol{y},\overline{t})\in\Gamma} u(\boldsymbol{y},\overline{t})  \leq u(\x,t) \leq \max_{(\boldsymbol{y},\overline{t})\in\Gamma} u(\boldsymbol{y},\overline{t}) 
	\end{equation}
	where $\Gamma \doteq \Omega \times \{0\} \cup \inflowboundary$. \\
\end{definition}  
It is easy to check that the global DMP is a consequence of the local DMP and LED properties.

It is possible to prove that the modified method with BE time integration satisfies the global DMP in Def. \ref{def:gDMP}. Linear preservation can also be easily checked.
\begin{theorem}[Global DMP]\label{th:gdmp}
	Let $\unk$ be a continuous first order FE approximation of $u$. Then, the BE time discretization of problem \eqref{eq:disc_prob} with $g= 0$, stabilized with \eqref{eq:stab}, and using \eqref{eq:nuConsMass} as artificial diffusion, satisfies the global DMP property in Def. \eqref{def:gDMP} for any $q\in\mathbb{R}^+$.
\end{theorem}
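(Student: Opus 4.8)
The plan is to reduce the statement to a one‑line maximum–principle estimate on the nodal values at each time step, obtained by rearranging the $i$‑th equation of \eqref{eq:algebraicdis} — with the consistent mass matrix kept and the augmented diffusion \eqref{eq:nuConsMass} in place — at a node where the discrete solution attains an extremum, and then to close the argument by induction on $n$.

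\emph{Step 1 (localization and rearrangement).} First I would test the BE discretization of \eqref{eq:disc_prob} stabilized with \eqref{eq:stab} and diffusion \eqref{eq:nuConsMass} against $\shapef[i]$. Using that the stabilization entries built from \eqref{eq:nuConsMass} are $-\tilde{\nu}_{ij}$ for $j\neq i$ and $\sum_{j\neq i}\tilde{\nu}_{ij}$ on the diagonal, the equation at node $i$ reads
\[
\frac{1}{\dtnp}\sum_{j\in\neighborhood[i]}\M_\ij(u_j^{n+1}-u_j^n)+\sum_{j\in\neighborhood[i]}\F_\ij(\unknp)u_j^{n+1}+\sum_{\substack{j\in\neighborhood[i]\\ j\neq i}}\tilde{\nu}_{ij}(\unknp)\,(u_i^{n+1}-u_j^{n+1})=0 .
\]
Subtracting the identities $u_i^{n+1}\sum_{j}\F_\ij(\unknp)=0$ (partition of unity, exactly as in the proof of Theorem \ref{th:led}) and $\tfrac{1}{\dtnp}u_i^{n+1}\big(\sum_j\M_\ij-m_i\big)=0$ with $m_i=\int_\domain\shapef[i]$, I arrive at
\[
\frac{m_i}{\dtnp}\,u_i^{n+1}+\sum_{\substack{j\in\neighborhood[i]\\ j\neq i}}c_\ij(\unknp)\,(u_i^{n+1}-u_j^{n+1})=\frac{1}{\dtnp}\sum_{j\in\neighborhood[i]}\M_\ij\,u_j^n,\qquad c_\ij:=\tilde{\nu}_{ij}(\unknp)-\F_\ij(\unknp)-\frac{\M_\ij}{\dtnp}.
\]

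\emph{Step 2 (sign of the coefficients at extrema).} The crux is to show $c_\ij\ge 0$ whenever $u_i^{n+1}$ is a local extremum. There, Lemma \ref{lm:shock} gives $\detector[i](\unknp)=1$ for every $q\in\mathbb{R}^+$, so $\nu_{ij}(\unknp)\ge\detector[i](\unknp)\F_\ij(\unknp)=\F_\ij(\unknp)$; moreover, once $\detector[i]=1$ the quantity $\M_\ij$ is literally one of the arguments of the maximum in \eqref{eq:nuConsMass}, whence $\tfrac{1}{\dtnp}\max\{\detector[i]\M_\ij,0,\detector[j]\M_\ji\}\ge\M_\ij/\dtnp$. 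Combining, $\tilde{\nu}_{ij}(\unknp)\ge\F_\ij(\unknp)+\M_\ij/\dtnp$, i.e. $c_\ij\ge\nu_{ij}(\unknp)-\F_\ij(\unknp)\ge 0$. This is precisely the purpose of the $\dtnp^{-1}$ correction in \eqref{eq:nuConsMass}: it absorbs the off‑diagonal consistent‑mass coupling so that the extremal row is of "implicit M‑matrix" type \emph{without} any mass lumping, which is why the hypotheses of Theorem \ref{th:properties} (which demand a lumped mass) need not be invoked here.

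\emph{Step 3 (maximum principle and induction).} Let $i\in\nodes$ realize $\max_{k\in\nodes}u_k^{n+1}$. If $i$ lies on $\inflowboundary$ then $u_i^{n+1}\le\max_{\inflowboundary}\uboundary$; otherwise the identity of Step 1 holds at $i$, $u_i^{n+1}$ is a local maximum, so $c_\ij(u_i^{n+1}-u_j^{n+1})\ge 0$, and using $\M_\ij\ge 0$ (valid for $P_1$ and $Q_1$ element mass matrices, hence for the assembled one) together with $\sum_{j}\M_\ij=m_i>0$,
\[
\frac{m_i}{\dtnp}\,u_i^{n+1}\le\frac{1}{\dtnp}\sum_{j\in\neighborhood[i]}\M_\ij\,u_j^n\le\frac{m_i}{\dtnp}\max_{k\in\nodes}u_k^n ,
\]
so $\max_k u_k^{n+1}\le\max\{\max_{\inflowboundary}\uboundary,\ \max_k u_k^n\}$; the mirror argument at the global minimum gives the lower bound. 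Iterating from $n$ down to $0$ and recalling that $\domain\times\{0\}\subset\Gamma$ carries the data $\unk^0$ and $\inflowboundary\subset\Gamma$, one obtains $\min_\Gamma u\le u_k^{n}\le\max_\Gamma u$ for every node and every $n$; since a continuous $P_1$ (resp. $Q_1$) function attains its extrema over $\domain$ at mesh vertices, this is exactly the global DMP of Def. \ref{def:gDMP}. The only step requiring genuine care is Step 2 — the bookkeeping that makes $c_\ij\ge 0$ and the reliance on nonnegativity of the consistent mass matrix, which confines the argument to linear/bilinear elements; Steps 1 and 3 are routine localization and induction.
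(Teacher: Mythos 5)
Your proposal is correct and follows essentially the same route as the paper: both arguments hinge on the fact that $\detector[i]=1$ at an extremum forces $\max\{\detector[i]\M_\ij,0,\detector[j]\M_\ji\}=\M_\ij$, so the $\dtnp^{-1}$ correction in \eqref{eq:nuConsMass} exactly converts the consistent-mass row into a lumped-mass row plus the original diffusion, after which $u_i^{n+1}$ is bounded by a convex combination $\sum_j(\M_\ij/m_i)u_j^n$ and the claim follows by induction. Your explicit coefficients $c_\ij=\tilde{\nu}_\ij-\F_\ij-\M_\ij/\dtnp\ge 0$ are just the unrolled form of the paper's identity $\M_\ij u_j+\tilde{\K}_\ij u_j=m_i u_i+\K_\ij u_j$ combined with $\K_\ij\le 0$ from Theorem \ref{th:led}.
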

\begin{proof}
	Let us denote by $\K(u)$ and $\tilde{\K}(u)$ the stabilized matrix with the artificial diffusion computed with \eqref{eq:nu} and \eqref{eq:nuConsMass}, respectively. Assume $\unk$ reaches a maximum on $\x_i\in\domain\backslash\inflowboundary$. Then $\detector[i]=1$, and we have:
	$$
	\M_\ij(\unk) u_j + \tilde{\K}_{\ij} (\unk) u_j = m_i u_i + {\K}_{\ij} (\unk) u_j,
	$$
	where we have used the fact that $\max \left\{\detector[i]\M_\ij,0, \detector[j]\M_\ji\right\} = \M_\ij$. Thus, the equation related to the test function  $\shapef[i]$ leads to 
	\begin{equation}
		\sum_{j\in\neighborhood[i]} \frac{\M_{ij}}{m_i} u_j^{n} = u_i^{n+1}+\sum_{j\in\neighborhood[i]}\frac{\K_{ij} (\unk)}{m_i}  u_j^{n+1} \geq 
		u_i^{n+1}+\sum_{j\in\neighborhood[i]}\frac{\K_{ij} (\unk)}{m_i} u_i^{n+1} = u_i^{n+1}.
	\end{equation}
	Note that $\frac{\M_{ij}}{m_i}>0$, and $\sum_{j\in\neighborhood[i]} \frac{\M_{ij}}{m_i} = 1$. Hence $u_i^{n+1}$ is smaller or equal to a convex combination of $u_j^n$, for $j\in\neighborhood[i]$, and thus it is bounded above by the largest of these values. As a result, $u_h^{n+1}(\x) \leq \max \{ \max_{\y \in \Omega} u_h^n(\y), \max_{(\y,t^{n+1}) \in \Gamma_{\rm in}} u_D(\y,t^{n+1})  \}$. Using a recursion argument, we prove the upper bound. We proceed analogously for the case lower bound. It proves the theorem.
\end{proof}

\section{Lipschitz continuity}\label{sec:lipschitz}

In the next, we want to prove the Lipschitz continuity of the nonlinear operator at every time step, i.e., $\T: \fespace \rightarrow \fespace'$ defined as 
$$
\T(\unk) \doteq  \dtnp^{-1} \M(\unk) \unk + \K(\unk)\unk - g - \dtnp^{-1} \M(\unk) \unkn.
$$
In order to prove the Lipschitz continuity of $\T(\cdot)$, we must deal with the nonlinear stabilization and gradual mass lumping terms. The Galerkin terms can be handled using the fact that $\conv\in\text{Lip}(\mathbb{R};\mathbb{R}^d)$.

Let us introduce the following semi-norm generated by the graph-Laplacian operator 
$$| w |_\ell \doteq \sqrt{ \half \sum_{{i \in \nodes }}
	\sum_{{j\in\neighborhood[i]}} (w_i - w_j)^2. }$$  
Further, we define
$|\beta|$ as the supremum of $ |\conv(v) |$ for $v \in \fespace^{\rm adm}$, where 
$\fespace^{\rm adm} \subset \fespace$ is the subspace of functions that satisfy the global DMP in Def. \ref{def:gDMP}. 

\begin{theorem}\label{th:lipschitz}
	Let us consider a non-degenerate partition $\mathcal{T}_h$. Given $\unkn \in \fespace$ and $g \in \fespace'$, the nonlinear operators $\B(\cdot): \fespace \rightarrow \fespace'$ and $\M(\cdot): \fespace \rightarrow \fespace'$ are Lipschitz continuous in $\fespace^{\rm adm}$ for $q \in \mathbb{N}^+$, since they satisfy
	$$
	\langle \B(u) - \B(v) , z \rangle  \leq q h^{d-1} | \beta | |u-v|_\ell |z|_\ell
	, \qquad \hbox{for any} \ z \in \fespace,
	$$
	$$
	\langle \M(u) - \M(v) , z \rangle  \leq C (q h^{\frac{d}{2}} |u-v|_\ell + \| u -v \|)\| z \|, \qquad \hbox{for any} \ z \in \fespace.
	$$
\end{theorem}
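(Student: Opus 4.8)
The plan is to reduce both estimates to sums over mesh edges and then isolate the one genuinely delicate ingredient, the Lipschitz continuity of the shock detector. First I would put the two operators in edge form. Using symmetry of $\nu_{ij}$, the conservation property $\nu_{ii}(w)=\sum_{j\neq i}\nu_{ij}(w)$, and $\ell(i,j)=2\delta_{ij}-1$, one gets
$$\langle \B(w)u,z\rangle = \tfrac12\sum_{i\in\nodes}\sum_{j\in\neighborhood[i]\backslash\{i\}}\nu_{ij}(w)\,(u_i-u_j)(z_i-z_j).$$
Expanding the nonlinear mass matrix \eqref{eq:asymmetricmass} and using $\sum_{j}(\shapef[j],\shapef[i])=(1,\shapef[i])=:m_i$ gives
$$\langle \M(w)u,z\rangle = (u_h,z_h) + \sum_{i\in\nodes}\detector[i](w)\,z_i\sum_{j\in\neighborhood[i]\backslash\{i\}}(\shapef[j],\shapef[i])(u_i-u_j),$$
so the lumping correction is again an edge-difference operator. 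I would then split $\B(u)u-\B(v)v=\B(u)(u-v)+(\B(u)-\B(v))v$ (and likewise for $\M$), so that every detector or viscosity difference appears multiplied by an edge increment of the \emph{fixed} argument $v$, while the remaining pieces $\B(u)(u-v)$ and $(u_h-v_h,z_h)$ carry an edge increment of $u-v$.

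The routine ingredients are the non-degeneracy scalings $|(\shapef[j],\shapef[i])|\le Ch^{d}$, $|\F_{ij}(w)|\le Ch^{d-1}|\beta|$ and hence $|\nu_{ij}(w)|\le Ch^{d-1}|\beta|$ (using $\detector[i]\in[0,1]$ and that $\max\{\cdot,\cdot,0\}$ is monotone), together with $\|z_h\|^2\simeq h^{d}\sum_i z_i^2$, the lower bounds $|\rr_{ij}|,|\rr_{ij}^{\sym}|\ge ch$, and a uniform bound on $\#\neighborhood[i]$. They immediately dispose of the easy halves: $\langle\B(u)(u-v),z\rangle\le \max_{ij}|\nu_{ij}(u)|\,|u-v|_\ell|z|_\ell\le Ch^{d-1}|\beta|\,|u-v|_\ell|z|_\ell$ by Cauchy–Schwarz in the edge sum, $(u_h-v_h,z_h)\le\|u-v\|\,\|z\|$, and the lumping piece $\sum_i\detector[i](u)z_i\sum_j(\shapef[j],\shapef[i])((u-v)_i-(u-v)_j)$ is bounded by $Ch^{d/2}|u-v|_\ell\,\|z\|$ after Cauchy–Schwarz and the $\ell^2$–$L^2$ equivalence.

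The crux is the Lipschitz continuity of $\detector[i](\cdot)$. Write $\detector[i](w)^{1/q}=N_i(w)/D_i(w)$ (and $0$ if $D_i(w)=0$), with $N_i(w)=\big|\sum_{j}\jump{\gradient w}_{ij}\big|$ and $D_i(w)=\sum_{j}2\mean{|\gradient w\cdot\hat{\rr}_{ij}|}_{ij}$; both are functions of the nodal values of $w$ (the symmetric values $w_j^{\sym}$ being fixed convex combinations of nodal values on a non-degenerate mesh), $D_i$ is a seminorm in those values, and $N_i(w)\le D_i(w)$ for every $w$ by the triangle inequality underlying Lemma \ref{lm:shock}. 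Hence $|D_i(u)-D_i(v)|\le D_i(u-v)$ and $|N_i(u)-N_i(v)|\le N_i(u-v)\le D_i(u-v)$; inserting these into
$$\frac{N_i(u)}{D_i(u)}-\frac{N_i(v)}{D_i(v)}=\frac{N_i(u)}{D_i(u)}\cdot\frac{D_i(v)-D_i(u)}{D_i(v)}+\frac{N_i(u)-N_i(v)}{D_i(v)}$$
and using $N_i(u)/D_i(u)\le1$ gives $|\detector[i](u)^{1/q}-\detector[i](v)^{1/q}|\le 2D_i(u-v)/\max(D_i(u),D_i(v))$; since $t\mapsto t^q$ is $q$-Lipschitz on $[0,1]$ for $q\ge1$, $|\detector[i](u)-\detector[i](v)|\le 2q\,D_i(u-v)/\max(D_i(u),D_i(v))$, which is where the factor $q$ enters. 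The point is that the dangerous $D_i$ in the denominator is harmless: wherever $\detector[i](u)-\detector[i](v)$ is used it multiplies an increment $w_i-w_j$ (with $w=v$ or $w=u$), and $|w_i-w_j|\le|\rr_{ij}|\,D_i(w)\le Ch\,D_i(w)$ because the term $|w_j-w_i|/|\rr_{ij}|$ sits inside $D_i(w)$; the $D_i$'s cancel, the product is $0$ whenever the denominator vanishes, and no lower bound on the detector's denominator is ever needed.

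It then remains to assemble. Telescoping $\nu_{ij}(u)-\nu_{ij}(v)$ (using again that $\max\{\cdot,\cdot,0\}$ is $1$-Lipschitz) into a detector-difference part and an $\F$-difference part, the detector part contributes, per edge, $\le Cq h^{d}|\beta|\,D_i(u-v)$ (using $|\F_{ij}(u)|\le Ch^{d-1}|\beta|$ and the cancellation above), and summing over edges with $D_i(u-v)\le Ch^{-1}\big(\sum_{j}((u-v)_i-(u-v)_j)^2\big)^{1/2}$ and Cauchy–Schwarz yields $Cq h^{d-1}|\beta|\,|u-v|_\ell|z|_\ell$; the $\F$-difference part $\detector[i](v)(\F_{ij}(u)-\F_{ij}(v))(v_i-v_j)$ vanishes for an affine flux and is otherwise controlled through $\conv\in\text{Lip}$. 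The mass estimate is finished identically with $(\shapef[j],\shapef[i])$ in place of $\F_{ij}(\cdot)$: $|(\shapef[j],\shapef[i])|\le Ch^{d}$ and $|u_i-u_j|\le Ch\,D_i(u)$ give, per edge, $\le Cq h^{d+1}D_i(u-v)$, and after Cauchy–Schwarz together with $\sum_i z_i^2\le Ch^{-d}\|z\|^2$ this becomes $Cq h^{d/2}|u-v|_\ell\,\|z\|$. I expect the main obstacle to be precisely the detector estimate: it is a non-smooth quotient whose denominator may be arbitrarily small, so the argument must lean on the structural inequality $N_i\le D_i$ from Lemma \ref{lm:shock} and on the cancellation of $1/D_i$ against the edge increments forced by the graph-Laplacian structure (with $|x^q-y^q|\le q|x-y|$ accounting for the $q$ in the constants); a secondary nuisance is the non-degeneracy bookkeeping for the symmetric nodes $\x_{ij}^{\sym}$, which is what ties the clean statement to non-degenerate meshes and to $q\ge1$.
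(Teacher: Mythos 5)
Your proof is correct and follows essentially the same route as the paper's: the same splitting of $\B(u)u-\B(v)v$ into a frozen-viscosity term plus a viscosity-difference term, the same scaling bounds $\F_{ij}(w)\lesssim h^{d-1}|\beta|$, the same key detector estimate obtained from a quotient-difference inequality combined with $|x^q-y^q|\le q|x-y|$ on $[0,1]$, and the same cancellation of the detector's denominator against the edge increments $|w_i-w_j|\lesssim h\,D_i(w)$. The only cosmetic differences are that you derive the quotient bound from the seminorm structure $N_i\le D_i$ rather than quoting the two-sequence inequality of Barrenechea et al.\ used in the appendix, and that you are slightly more explicit than the paper about the residual $\F_{ij}(u)-\F_{ij}(v)$ contribution for genuinely nonlinear fluxes.
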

\begin{proof}
	The proof of the theorem is included in Appendix \ref{proof_continuity}.
\end{proof}

\section{Differentiable stabilization}\label{sec:dif_stab}
The previous nonlinear system is Lipschitz continuous, which improves
the convergence of the nonlinear iterations. In fact, assuming that we
supplement \eqref{eq:cont_prob} with a diffusive term, 
existence and uniqueness can be proved in the diffusive regime (see
\cite{barrenechea_2016}). However, even using Anderson acceleration nonlinear convergence can be very hard 
(see \cite{dmitri_kuzmin_new_2015,kuzmin_gradient-based_2016} and
Sect. \ref{sec:num_exp}). 

Based on these observations, we want to develop methods that lead to at least twice differentiable operators, i.e., $\frac{\partial^2 \T(\unk)}{\partial^2 \unk}\in
\mathcal{C}^0$, using the previous framework. This allows the usage of the Newton method to
linearize the system, and reduces the required number of nonlinear
iterations. Smoothness is achieved by substituting the
non-differentiable functions of the previous formulation with smooth
approximations. 

In order to end up with a twice differentiable method, we propose to use the following artificial diffusion:
\begin{equation}\label{eq:soft_nu}
	\arraycolsep=1.4pt\def\arraystretch{1.6}
	\begin{array}{rl}
		\nu_{ij}\doteq& \smax \left\{\smax \left\{\smthdetector[i](\F_\ij(w_h)), \smthdetector[j]\F_\ji(w_h)\right\},0\right\}, \quad\text{for } i\neq j,\\ \nu_{ii}\doteq& \displaystyle\sum_{\substack{j\in\neighborhood[i]\\ j\neq i}} \nu_{ij}.
	\end{array}
\end{equation}
The function $\smax(\cdot)$ is a regularized maximum function
\begin{equation}\label{eq:smax}
	\smax\{x,y\} \doteq \frac{ \absn[\sigma]{x -y}}{2} +\frac{x +y}{2},
\end{equation}
where $\absn[\sigma]{x}\doteq\sqrt{x^2+\sigma}$ is a smooth approximation of the absolute value. In order to keep dimensional consistency, $\sigma$ should be a small parameter of order $\mathcal{O}\left(|\beta|^2 \ell^{2(d-1)}\right)$, where $\ell$ is a characteristic length of the problem.
Let us define the smooth limiter function $f(x) \in \mathcal{C}^2$ that will be used in the definition of $\smthdetector$,
\begin{equation}\label{eq:alpha_limit}
	f(x)\doteq \left\{\begin{array}{clc}
		2x^4 - 5x^3 + 3x^2 + x & \text{if} & x<1 \\
		1 & \text{if} & x\geq 1 \end{array}\right. .
\end{equation}
This function is used to smoothly limit the value of $x$ up to 1. Further, let us define another smooth approximation of the absolute value, namely 
$$\absd[\varepsilon]{x}\doteq \frac{x^2}{\sqrt{x^2 + \varepsilon}}.$$ Finally, the shock detector is defined as
\begin{equation}\label{eq:alpha_smooth}
	\smthdetector[i](u_h) \doteq
	\left[f\left(\frac{\absn{\sum_{j\in\neighborhood[i]} \jump{\gradient \unk}_{ij}} + \gamma}{\sum_{j\in\neighborhood[i]}2\mean{\absd{\gradient \unk\cdot \hat{\rr}_{ij}}}_{ij} + \gamma}\right)\right]^q ,
\end{equation}
where $\gamma$ is a small parameter that prevents division by zero.

It has been proved in Lemma \ref{lm:shock} that $\detector[i]$ equals 1 when $i$ is an extremum in $\domain_i$. Let us prove that this is still true for $\smthdetector[i]$.
\begin{lemma}\label{lem:smth_alpha} If $\unk$ has an extremum on $i\in\nodes$ then $\smthdetector[i](\unk) = 1$.
\end{lemma}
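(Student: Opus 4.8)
The plan is to mimic the argument of Lemma~\ref{lm:shock}, tracking the effect of the three regularizations ($\gamma$ in numerator and denominator, $\absd{\cdot}$ replacing $|\cdot|$, and $f(\cdot)$ replacing the power-of-the-ratio clipping) and showing they all become inert at an extremum. First I would observe that if $\unk$ has an extremum at $\x_i$, then for every $j\in\neighborhood[i]$ the quantities $u_j-u_i$ and $u_j^\sym-u_i$ all have the same sign (or vanish). Hence, exactly as in the proof of Lemma~\ref{lm:shock},
\begin{equation*}
\left|\sum_{j\in\neighborhood[i]} \jump{\gradient \unk}_{ij}\right|
= \sum_{j\in\neighborhood[i]}\left(\frac{|u_j-u_i|}{|\rr_\ij|} + \frac{|u_j^\sym-u_i|}{|\rr_\ij^\sym|}\right).
\end{equation*}
The first key step is therefore that the \emph{numerator without the $\gamma$}, namely $\absn{\sum_j \jump{\gradient\unk}_{ij}}$, equals $\sum_j 2\mean{|\gradient\unk\cdot\hat\rr_{ij}|}_{ij}$, which here must be compared with the denominator expression $\sum_j 2\mean{\absd{\gradient\unk\cdot\hat\rr_{ij}}}_{ij}$.

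The second step handles the mismatch between $|\cdot|$ in the numerator's mean and $\absd{\cdot}$ in the denominator's mean. Since $\absd[\varepsilon]{x}=x^2/\sqrt{x^2+\varepsilon}\le |x|$ for all $x$ and all $\varepsilon\ge 0$, the denominator sum is bounded above by the numerator sum; combined with the identity from the first step, this gives
\begin{equation*}
\absn{\sum_{j\in\neighborhood[i]} \jump{\gradient \unk}_{ij}}
\;\ge\; \sum_{j\in\neighborhood[i]} 2\mean{\absd{\gradient \unk\cdot \hat{\rr}_{ij}}}_{ij}.
\end{equation*}
Adding the same $\gamma>0$ to both sides preserves this inequality, so the argument of $f$ in~\eqref{eq:alpha_smooth} is $\ge 1$. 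The third step then invokes the definition~\eqref{eq:alpha_limit}: $f(x)=1$ for $x\ge 1$, so $f(\cdot)=1$ and therefore $\smthdetector[i](\unk)=1^q=1$. Two minor wrinkles to address: one must note $\absn[\sigma]{\cdot}$ in~\eqref{eq:alpha_smooth} is actually $\absn[\varepsilon]{\cdot}$ from the macro $\absn$ (the smooth absolute value $\sqrt{(\cdot)^2+\varepsilon}$), which satisfies $\absn{x}\ge |x|$, so replacing $|\sum_j\jump{\gradient\unk}_{ij}|$ by $\absn{\sum_j\jump{\gradient\unk}_{ij}}$ only makes the numerator larger and the inequality is unaffected; and one should confirm the denominator is strictly positive because of the $+\gamma$, so the ratio is well defined and the "otherwise" branch of the non-smooth detector has no smooth analogue to worry about.

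The main obstacle is bookkeeping rather than conceptual: one must be careful that every regularization ($\gamma$, $\varepsilon$ inside $\absd$, $\varepsilon$ inside $\absn$, and the clipping via $f$) acts \emph{in the direction that drives the ratio up to or past $1$} at an extremum. Concretely, the numerator is inflated (smooth absolute value $\ge$ true absolute value) and the denominator is deflated ($\absd{\cdot}\le|\cdot|$), so after the first-step identity the ratio is $\ge 1$ before adding $\gamma$, and adding equal $\gamma$ to a ratio already $\ge 1$ keeps it $\ge 1$. Once that chain of inequalities is assembled, the flat region $x\ge1$ of $f$ closes the argument immediately.
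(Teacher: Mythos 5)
Your proof is correct and follows essentially the same route as the paper's: establish the same-sign identity at an extremum, use $\absn{x}\ge|x|\ge\absd{x}$ to show the argument of $f$ (with $\gamma$ added to both numerator and denominator) is at least $1$, and conclude via the flat region of $f$. The extra remarks on well-definedness of the ratio and on the direction of each regularization are sound but do not change the argument.
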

\begin{proof}
	It is clear that $f(x)$ equals 1 for $x\geq 1$, then the proof reduces to check that
	\begin{equation}
		\absn{\sum_{j\in\neighborhood[i]} \jump{\gradient \unk}_{ij}} + \gamma \geq \sum_{j\in\neighborhood[i]}2\mean{\absd{\gradient \unk\cdot \hat{\rr}_{ij}}}_{ij} + \gamma .
	\end{equation}
	Taking into account that
	\begin{equation}
		\sqrt{x^2 + \varepsilon} = \absn{x} > |x| \geq \absd{x} = \frac{x^2}{\sqrt{x^2 + \varepsilon}},
	\end{equation}
	and the fact that $u_j-u_i$ has the same sign (or it is equal to zero) in all directions, it is easy to see that
	\begin{align}
		\absn{ \sum_{j\in\neighborhood[i]} \jump{\gradient \unk}_{ij}}  &= 
		\absn{\sum_{j\in\neighborhood[i]}
			\frac{u_j - u_i}{|\rr_\ij|} + \frac{u_j^\sym - u_i}{|\rr_\ij^\sym|}} \\
		&\geq 
		\left | \sum_{j\in\neighborhood[i]}
		\frac{u_j - u_i}{|\rr_\ij|} + \frac{u_j^\sym - u_i}{|\rr_\ij^\sym|}\right| 
		\\
		&=
		\sum_{j\in\neighborhood[i]}
		\frac{\left|u_j - u_i\right|}{|\rr_\ij|} + \frac{\left|u_j^\sym - u_i\right|}{|\rr_\ij^\sym|} \geq
		\sum_{j\in\neighborhood[i]} 2\mean{ \left| \gradient \unk \cdot \hat{\rr}_{ij} \right|} \\
		&\geq \sum_{j\in\neighborhood[i]} 2\mean{ \absd{ \gradient \unk \cdot \hat{\rr}_{ij}} }.
	\end{align}
	It proves that $\smthdetector[i](\unk) = 1$ on an extremum. In fact, if the solution does not have an extremum, these quantities neither can have the same sign nor be zero in all cases. Since 
	\begin{equation}
		\left | \sum_{j\in\neighborhood[i]} \jump{\gradient \unk}_{ij} \right|  = \lim_{\varepsilon\rightarrow 0} \absn{\sum_{j\in\neighborhood[i]} \jump{\gradient \unk}_{ij} }
	\end{equation}
	and
	\begin{equation}
		\sum_{j\in\neighborhood[i]} 2\mean{ \left| \gradient \unk \cdot \hat{\rr}_{ij} \right|} =
		\lim_{\varepsilon\rightarrow 0} \sum_{j\in\neighborhood[i]} 2\mean{ \absd{\gradient \unk \cdot \hat{\rr}_{ij}}},
	\end{equation}
	bound \eqref{eq:bounjump} leads to the fact that $\lim_{\varepsilon\rightarrow 0} \smthdetector[i](\unk) < 1$ when there is no extremum on $i$. 
\end{proof}

It is straightforward to check the following results. 
\begin{corollary}
	System \eqref{eq:algebraic} with the definition of the shock detector \eqref{eq:alpha_smooth} and artificial diffusion \eqref{eq:soft_nu} is LED and satisfies the local DMP. The method tends to a linearly preserving scheme as $\gamma \to 0$.
\end{corollary}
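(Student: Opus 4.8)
The plan is to derive both claims from results already in the paper: the monotonicity part is a verification of the structural conditions of Theorem~\ref{th:properties} at extremal nodes, and the linearity‑preservation part is a continuity/limiting argument whose target is the non‑smooth scheme of Section~\ref{sec:method}. So first I would fix a node $i\in\nodes$ at which $\unk$ attains a local extremum and check \eqref{eq:conditions} for the matrices $\M(\unk)$ and $\K(\unk)=\F(\unk)+\B(\unk)$ built from \eqref{eq:alpha_smooth}--\eqref{eq:soft_nu}. For the mass matrix, Lemma~\ref{lem:smth_alpha} gives $\smthdetector[i](\unk)=1$, so by \eqref{eq:asymmetricmass} the $i$-th row is fully lumped, $\M_{ij}(\unk)=\delta_{ij}m_i$ with $m_i=\int_\domain\shapef[i]>0$, exactly as in the non‑smooth case. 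For $\K(\unk)$, recall $\B_{ij}=\nu_{ij}\,\ell(i,j)$, hence $\K_{ij}=\F_{ij}-\nu_{ij}$ for $j\neq i$ and $\sum_j\K_{ij}=\sum_j\F_{ij}+\nu_{ii}-\sum_{j\neq i}\nu_{ij}$; the convective row sum vanishes since $\sum_j\shapef[j]\equiv1$ yields $\sum_j\F_{ij}=(\Conv{\unk}{1},\shapef[i])=0$, and the stabilization row sum vanishes by the definition $\nu_{ii}=\sum_{j\neq i}\nu_{ij}$ in \eqref{eq:soft_nu}. It then only remains to show $\K_{ij}\le0$ off the diagonal, i.e.\ $\nu_{ij}\ge\F_{ij}(\unk)$ at the extremum.

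The one new ingredient for that inequality is that the regularized maximum \eqref{eq:smax} dominates the exact maximum: since $\absn[\sigma]{x-y}=\sqrt{(x-y)^2+\sigma}\ge|x-y|$, one has $\smax\{x,y\}\ge\max\{x,y\}$, and applying this twice, $\nu_{ij}=\smax\{\smax\{\smthdetector[i]\F_{ij},\smthdetector[j]\F_{ji}\},0\}\ge\smax\{\smthdetector[i]\F_{ij},\smthdetector[j]\F_{ji}\}\ge\smthdetector[i]\F_{ij}$. At an extremal node $\smthdetector[i]=1$, so $\nu_{ij}\ge\F_{ij}(\unk)$ and therefore $\K_{ij}\le0$. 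Thus the conditions of Theorem~\ref{th:properties} hold at every extremal node, so both the semi‑discrete problem \eqref{eq:algebraic} and its Backward‑Euler discretization \eqref{eq:algebraicdis} are LED; the local DMP for \eqref{eq:algebraicdis} then follows from the LED property by the same time‑stepping induction used in the Corollary after Theorem~\ref{th:led} (a non‑Dirichlet extremum at $t^{n+1}$ is bounded by the corresponding extremum at $t^n$, and recursing to $t^0$ closes the argument; on the Dirichlet inflow there is nothing to prove).

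For the last sentence, I would show that $\smthdetector[i]$ collapses on first‑order polynomials. If $\unk\in P_1(\domain)$ the gradient is constant and, because $\rr_{ij}^\sym$ is antiparallel to $\rr_{ij}$, every directional jump vanishes, $\jump{\gradient\unk}_{ij}=\gradient\unk\cdot\hat{\rr}_{ij}-\gradient\unk\cdot\hat{\rr}_{ij}=0$ (this is the same computation used in Theorem~\ref{th:lp}). Hence the numerator of the argument of $f$ in \eqref{eq:alpha_smooth} reduces to $\absn{0}+\gamma=\sqrt{\varepsilon}+\gamma$, and as $\gamma\to0$ the detector tends to $\big[f(\sqrt{\varepsilon}/D_i)\big]^q$ with $D_i\doteq\sum_{j\in\neighborhood[i]}2\mean{\absd{\gradient\unk\cdot\hat{\rr}_{ij}}}_{ij}$; since $f\in\mathcal{C}^2$ with $f(0)=0$, this is $\mathcal{O}(\varepsilon^{q/2})$, and sending the remaining regularization parameters $\varepsilon$ and $\sigma$ to zero as well drives $\nu_{ij}\to0$ while $\M(\unk)$ recovers the consistent mass matrix, so the Galerkin scheme — trivially linearity preserving — is recovered. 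This is the precise sense I would attach to ``tends to a linearly preserving scheme'': the detector on $P_1$ functions is not identically zero but of the size of the smoothing parameters, i.e.\ linearity preservation is enforced only weakly, consistently with Theorem~\ref{th:lp} read through the regularization.

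The monotonicity part offers essentially no difficulty once $\smax\ge\max$ is noted; the delicate point — and where most of the writing goes — is making the limiting statement precise. One must fix the regime in which the limit is taken (it does not suffice to send $\gamma\to0$ alone, since the $\varepsilon$ inside $\absn{\cdot}$ in the numerator and the $\sigma$ inside $\smax$ leave an $\mathcal{O}(\varepsilon^{q/2},\sqrt{\sigma})$ residual), and one must dispose of the degenerate case $D_i=0$. That case occurs only when $\gradient\unk\cdot\hat{\rr}_{ij}=0$ for all $j$, which on a non‑degenerate mesh forces $\gradient\unk=0$, i.e.\ a constant field; there the ratio in \eqref{eq:alpha_smooth} tends to $(\sqrt{\varepsilon}+\gamma)/\gamma$ and the detector saturates at $1$, but the stabilization contribution $\langle\B(\unk)\unk,\test\rangle$ still vanishes identically on constants because of the zero‑row‑sum structure $\nu_{ii}=\sum_{j\neq i}\nu_{ij}$, so no spurious diffusion is produced and the constant is preserved regardless. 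Spelling out these caveats, together with the continuity of $f$ and of $\smax$, completes the proof.
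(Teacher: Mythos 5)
Your argument is correct and follows essentially the same route as the paper: the LED/DMP part rests on Lemma \ref{lem:smth_alpha} together with the observation that $\smax\{x,y\}\geq\max\{x,y\}$, so the regularized diffusion \eqref{eq:soft_nu} dominates \eqref{eq:nu} and the hypotheses of Theorem \ref{th:properties} are verified at extremal nodes exactly as in the proof of Theorem \ref{th:led}. The paper compresses all of this into one sentence and dismisses linearity preservation as ``straightforward,'' whereas you actually prove it; your observation that sending $\gamma\to 0$ alone leaves an $\mathcal{O}(\varepsilon^{q/2})$ residual in $\smthdetector[i]$ on nonconstant $P_1$ data (because $\absn{0}=\sqrt{\varepsilon}$ survives in the numerator) is correct and in fact sharper than the paper's own Remark, which asserts the detector vanishes with $\gamma$ for fixed $\varepsilon$; your disposal of the degenerate constant case via the zero row-sum structure of $\B$ is likewise sound.
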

\begin{proof}
	From lemma \ref{lem:smth_alpha} and the definition of the regularized maximum \eqref{eq:smax} it is easy to see that artificial diffusion in \eqref{eq:soft_nu} is greater or equal to the one in \eqref{eq:nu}. Hence, Theorem \ref{th:led} still holds. The linearity preservation is straighforward.
\end{proof}

\begin{remark}
	Note that the smoothed shock detector is not linearly preserving because $\smthdetector[i]$ will never be zero. However, for regions where $\unk$ is constant the gradient is zero, thus the solution is not affected. In the case of $\unk\in P_1(\domain)$, but not constant, $\smthdetector[i]$ goes to zero with $\gamma$. Values of $\gamma$ of order $10^{-8}$ (or even smaller) have been considered in the numerical experiments section with good nonlinear convergence properties. Thus, the linearity preservation is virtually preserved in practice.
\end{remark}

As in the previous section, when restricted to symmetric meshes, the following approximation (similar to the one in Barrenechea et al. \cite{barrenechea_2016}) of \eqref{eq:alpha_smooth} maintains the same properties
\begin{equation}\label{eq:alpha}
	\sdetectorAprox[i] \doteq 
	\left[f\left(\frac{\absn[\varepsilon^*]{\sum_{j\in\neighborhood[i]} u_i-u_j}+\gamma^*} {\sum_{j\in\neighborhood[i]}\absd[\varepsilon^*]{u_i-u_j}+\gamma^*}\right)\right]^q ,
\end{equation}
with $\varepsilon^*\sim\mathcal{O} (h^2\varepsilon)$ and $\gamma^*\sim \mathcal{O}(h\gamma)$.

\section{Nonlinear Solvers}\label{sec:solver}
In this section the methods used for solving the system of nonlinear equations resulting from the above formulation \eqref{eq:algebraicdis} with the artificial diffusion defined in \eqref{eq:soft_nu} is discussed. Taking advantage of the differentiability of the stabilization described in Sect. \ref{sec:dif_stab}, Newton's method is used for the smooth version of the method. In addition, we use fixed point iterations with Anderson acceleration to compare against Newton's method performance. 
In order to define the schemes, it is useful to write the time-discrete problem \eqref{eq:algebraicdis} as
\begin{equation}
	\A(\U^{n+1}) \U^{n+1} = \G
\end{equation}
where $\G$ is the force vector. Let $\J(\U^{n+1})\doteq \frac{\partial \T(\U^{n+1})}{\partial \U^{n+1}}$ be the Jacobian. 

Since the above problem is nonlinear we will solve it iteratively. We denote by $\U^{k,n+1}$ the $k$-th iteration of $\U$ at time step $n+1$. Let us define some auxiliary variables used in the definition of the algorithms: $m$ denotes the number of previous nonlinear iterations used in Anderson acceleration, $s$ is the slope resulting form fitting the last $m$ nonlinear errors, $s_{\min}$ is the minimum slope allowed before increasing the relaxation, $\omega$ is the relaxation parameter, $\omega_{\min}$ is its allowed minimum, $k_{\max}$ is the maximum nonlinear iterations allowed, $tol$ is the nonlinear tolerance, and $nlerr$ is the nonlinear error. 

For the non-differentiable methods in Sect. \ref{sec:method} we use Picard linearization with Anderson acceleration (see Alg. \ref{alg:anderson}). Our particular implementation also includes a simple convergence rate test, where it is decided if the relaxation parameter should be reduced or not. This improves the global convergence rate and the robustness of the method. Moreover, we add a projection onto $\fespace^{\rm adm}$ to ensure that the global DMP in Def. \ref{def:gDMP} is satisfied at all nonlinear iterations. This step is of special interest in the case of solving for variables that cannot become negative, e.g., the density. In this case, the projection onto the space of admissible solutions is performed truncating the obtained solution. However, more sophisticated methodologies can be also applied but at a higher computational cost.

\begin{algorithm}[h]
	\caption{Fixed point iterations with relaxed Anderson acceleration}\label{alg:anderson}
	\KwIn{$\U^{0,n+1}$, $m$, $s_{\min}$, $\omega_{\min}$, $tol$, $\A$, $\G$, $k_{\max}$}
	\KwOut{$\U^{k,n+1}$,$k$}
	$k=1$, $nlerr^1=tol$\\
	\While{($nlerr^k\geq tol$) and ($k<k_{\max}$)}{
		Set $m^k=\min(k,m)$ \\
		Solve $\A(\U^{k,n+1})\Ua^{k,n+1} = \G$ \\
		Compute $r^{k,n+1} = \Ua^{k,n+1} - \U^{k,n+1}$ \\
		Minimize $\|\sum_{i=1}^{m^k} \xi_i^k r^{k-m^k+i,n+1}\|$ with respect to $\xi_i^k$ subject to $\sum_{i=1}^{m^k} \xi_i^k=1$\\
		Set $\U^{k+1,n+1}=(1-\omega_k)\sum_{i=1}^{m^k} \xi_i^k \U^{k-m+i,n+1} + \omega_k\sum_{i=1}^{m^k} \xi_i^k \Ua^{k-m^k+i,n+1}$ \\
		Project $\U^{k+1,n+1}$ to $\fespace^{\rm adm}$ \\
		Set $nlerr^k=\frac{\|\U^{k+1,n+1}-\U^{k,n+1}\|}{\|\U^{k+1,n+1}\|}$ \\
		Compute the slope ($s$) of $\{nlerr^i\}$ with $k\geq i \geq k-m^k$\\
		\eIf{$(s<s_{\min})$ and $(\omega>\omega_{\min})$}{
			Set $\omega_{k+1}=\omega_k-0.1$
		}{
		Set $\omega_{k+1}=\omega_k$
	}
	Update $k= k+1$
}
\end{algorithm}

For the differentiable method, Newton's linearization is used (see Alg. \ref{alg:newton}). In addition, we supplement it with the line search method to improve robustness. We use numerical 1D minimization of the residual norm up to a tolerance of $10^{-4}$ for the line search method.
Following the same approach in Alg. \ref{alg:anderson}, a projection to the FE space of admissible solutions can be performed in Alg. \ref{alg:newton}. As said before, this step ensures that for all nonlinear iterations the solution satisfies the global DMP. The numerical experiments in the next section show that the modified method keeps quadratic convergence, even though we do not have a theoretical analysis.

\begin{algorithm}[h]
	\caption{Newton's method + Line search}\label{alg:newton}
	\KwIn{$\U^{0,n+1}$,$\U^n$, $tol$, $\J$, $\R$, $k_{\max}$}
	\KwOut{$\U^{k,n+1}$,$k$}
	$k=1$, $nlerr^1=tol$ \\
	\While{($nlerr^k\geq tol$) and ($k<k_{\max}$)}{
		Solve $\J(\U^{k,n+1})\Delta \U^{k,n+1} = -\T(\U^{k,n+1})$ \\
		Minimize $\|\T(\U^{k,n+1}+\xi^k\Delta \U^{k,n+1})\|$ with respect to $\xi\in[0,1]$ \\
		Set $\U^{k+1,n+1}=\U^{k,n+1} + \xi^k\Delta \U^{k,n+1}$ \\
		Project $\U^{k+1,n+1}$ to $\fespace^{\rm adm}$ \\
		Set $nlerr^k=\frac{\|\xi^k\Delta \U^{k,n+1}\|}{\|\U^{k+1,n+1}\|}$\\
		Update $k= k+1$
	}
\end{algorithm}

\section{Numerical Experiments}\label{sec:num_exp}
\subsection{Steady problems}
First, in order to test the previous formulation, the convergence to a smooth solution is analyzed. For this purpose, the following equation is solved
\begin{equation}\label{eq:steady_transport}
	\begin{array}{rlcl}
		\gradient\cdot(\mathbf{v}u) &= 0 &\text{ in }& \domain=[0,1]\times[0,1], \\
		u &= u_D &\text{ on } & \inflowboundary,
	\end{array}
\end{equation}
with $\mathbf{v}(x,y) \doteq \left(1,\, 0\right)$, and inflow boundary conditions $u_D=y-y^2$ on $\partial\domain\backslash\{x=1\}$. This problem consists in the transport of the parabolic profile along the $x$ direction, which has the analytical solution $u(x,y) =y-y^2$.

Fig. \ref{fig:convergence} shows the convergence rates using the previously defined formulation (\eqref{eq:algebraicdis} with \eqref{eq:soft_nu}), and the Galerkin formulation. To perform this test, an initial mesh of $12\times 12\,Q_1$ has been considered, then successive refinements have been performed up to a $96\times 96\,Q_1$ mesh. Analogous meshes has been also used for $P_1$ FE. 
Newton's method has been used with $q=4$, $\varepsilon=10^{-7}$, $\sigma=|\beta|h^4 10^{-8}$ and $\gamma=10^{-10}$. In this case, $\sigma$ has been scaled as $\vert\beta\vert^2 L^{2(d-3)}h^4$ in order to recover optimal convergence, where $L$ denotes a characteristic length of the physical domain $\Omega$. As desired, the convergence rates are not affected by the stabilization, while (as expected) the stabilized solutions have higher errors.
\begin{figure}[h]
	\centering
	\includegraphics[width=0.6\textwidth]{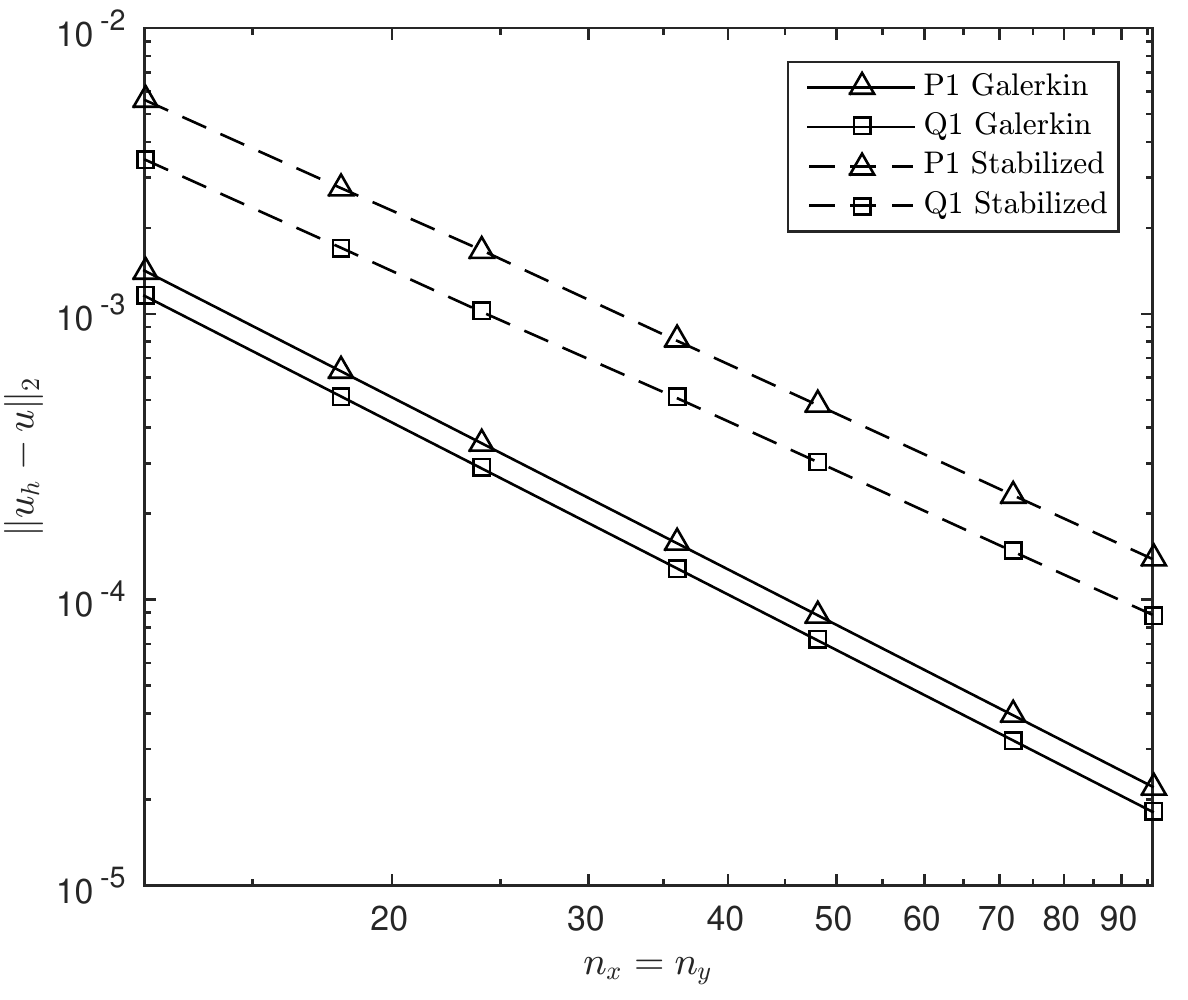}
	\caption{Convergence test, $L^2(\domain)$ error versus size of the mesh. For $P_1$ and $Q_1$ FE meshes ranging from $h=1/12$ to $h=1/96$. Newton's method has been used with parameters $q=4$, $\varepsilon=10^{-7}$, {$\sigma=|\beta|h^4 10^{-8}$} and $\gamma=10^{-10}$.}
	\label{fig:convergence}
\end{figure}

A typical linear test to assess the performance of a shock capturing method is the propagation of a discontinuity. Consider now the previous hyperbolic PDE \eqref{eq:steady_transport}
with $\mathbf{v}(x,y) \doteq \left(\nicefrac{1}{2},\, \sin\nicefrac{-\pi}{3}\right)$, and inflow boundary conditions $u_D=1$ on $\{x=0\}\cap \{y>0.7\}$ and $y=1$, while $u_D=0$ at the rest of the inflow boundary. This problem has the following analytical solution
\begin{equation}
	u(x,y) =  \left\{ \begin{array}{cl}
		1 & \text{if}\quad y > 0.7 + 2x\sin\nicefrac{-\pi}{3}, \\
		0 & \text{otherwise} .
	\end{array}\right. 
\end{equation}
At Fig. \ref{sfig:sharp-conv}, the numerical solution using the stabilization in \eqref{eq:soft_nu} is shown. A $48\times 48\,Q_1$ mesh have been used. The values chosen for the parameters in \eqref{eq:soft_nu} are $q=25$, $\varepsilon=10^{-4}$, {$\sigma=|\beta|10^{-9}$}, and $\gamma=10^{-10}$. This parameter choice makes the solution at the outflow sharp while the DMP is always satisfied. Furthermore, convergence is not jeopardized thanks to the smoothed stabilization. Particularly, it took 18 iterations for the Newton's method to converge to a nonlinear tolerance of $10^{-6}$. The non-smooth version in Fig. \ref{sfig:nonsmooth} (\eqref{eq:algebraic} with \eqref{eq:nu}) did not converge using Anderson acceleration, adding a fixed relaxation parameter of $\omega=0.5$ took 392 iterations, and 117 with Alg. \ref{alg:anderson}. In any case, observing Fig. \ref{fig:overlapedoutf}, where the outflow profile is depicted, no apparent improvement on accuracy is observed when using the non-smooth version.
\begin{figure}[h]
	\centering
	\begin{subfigure}[t]{0.45\textwidth}
		\includegraphics[width=\textwidth]{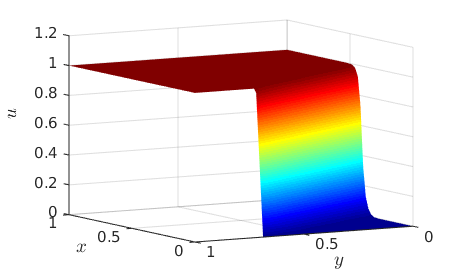}
		\caption{Smooth stabilization \eqref{eq:alpha_smooth}, with parameters $q=25$, $\varepsilon=10^{-4}$, {$\sigma=\vert\beta\vert 10^{-9}$, and $\gamma=10^{-10}$}.}
		\label{sfig:sharp-conv}
	\end{subfigure}
	\quad
	\begin{subfigure}[t]{0.45\textwidth}
		\includegraphics[width=\textwidth]{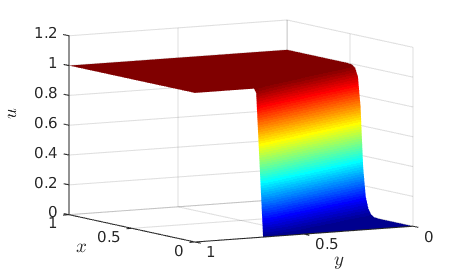}
		\caption{Non-smooth version \eqref{eq:alpha_lp} with $q=25$.}
		\label{sfig:nonsmooth}
	\end{subfigure}
	\caption{Stabilized solution of the straight propagation of a discontinuity test using the steady version of discrete problem \eqref{eq:algebraicdis} with two stabilization choices \eqref{eq:alpha_smooth} or \eqref{eq:alpha_lp}.}
\end{figure}
\begin{figure}[h]
	\centering
	\includegraphics[width=0.45\textwidth]{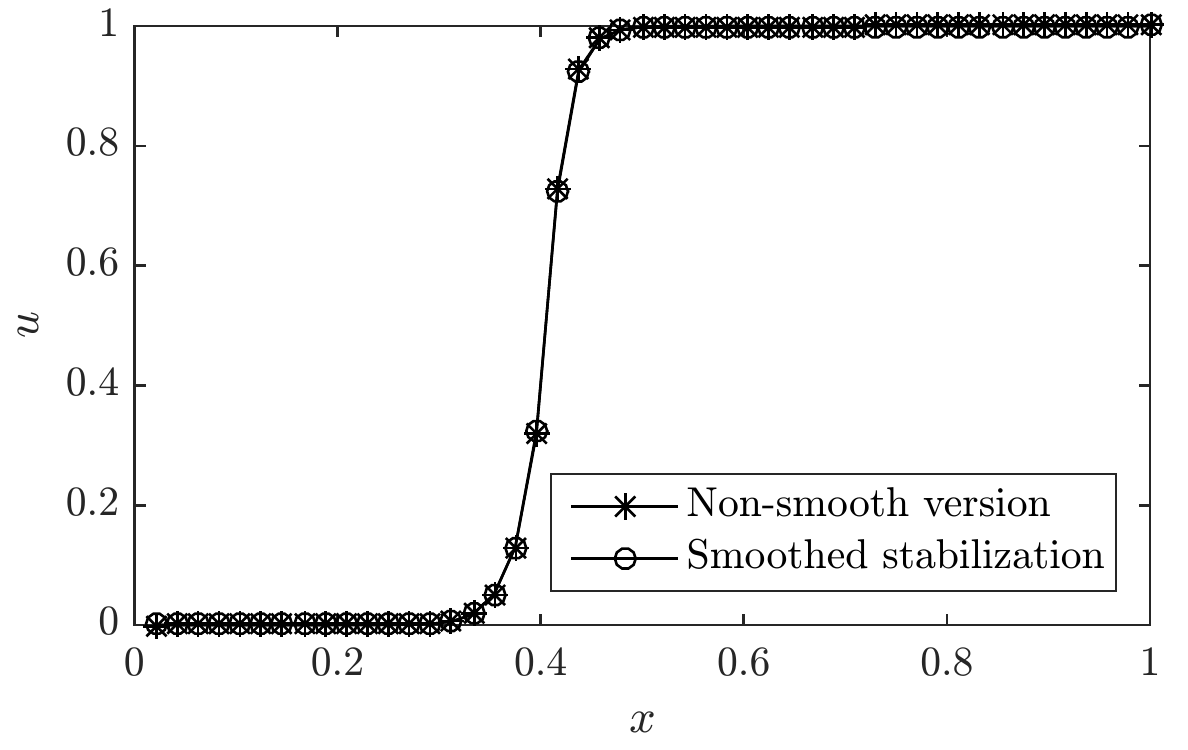}
	\caption{Stabilized solution of the straight propagation of a discontinuity test using the steady version of discrete problem \eqref{eq:algebraicdis} with two stabilization choices \eqref{eq:alpha_smooth} and \eqref{eq:alpha_lp}. The stabilization parameters used for the smoothed version are $q=25$, $\varepsilon=10^{-4}$, $\sigma=\vert\beta\vert 10^{-9}$, and $\gamma=10^{-10}$.}
	\label{fig:overlapedoutf}
\end{figure}

Fig. \ref{fig:linear-conv} shows the solution for several combinations of $q$ and $\varepsilon$, with {$\sigma=|\beta|\varepsilon 10^{-5}$} and $\gamma=10^{-10}$, solved with the two nonlinear solvers presented in the previous section over a $48\times 48 \, Q_1$ mesh. Furthermore,  the $\Vert u - \unk \Vert_{L^1}$ and $\Vert u - \unk\Vert$ errors, computed at the whole domain and restricted to the outflow boundary, are listed in Table \ref{tab:linear-conv}. These results show that, as expected, either increasing $q$ or reducing $\varepsilon$ the $L^1$ error diminishes. Nevertheless, the computational cost also increases at a higher rate. The same can be observed for the $\l2$ error. It is slightly reduced after increasing $q$ or diminishing $\varepsilon$, while this makes nonlinear convergence much harder. Moreover, comparing both nonlinear solvers in Sect. \ref{sec:solver}, it is important to note that using Newton's method the number of nonlinear iterations is reduced between 10 to 15 times.

\begin{table}[h]
	{\footnotesize
		\begin{center}
			\caption{Straight propagation test errors and iterations, using the steady version of discrete problem \eqref{eq:algebraicdis} and nonlinear diffusion \eqref{eq:soft_nu}, for different values of $q$ and $\varepsilon$, {$\sigma=|\beta|\varepsilon 10^{-5}$}, $\gamma=10^{-10}$, and both nonlinear solvers in Sect. \ref{sec:solver}.}\label{tab:linear-conv}
			\begin{tabular}{cccccccccc}\hline
				\multirow{2}{*}{$q$} & \multirow{2}{*}{$\varepsilon$} &  \multicolumn{4}{c}{Iterations} & \multirow{2}{*}{$L_1$ error} & $L_1$ error & \multirow{2}{*}{$L_2$ error} & $L_2$ error \\
				&  & A & Ap & N & Np &
				&  at $\Gamma_{\text{out}}$ & &  at $\Gamma_{\text{out}}$  \\ \hline 
				1 & $10^{-1}$ & 42 & 42 & 9 & 9 & 2.77e-02 & 5.57e-02 & 8.65e-02 & 1.23e-01 \\ 
				1 & $10^{-2}$ & 43 & 42 & 8 & 8 & 2.61e-02 & 5.16e-02 & 8.40e-02 & 1.18e-01 \\ 
				1 & $10^{-3}$ & 50 & 58 & 7 & 7 & 2.59e-02 & 5.09e-02 & 8.37e-02 & 1.17e-01 \\ 
				1 & $10^{-4}$ & 50 & 57 & 7 & 7 & 2.58e-02 & 5.08e-02 & 8.37e-02 & 1.17e-01 \\ 
				1 & $0$ & 56 & 47 &   &   & 2.59e-02 & 5.10e-02 & 8.37e-02 & 1.17e-01 \\ 
				\hdashline[0.5pt/5pt] 
				4 & $10^{-1}$ & 51 & 64 & 8 & 8 & 2.20e-02 & 4.43e-02 & 7.79e-02 & 1.12e-01 \\ 
				4 & $10^{-2}$ & 58 & 61 & 11 & 11 & 1.83e-02 & 3.45e-02 & 6.97e-02 & 9.70e-02 \\ 
				4 & $10^{-3}$ & 60 & 68 & 10 & 10 & 1.77e-02 & 3.28e-02 & 6.83e-02 & 9.44e-02 \\ 
				4 & $10^{-4}$ & 66 & 85 & 11 & 11 & 1.76e-02 & 3.25e-02 & 6.82e-02 & 9.40e-02 \\ 
				4 & $0$ & 70 & 73 &   &   & 1.76e-02 & 3.24e-02 & 6.81e-02 & 9.39e-02 \\ 
				\hdashline[0.5pt/5pt] 
				8 & $10^{-1}$ & 62 & 70 & 9 & 9 & 2.10e-02 & 4.27e-02 & 7.68e-02 & 1.11e-01 \\ 
				8 & $10^{-2}$ & 71 & 63 & 11 & 11 & 1.62e-02 & 3.04e-02 & 6.63e-02 & 9.23e-02 \\ 
				8 & $10^{-3}$ & 82 & 67 & 13 & 13 & 1.51e-02 & 2.75e-02 & 6.33e-02 & 8.74e-02 \\ 
				8 & $10^{-4}$ & 70 & 77 & 12 & 12 & 1.49e-02 & 2.69e-02 & 6.27e-02 & 8.66e-02 \\ 
				8 & $0$ & 94 & 60 &   &   & 1.48e-02 & 2.68e-02 & 6.26e-02 & 8.64e-02 \\ 
				\hdashline[0.5pt/5pt] 
				25 & $10^{-1}$ & 39 & 58 & 11 & 12 & 2.03e-02 & 4.18e-02 & 7.63e-02 & 1.11e-01 \\ 
				25 & $10^{-2}$ & 57 & 62 & 19 & 20 & 1.46e-02 & 2.78e-02 & 6.39e-02 & 8.95e-02 \\ 
				25 & $10^{-3}$ & 154 & 66 & 15 & 15 & 1.28e-02 & 2.35e-02 & 5.90e-02 & 8.24e-02 \\ 
				25 & $10^{-4}$ & 116 & 82 & 17 & 18 & 1.25e-02 & 2.27e-02 & 5.79e-02 & 8.18e-02 \\ 
				25 & $0$ & 86 & 163 &   &   & 1.23e-02 & 2.25e-02 & 5.75e-02 & 8.15e-02 \\ \hline 
				\multicolumn{10}{c}{\scriptsize A: Alg. \ref{alg:anderson} without projecting to $\fespace^{\rm adm}$, Ap: Alg. \ref{alg:anderson}.} \\
				\multicolumn{10}{c}{\scriptsize N: Alg. \ref{alg:newton} without projecting to $\fespace^{\rm adm}$, Np: Alg. \ref{alg:newton}.} 
			\end{tabular}
		\end{center}}
	\end{table}
	
	\begin{figure}[h]
		\centering
		\includegraphics[width=\textwidth]{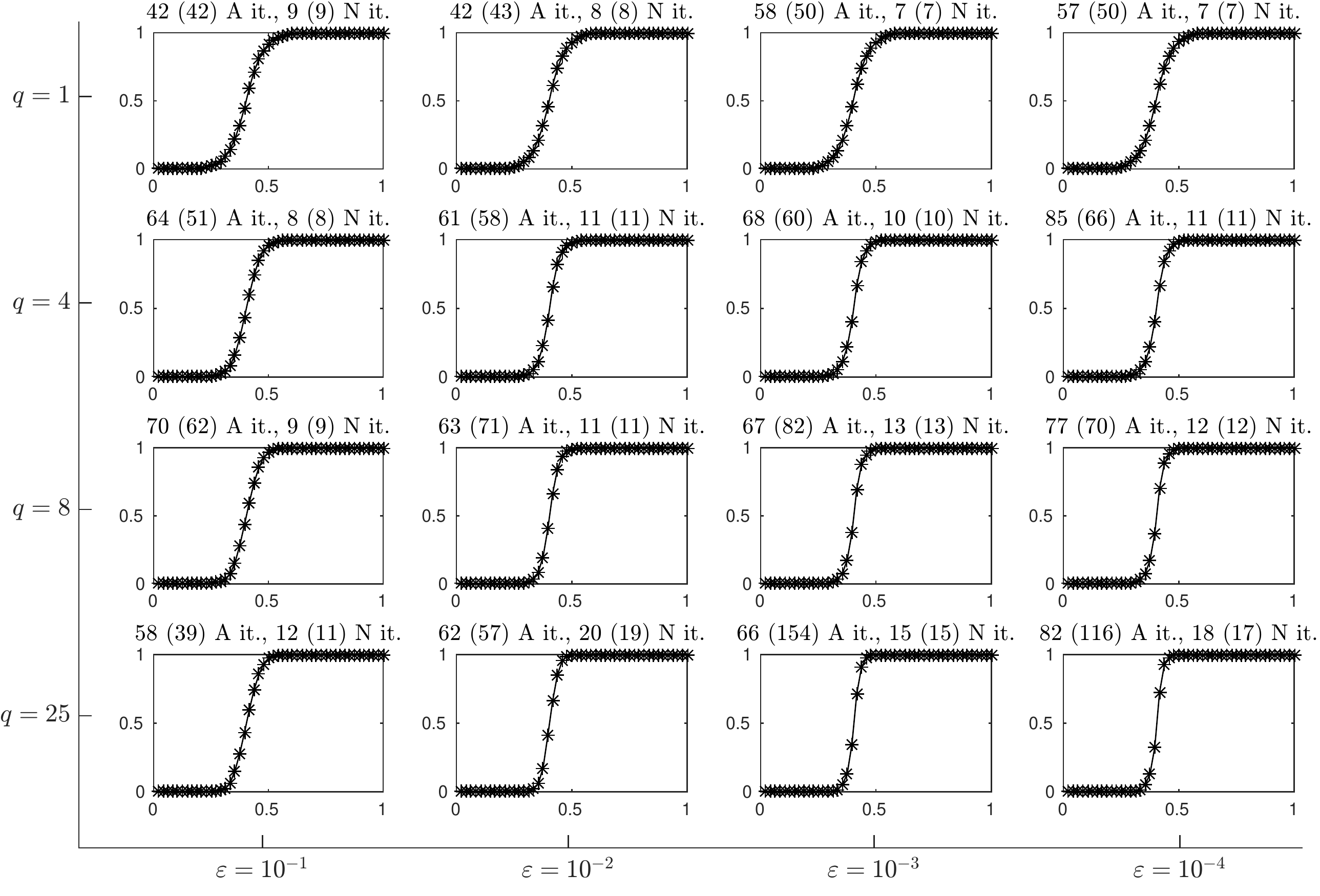}
		\caption{Straight propagation test solution at the outflow boundary $\partial\domain\backslash\inflowboundary$. Using the steady version of discrete problem \eqref{eq:algebraicdis} and nonlinear diffusion \eqref{eq:soft_nu}, for different values of $q$ and $\varepsilon$, {$\sigma=|\beta|\varepsilon 10^{-5}$}, $\gamma=10^{-10}$, and both nonlinear solvers in Sect. \ref{sec:solver}. The result in brackets shows the number of iterations if no projection to $\fespace^{\rm adm}$ is done.}
		\label{fig:linear-conv}
	\end{figure}
	
	It is important to analyze the solution at each nonlinear iteration. If the projection to the space of admissible solutions is not performed, it is possible that the solution does neither satisfy the local nor the global DMP (Def. \ref{def:lDMP} or \ref{def:gDMP}, resp.) at some nonlinear iterations. The DMP is only proved when convergence is attained. We denote by global DMP violation the difference between the global extremum of the analytical solution and the actual global extremum of the numerical solution.
	Fig. \ref{fig:dmplinear} shows the global DMP violation of the maximum and the minimum values produced at each nonlinear iteration for different values of $q$, $\varepsilon$, and $\sigma$. For $q=25$, the global DMP is clearly not satisfied at the beginning of the iterative process. In this particular case, this does not destroy the nonlinear convergence, but this is not the case in some other problems, e.g. Euler's equations. Therefore, adding a projection step to $\fespace^{\rm adm}$ is highly recommended. Further, it can be observed in Table \ref{tab:linear-conv} that in practice the projection step almost does not affect Newton convergence rate.
	\begin{figure}[h]
		\centering
		\begin{subfigure}[t]{0.45\textwidth}
			\includegraphics[width=\textwidth]{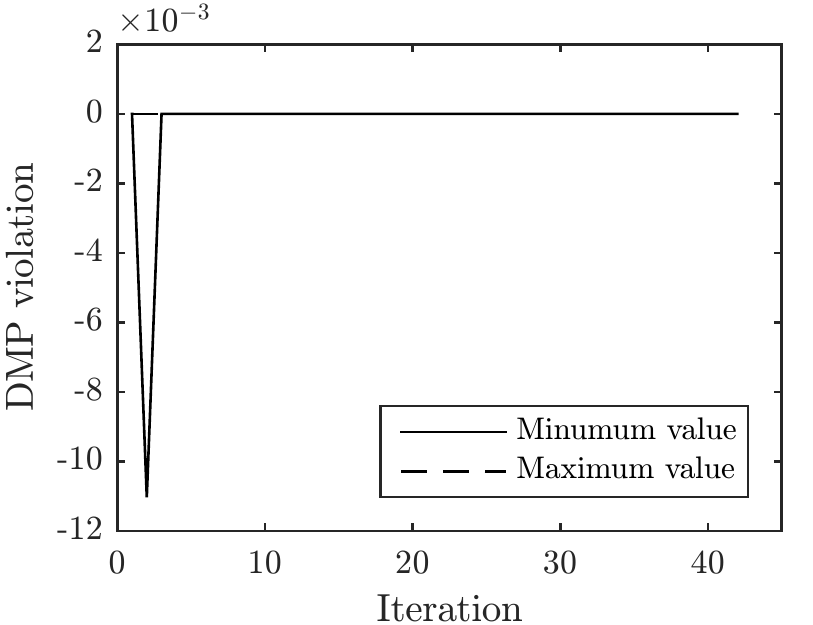}
			\caption{Using Alg. \ref{alg:anderson} with $q=1$, $\varepsilon=10^{-1}$, {$\sigma=|\beta|10^{-6}$}, and $\gamma=10^{-10}$.}
		\end{subfigure}
		\qquad
		\begin{subfigure}[t]{0.45\textwidth}
			\includegraphics[width=\textwidth]{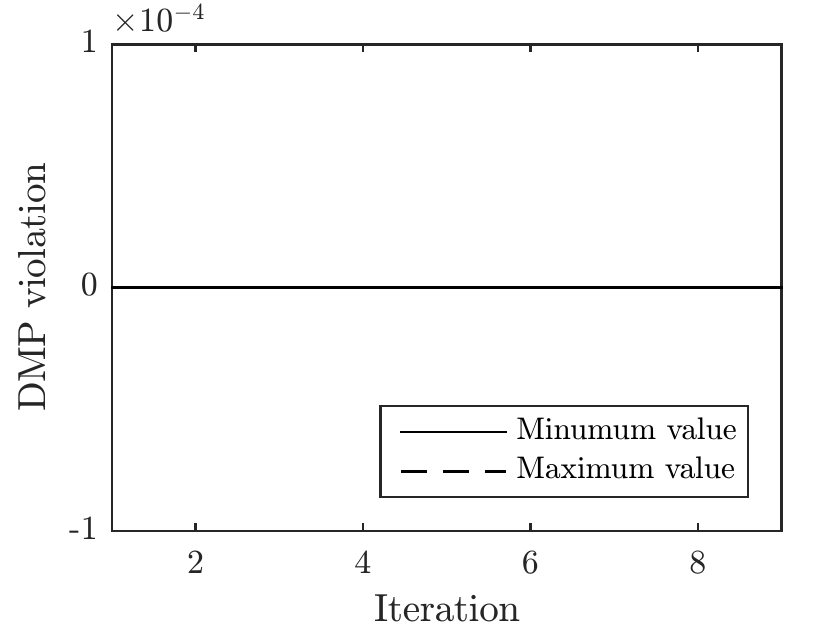}
			\caption{Using Alg. \ref{alg:newton} with $q=1$, $\varepsilon=10^{-1}$, {$\sigma=|\beta|10^{-6}$}, and $\gamma=10^{-10}$.}
		\end{subfigure}
		\begin{subfigure}[t]{0.45\textwidth}
			\includegraphics[width=\textwidth]{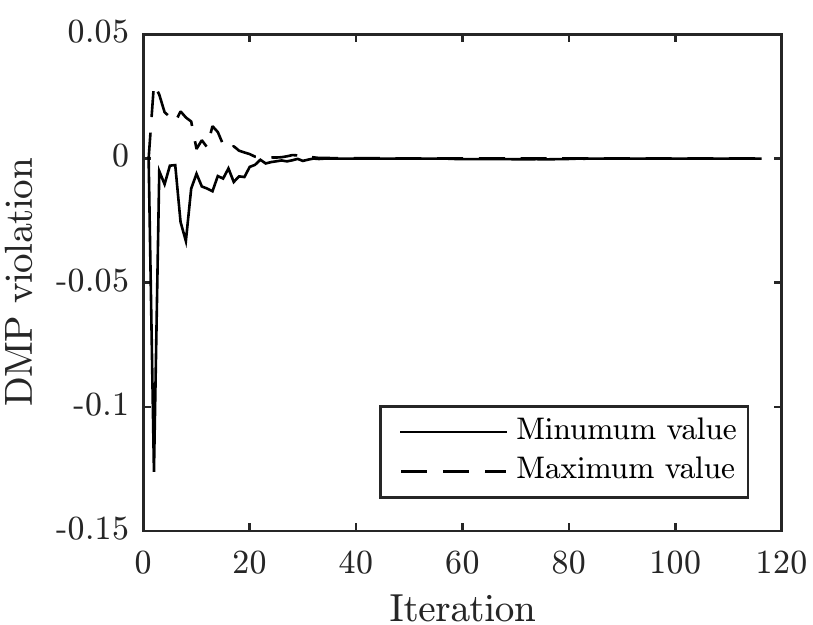}
			\caption{Using Alg. \ref{alg:anderson} with $q=25$, $\varepsilon=10^{-4}$, {$\sigma=|\beta|10^{-9}$}, and $\gamma=10^{-10}$.}
		\end{subfigure}
		\qquad
		\begin{subfigure}[t]{0.45\textwidth}
			\includegraphics[width=\textwidth]{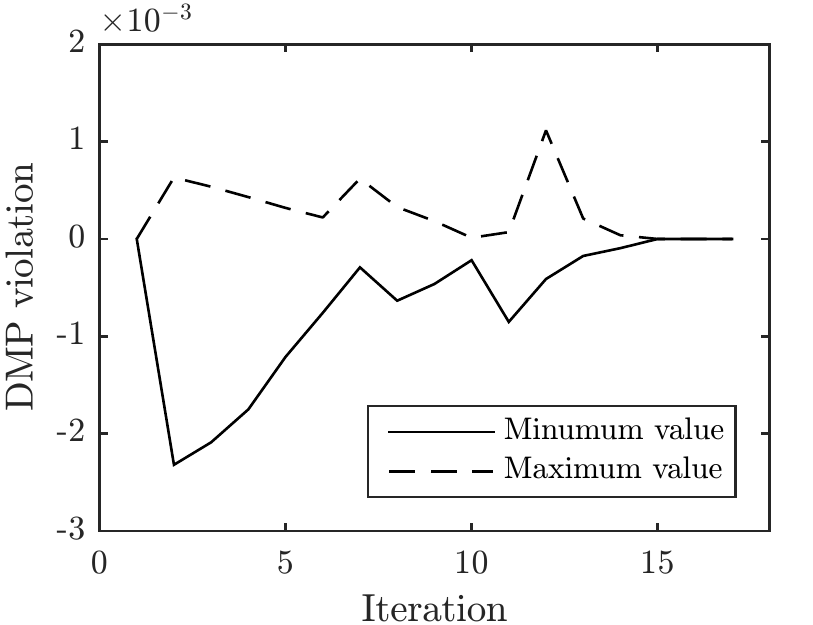}
			\caption{Using Alg. \ref{alg:newton} with $q=25$, $\varepsilon=10^{-4}$, {$\sigma=|\beta|10^{-9}$}, and $\gamma=10^{-10}$.}
		\end{subfigure}
		\caption{Evolution of global DMP violation during nonlinear iterations when avoiding the projection step in Algs. \ref{alg:anderson} and \ref{alg:newton} for the straight propagation of a discontinuity test.}
		\label{fig:dmplinear}
	\end{figure}

	Finally, it is worth to test the nonlinear convergence of the method as the mesh is refined for a problem with a discontinuity. For this purpose, we have solved the previous benchmark with $q=4$, $\varepsilon=10^{-2}$, $\sigma=\vert\beta\vert h^4 10^{-6}$, and $\gamma=10^{-10}$. The used meshes range form $12\times 12\,Q_1$ to $96\times 96\,Q_1$.

	At Fig. \ref{fig:refinement}, the number of nonlinear iterations for each mesh size is depicted. For Alg. \ref{alg:anderson} it can be observed that the number of iterations is increasing. On the contrary, this behavior is much less pronounced for Alg. \ref{alg:newton}; the number of iterations slightly increases and remains constant in the last interval.

	\begin{figure}[h]
		\centering
		\includegraphics[width=0.5\textwidth]{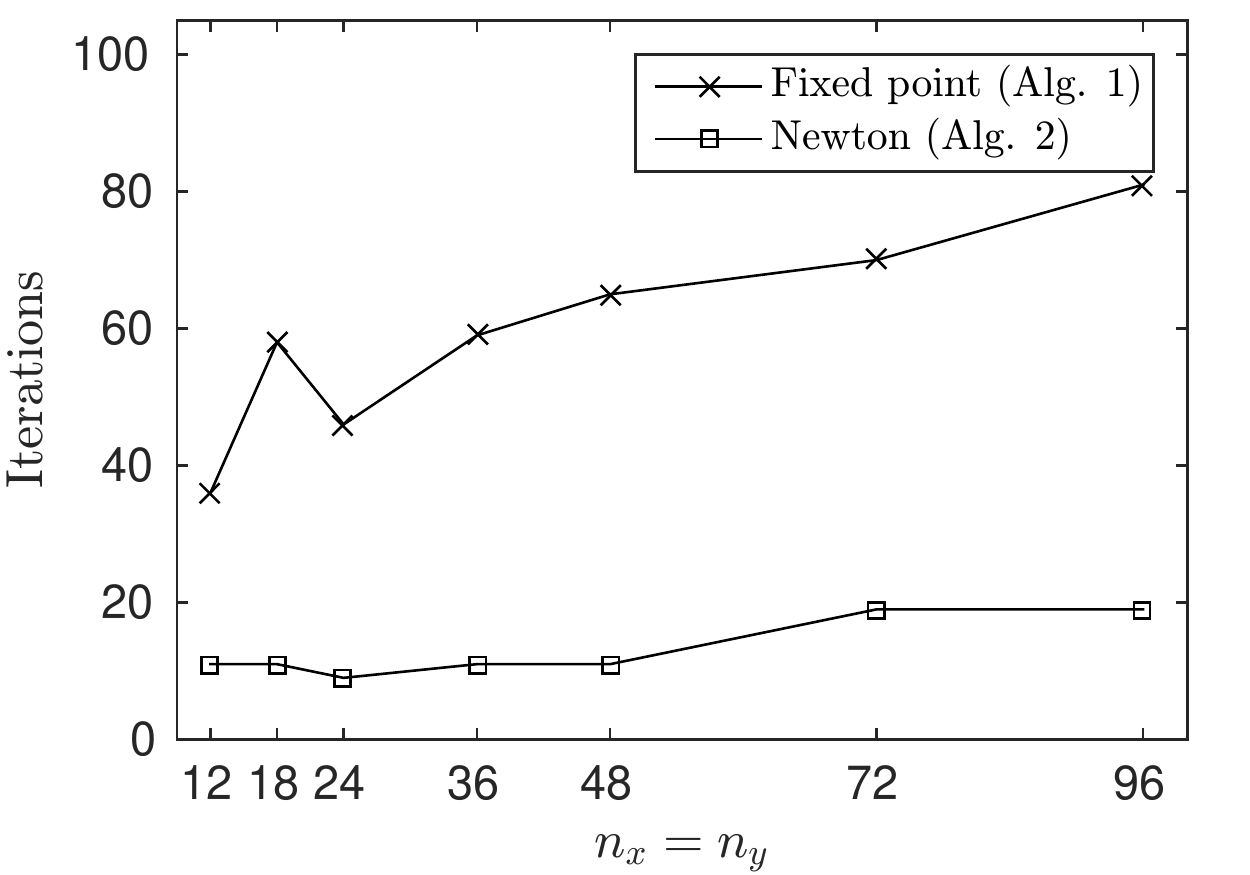}
		\caption{Straight propagation test nonlinear iterations as mesh refined from $12\times 12\,Q_1$ to $96\times 96\,Q_1$, for both Alg. \ref{alg:anderson} and Alg. \ref{alg:newton}. The shock capturing parameters used are $q=4$, $\varepsilon=10^{-2}$, $\sigma=\vert\beta\vert h^4 10^{-6}$, and $\gamma=10^{-10}$.}
		\label{fig:refinement}
	\end{figure} 
	
	Consider now the hyperbolic PDE \eqref{eq:steady_transport} on $\Omega=[0,1]\times[-1,1]$ with $\mathbf{v}(x,y) \doteq (y, -x)$, and inflow boundary conditions
	\begin{equation}
		u_D = \left\{ \begin{array}{cl}
			1 & \text{if}\quad 0.35 < x < 0.65, \\
			0 & \text{otherwise}.
		\end{array}\right. 
	\end{equation}
	This particular configuration has the following analytical solution
	\begin{equation}
		u(x,y) =  \left\{ \begin{array}{cl}
			1 & \text{if}\quad 0.35 < \sqrt{x^2 + y^2} < 0.65 ,\\
			0 & \text{otherwise}.
		\end{array}\right. 
	\end{equation}
	At Fig. \ref{fig:circular-conv} the solutions at the outflow boundary are depicted for several combinations of $q$ and $\varepsilon$, with {$\sigma=|\beta|\varepsilon 10^{-5}$} and $\gamma=10^{-10}$. In all cases, we have considered the two schemes presented in  Sect. \ref{sec:solver} using a $64\times 128\,Q_1$ FE mesh. As for the previous numerical experiment, we collect the number of iterations and the errors in Table \ref{tab:circular-conv}. We observe that it is particularly difficult to converge to the solution for $q=1$ and small values of $\varepsilon$. In any case, for $q$ equal to 4 or greater, the number of iterations increase with $q$, as naturally expected. We also observe in this test that the number of nonlinear iterations can be highly reduced using Newton's method. Particularly, it reduces the number of nonlinear iterations up to 20 times. 3D plots of the smoothest and the sharpest solutions in Fig. \ref{fig:circular-conv}  (respectively top-left and bottom-right subfigures) are shown in Fig. \ref{fig:circular-conv-3D} .
	
	\begin{table}
		{\footnotesize
			\begin{center}
				\caption{Circular propagation test errors and iterations, using the steady version of discrete problem \eqref{eq:algebraicdis} and nonlinear diffusion \eqref{eq:soft_nu}, for different values of $q$ and $\varepsilon$, {$\sigma=|\beta|\varepsilon 10^{-5}$}, $\gamma=10^{-10}$, and both nonlinear solvers in Sect. \ref{sec:solver}.}\label{tab:circular-conv}
				\begin{tabular}{cccccccccc}\hline
					\multirow{2}{*}{$q$} & \multirow{2}{*}{$\varepsilon$} &  \multicolumn{4}{c}{Iterations} & \multirow{2}{*}{$L_1$ error} & $L_1$ error & \multirow{2}{*}{$L_2$ error} & $L_2$ error \\
					&  & A & Ap & N & Np &
					&  at $\Gamma_{\text{out}}$ & &  at $\Gamma_{\text{out}}$  \\ \hline
					1 & $10^{-1}$ & 30 & 30 & 9 & 9 & 1.42e-01 & 1.93e-01 & 2.01e-01 & 2.36e-01 \\ 
					1 & $10^{-2}$ & -- & 54 & 10 & 10 & 1.11e-01 & 1.50e-01 & 1.74e-01 & 2.05e-01 \\ 
					1 & $10^{-3}$ & -- & -- & 11 & 11 & 1.05e-01 & 1.42e-01 & 1.68e-01 & 1.99e-01 \\ 
					1 & $10^{-4}$ & 196 & -- & 19 & 19 & 1.04e-01 & 1.40e-01 & 1.68e-01 & 1.98e-01 \\ 
					1 & $0$ & -- & -- &   &   & -- & -- & -- & -- \\ 
					\hdashline[0.5pt/5pt] 
					4 & $10^{-1}$ & 23 & 23 & 10 & 10 & 1.33e-01 & 1.82e-01 & 1.97e-01 & 2.31e-01 \\ 
					4 & $10^{-2}$ & 64 & 64 & 15 & 15 & 8.47e-02 & 1.15e-01 & 1.55e-01 & 1.84e-01 \\ 
					4 & $10^{-3}$ & 105 & 111 & 22 & 22 & 6.74e-02 & 9.31e-02 & 1.34e-01 & 1.64e-01 \\ 
					4 & $10^{-4}$ & -- & 139 & 24 & 24 & 6.38e-02 & 8.88e-02 & 1.29e-01 & 1.60e-01 \\ 
					4 & $0$ & 198 & 194 &   &   & 6.31e-02 & 8.80e-02 & 1.28e-01 & 1.59e-01 \\ 
					\hdashline[0.5pt/5pt] 
					8 & $10^{-1}$ & 23 & 22 & 11 & 11 & 1.32e-01 & 1.81e-01 & 1.97e-01 & 2.31e-01 \\ 
					8 & $10^{-2}$ & 73 & 68 & 15 & 15 & 8.10e-02 & 1.10e-01 & 1.53e-01 & 1.82e-01 \\ 
					8 & $10^{-3}$ & 95 & 96 & 19 & 19 & 5.91e-02 & 8.18e-02 & 1.28e-01 & 1.57e-01 \\ 
					8 & $10^{-4}$ & 100 & 109 & 22 & 22 & 5.28e-02 & 7.46e-02 & 1.18e-01 & 1.50e-01 \\ 
					8 & $0$ & 256 & 231 &   &   & 5.12e-02 & 7.28e-02 & 1.16e-01 & 1.48e-01 \\ 
					\hdashline[0.5pt/5pt] 
					25 & $10^{-1}$ & 22 & 22 & 14 & 14 & 1.32e-01 & 1.80e-01 & 1.97e-01 & 2.31e-01 \\ 
					25 & $10^{-2}$ & 45 & 49 & 16 & 15 & 7.82e-02 & 1.07e-01 & 1.51e-01 & 1.80e-01 \\ 
					25 & $10^{-3}$ & 77 & 70 & 20 & 20 & 5.37e-02 & 7.50e-02 & 1.24e-01 & 1.54e-01 \\ 
					25 & $10^{-4}$ & 131 & 109 & 23 & 24 & 4.51e-02 & 6.49e-02 & 1.11e-01 & 1.44e-01 \\ 
					25 & $0$ & 180 & 289 &   &   & 4.22e-02 & 6.14e-02 & 1.06e-01 & 1.39e-01 \\   \hline
					\multicolumn{10}{c}{\scriptsize A: Alg. \ref{alg:anderson} without projecting to $\fespace^{\rm adm}$, Ap: Alg. \ref{alg:anderson}.} \\
					\multicolumn{10}{c}{\scriptsize N: Alg. \ref{alg:newton} without projecting to $\fespace^{\rm adm}$, Np: Alg. \ref{alg:newton}.} 
				\end{tabular}
			\end{center}}
		\end{table}

		\begin{figure}[h]
			\centering
			\includegraphics[width=\textwidth]{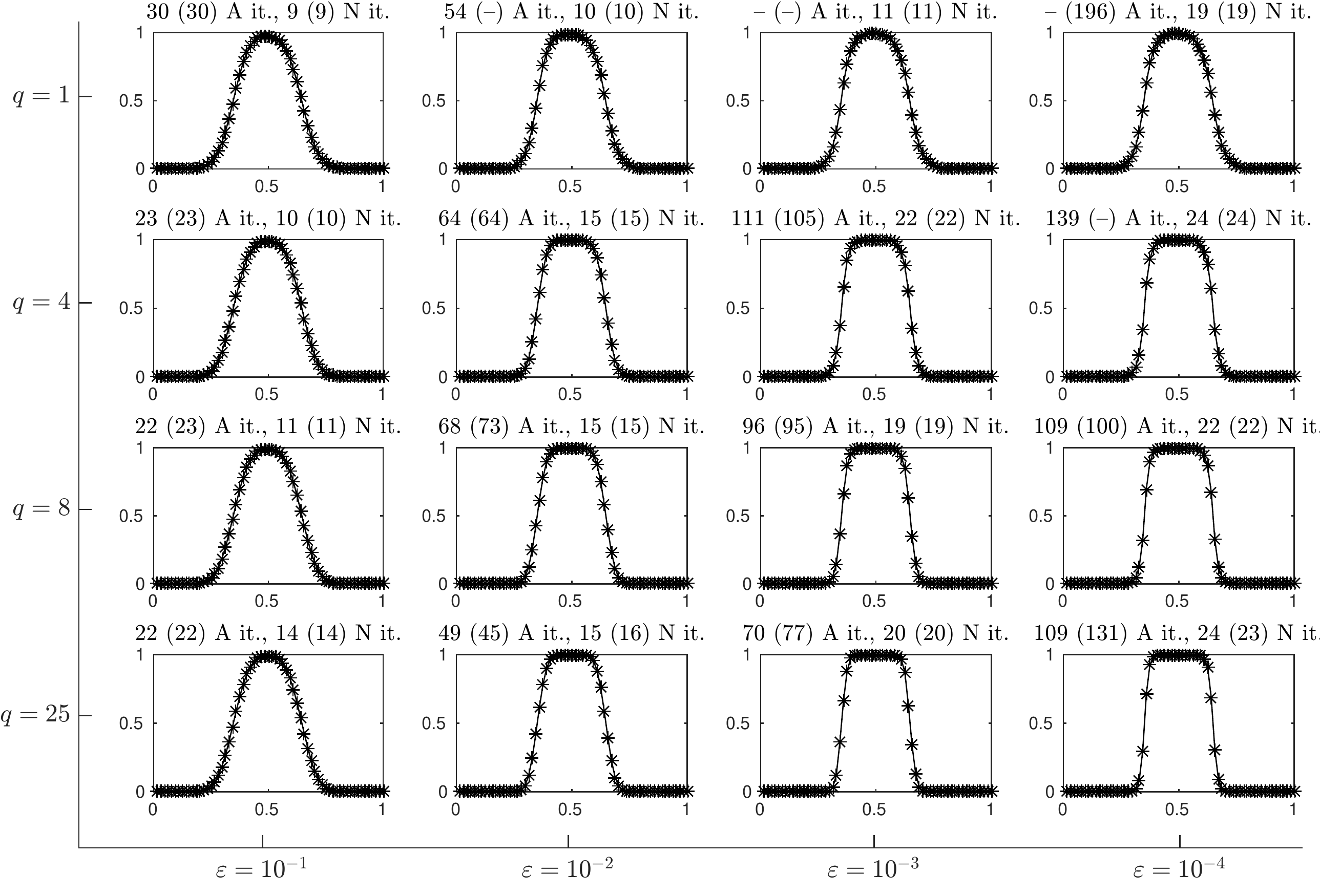}
			\caption{Circular propagation test solution at the outflow boundary $\partial\domain\backslash\inflowboundary$. Using the steady version of discrete problem \eqref{eq:algebraicdis} and nonlinear diffusion \eqref{eq:soft_nu}, for different values of $q$ and $\varepsilon$, {$\sigma=|\beta|\varepsilon 10^{-5}$}, $\gamma=10^{-10}$ and both nonlinear solvers in Sect. \ref{sec:solver}. The result in brackets shows the number of iterations if no projection to $\fespace^{\rm adm}$ is done.}
			\label{fig:circular-conv}
		\end{figure}
		
		\begin{figure}[h]
			\centering  
			\begin{subfigure}[t]{0.45\textwidth}
				\includegraphics[width=\textwidth]{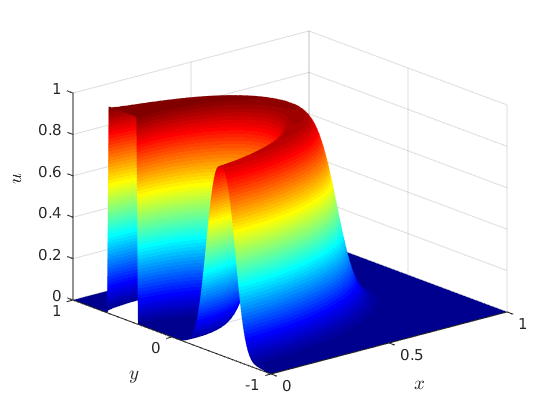}
				\caption{Smoothest solution with parameters: $q=1$, $\varepsilon=10^{-1}$, {$\sigma=|\beta|10^{-6}$}, and $\gamma=10^{-10}$.}
			\end{subfigure}
			\quad
			\begin{subfigure}[t]{0.45\textwidth}
				\includegraphics[width=\textwidth]{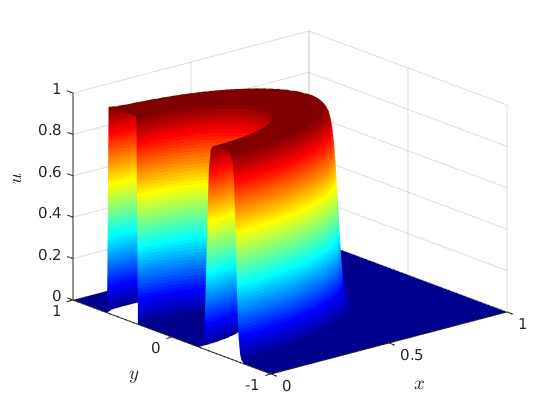}
				\caption{Sharpest solution with parameters: $q=25$ and $\varepsilon=10^{-4}$, {$\sigma=|\beta|10^{-9}$}, and $\gamma=10^{-10}$.}
			\end{subfigure}
			\caption{Stabilized solution of the circular convection test using the steady version of the discrete problem \eqref{eq:algebraicdis} and the nonlinear diffusion \eqref{eq:soft_nu} for two different parameter choices.}
			\label{fig:circular-conv-3D}
		\end{figure}
		
		Fig. \ref{fig:dmpcircular} shows that in this second test, as in the previous one, if the projection step is not performed the global DMP (Def. \ref{def:gDMP}) is not satisfied at all nonlinear iterations. This is specially evident for the combination shown in the figure, i.e., high values of $q$ and low values of $\varepsilon$ and $\sigma$.
		\begin{figure}[h]
			\centering
			\begin{subfigure}[t]{0.45\textwidth}
				\includegraphics[width=\textwidth]{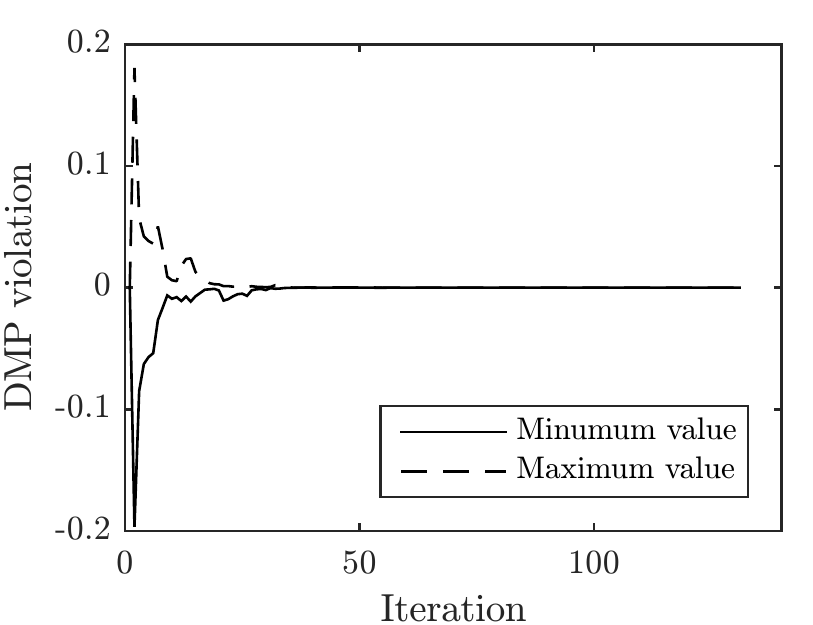}
				\caption{Using Alg. \ref{alg:anderson}}
			\end{subfigure}
			\begin{subfigure}[t]{0.45\textwidth}
				\includegraphics[width=\textwidth]{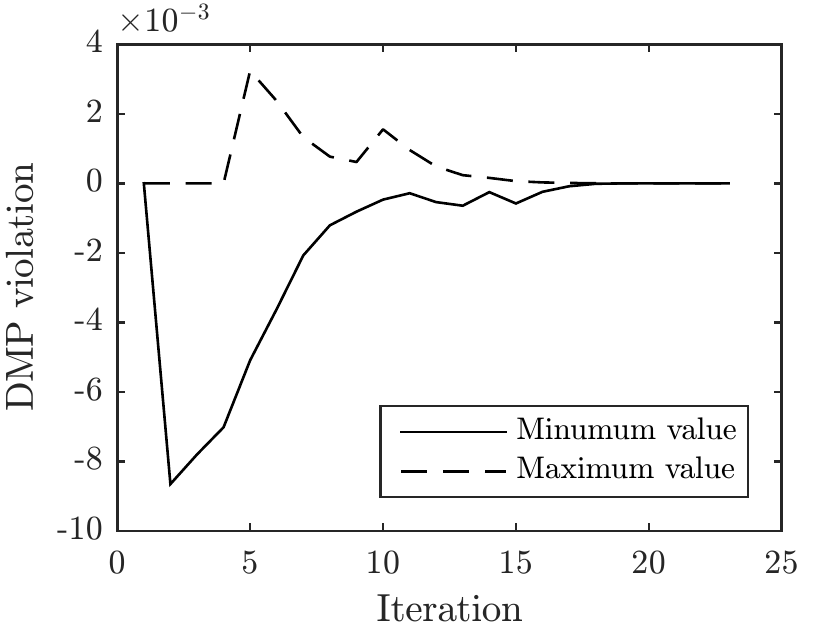}
				\caption{Using Alg. \ref{alg:newton}}
			\end{subfigure}
			\caption{Evolution of global DMP violation during nonlinear iterations when avoiding the projection step in Algs. \ref{alg:anderson} and \ref{alg:newton}  for the circular propagation of a discontinuity. Using $q=25$, $\varepsilon=10^{-4}$, {$\sigma=|\beta|10^{-9}$}, $\gamma=10^{-10}$.}
			\label{fig:dmpcircular}
		\end{figure}
		
		\subsection{Transient transport problems}
		Let us test the performance of the stabilization method in Sect. \ref{sec:dif_stab} for transient problems. For this purpose we will consider the 3 body rotation benchmark that reads as: 
		\begin{equation}\label{eq:transient_transport}
			\begin{array}{rlcl}
				\partial_t u + \gradient\cdot(\mathbf{v}u) &= 0 &\text{ in }& \domain=[0,1]\times[0,1] ,\\
				u &= 0 &\text{ on } & \inflowboundary ,\\
				u &= u_0 &\text{ at } & t=0 ,
			\end{array}
		\end{equation}
		where $\mathbf{v}\doteq(\nicefrac{1}{2}-y,x-\nicefrac{1}{2})$ and
		\begin{equation}
			u_0(x,y)\doteq\left\{\begin{array}{ccl}
				\frac{1}{4} +\nicefrac{ \cos\left(\frac{\pi\sqrt{(x-0.25)^2 + (y-0.5)^2}}{0.15}\right)}{4} & \text{if} & \nicefrac{\sqrt{(x-0.25)^2 + (y-0.5)^2}}{0.15} \leq 1 \\
				1- \nicefrac{\sqrt{(x-0.5)^2 + (y-0.25)^2}}{0.15} & \text{if} & \nicefrac{\sqrt{(x-0.5)^2 + (y-0.25)^2}}{0.15} \leq 1 \\
				1 & \text{if} &\left\{\begin{array}{c}
					\nicefrac{\sqrt{(x-0.5)^2 + (y-0.75)^2}}{0.15} \leq 1 \\ 
					0.55<x<0.45,\,y>0.85 
				\end{array}\right.
			\end{array}
			\right. .
		\end{equation}
		
		The above problem is solved in a $150\times 150\ Q_1$ FE mesh, with solver parameters $q=25$, $\gamma=10^{-8}$, {$\sigma=|\beta|10^{-10}$}, and $\varepsilon=10^{-4}$. The discretization in time is performed using the BE method with a time step of $10^{-3}$. 
		At Fig. \ref{fig:bodyrotationIC}, the initial solution is depicted. Figs. \ref{sfig:bodyrota} to \ref{sfig:bodyrotc} show the solution after one revolution (at time $t=2\pi$). 
		
		The solution obtained with the stabilization in  \eqref{eq:soft_nu}, \eqref{eq:nuConsMass}, and \eqref{eq:nu} are depicted in Figs. \ref{sfig:bodyrota}, \ref{sfig:bodyrotb}, and \ref{sfig:bodyrotc}, respectively. It is observed that the symmetric mass matrix method yields slightly more diffusive solutions than the LED method. This can be better observed in Fig. \ref{fig:bodyrotationCS}, where a cross-section of each of the figures rotated is depicted at $t=0$ and after one revolution ($t=2\pi$) for all three methods. As naturally expected, regularizing the stabilization makes the method faster to converge but the solution becomes smoother. Nevertheless, the regularization parameters ($\sigma$ and $\varepsilon$) allow one to take the choice that better fits the requirements, either a faster but smoother method or the opposite. In any case, all schemes satisfy the DMP at all time steps.
		
		\begin{figure}[h]
			\centering
			\begin{subfigure}[t]{0.45\textwidth}
				\includegraphics[width=\textwidth]{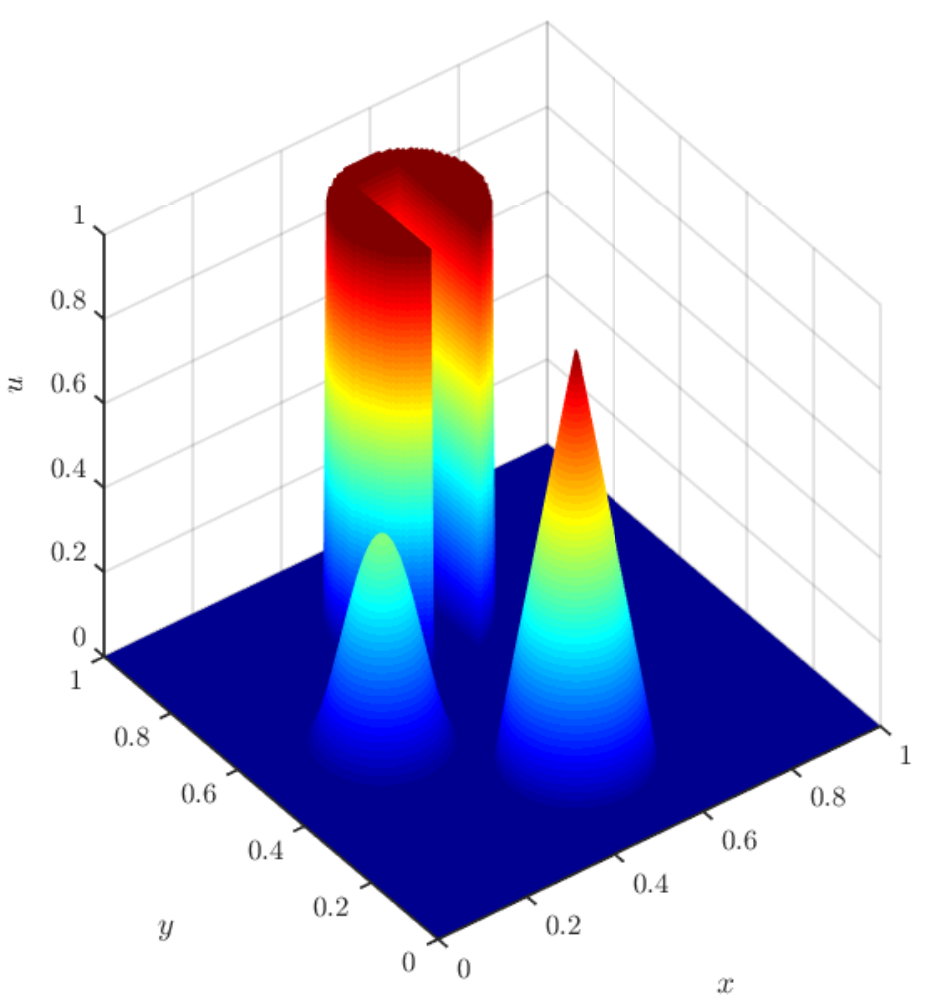}
				\caption{\footnotesize Initial conditions.}
				\label{fig:bodyrotationIC}	
			\end{subfigure}
			\begin{subfigure}[t]{0.45\textwidth}
				\includegraphics[width=\textwidth]{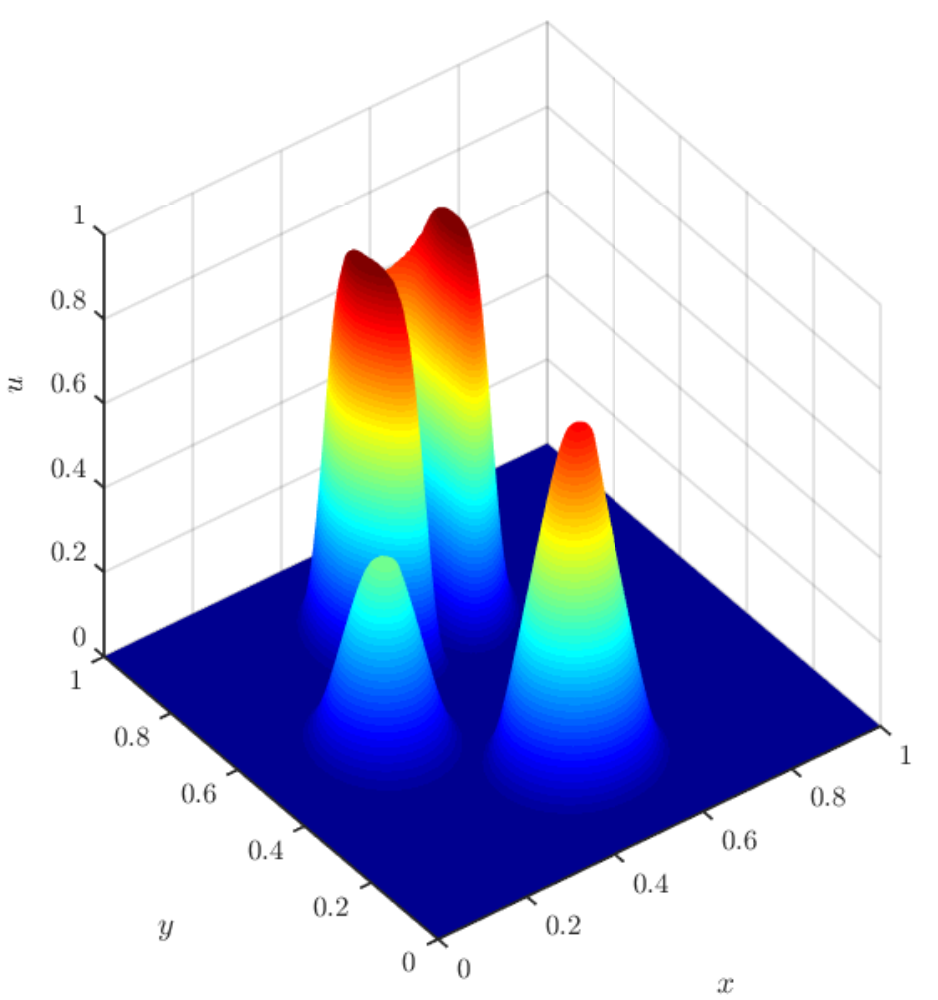}
				\caption{\footnotesize LED scheme.}\label{sfig:bodyrota}
			\end{subfigure}
			\begin{subfigure}[t]{0.45\textwidth}
				\includegraphics[width=\textwidth]{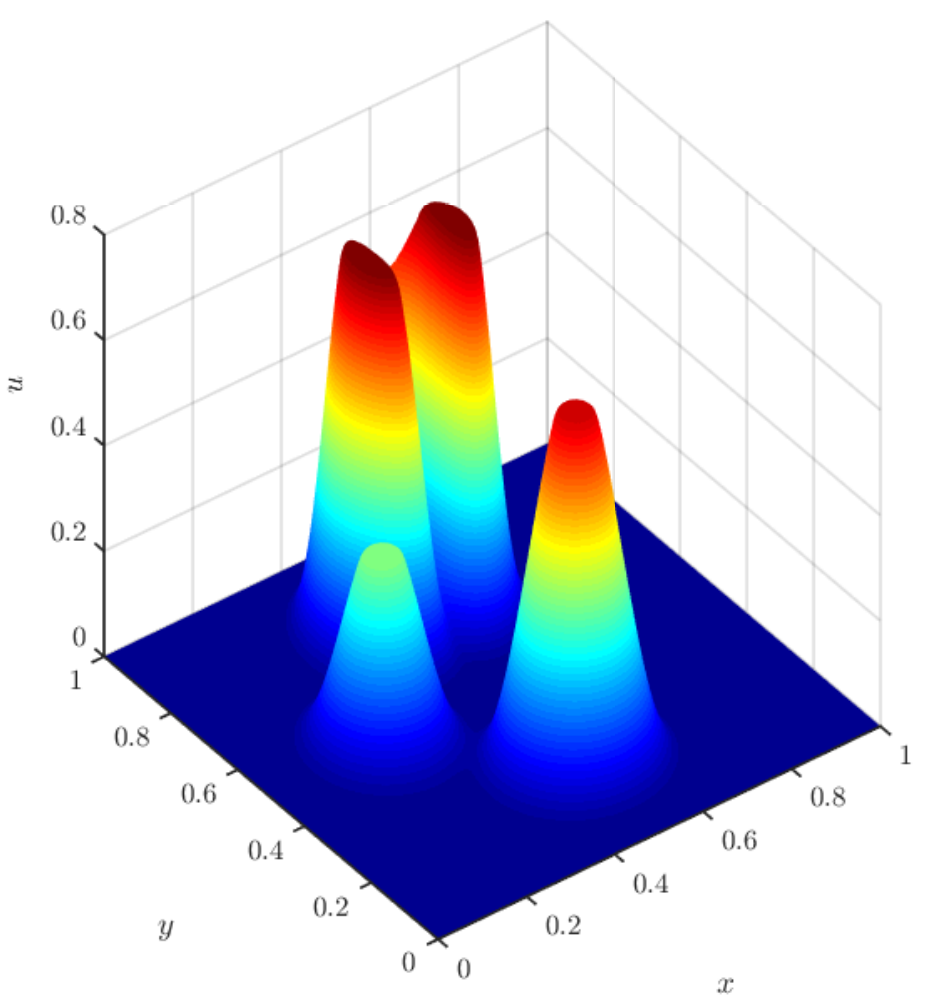}
				\caption{\footnotesize Global DMP scheme.}\label{sfig:bodyrotb}
			\end{subfigure}
			\begin{subfigure}[t]{0.45\textwidth}
				\includegraphics[width=\textwidth]{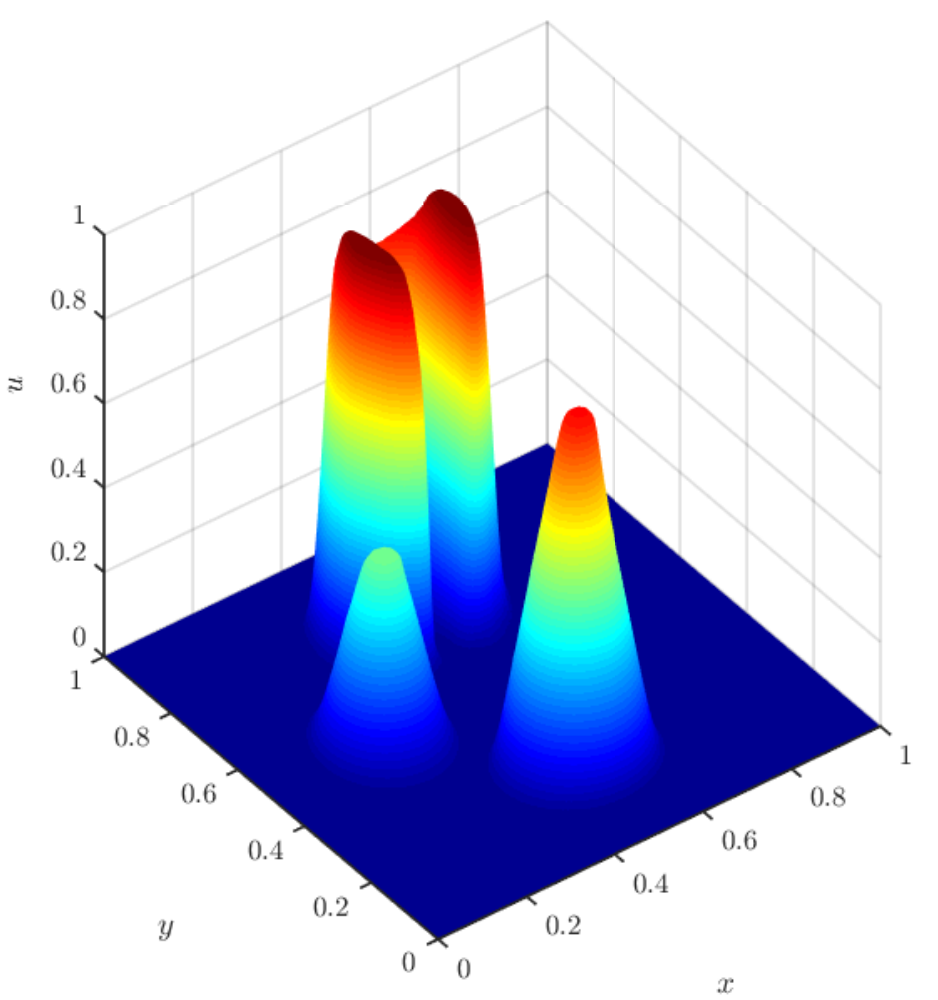}
				\caption{\footnotesize LED DMP nonsmooth stabilization.}\label{sfig:bodyrotc}
			\end{subfigure}
			\caption{3 Body rotation test results using discrete problem \eqref{eq:algebraicdis} and two different artificial diffusions (\eqref{eq:soft_nu} leading an LED scheme, and \eqref{eq:nuConsMass} with \eqref{eq:alpha_smooth} leading a global DMP scheme). Using a $150\times 150\ Q_1$ element mesh, and parameters: $q=25$, $\gamma=10^{-8}$, {$\sigma=|\beta|10^{-10}$}, $\varepsilon=10^{-4}$, and $\Delta t=10^{-3}$.}
			\label{fig:bodyrotation}
		\end{figure}
		\begin{figure}[h]
			\centering
			\includegraphics[width=0.9\textwidth]{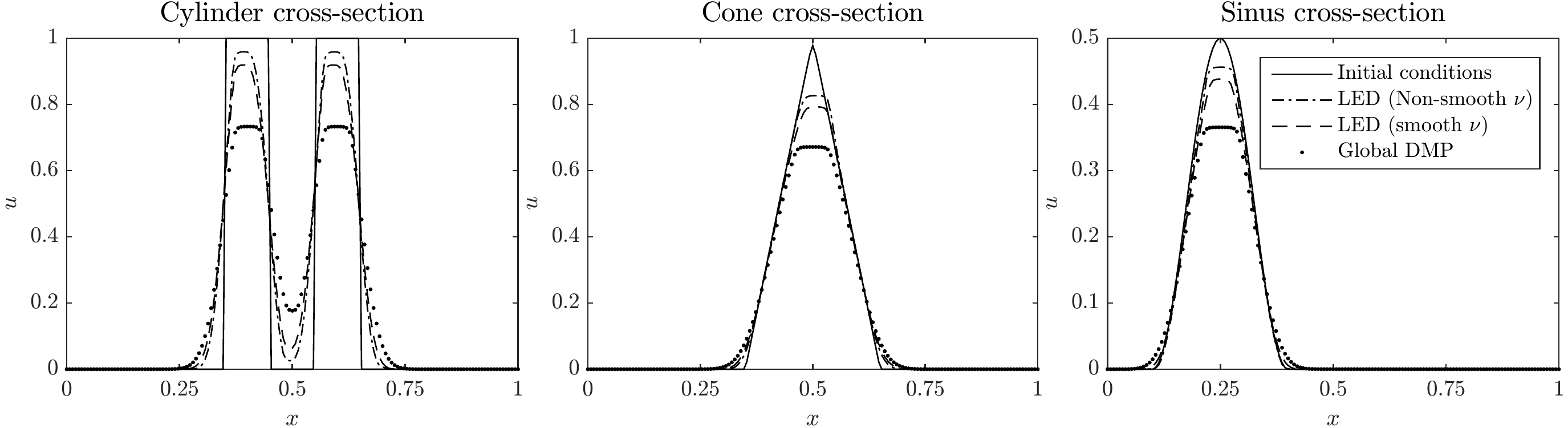}
			\caption{Cross-sections of each for the figures rotated in the three body rotation benchmark. The parameters used are $q=25$, $\gamma=10^{-8}$, $\sigma=|\beta|10^{-10}$, $\varepsilon=10^{-4}$, and $\Delta t=10^{-3}$, in a $150\times 150\ Q_1$ element mesh. The discrete problem \eqref{eq:algebraicdis} is used in combination with three different artificial diffusions \eqref{eq:soft_nu} and \eqref{eq:nu} leading to a LED scheme, and \eqref{eq:nuConsMass} leading to a global DMP scheme.}
			\label{fig:bodyrotationCS}
		\end{figure}
		
		\subsection{Burgers' equation}
		Finally, let us test our stabilization with a nonlinear transient problem. Particularly the 2D Burgers' equation, i.e. equation \eqref{eq:transient_transport} with $\mathbf{v}\doteq(1,1)\nicefrac{u}{2}$, is solved on $\domain=[0,1]\times[0,1]$ using a $150\times 150\,Q_1$ mesh. 
		The discretization in time is performed using the BE method with a time step of $10^{-2}$. The initial conditions at $t=0$ are
		\begin{equation}
			u_0(x,y)\doteq \left\{\begin{array}{ccccc}
				-0.2 & \text{if}& x<0.5 &\text{and} & y>0.5 \\
				-1 & \text{if}& x>0.5 &\text{and} & y>0.5 \\
				0.5 & \text{if}& x<0.5 &\text{and} & y<0.5 \\
				0.8 & \text{if}& x>0.5 &\text{and} & y<0.5
			\end{array}
			\right. ,
		\end{equation}
		and the solution is advanced until $t=0.5$.
		
		\begin{figure}[h]
			\centering
			\begin{subfigure}[b]{0.45\textwidth}
				\includegraphics[width=\textwidth]{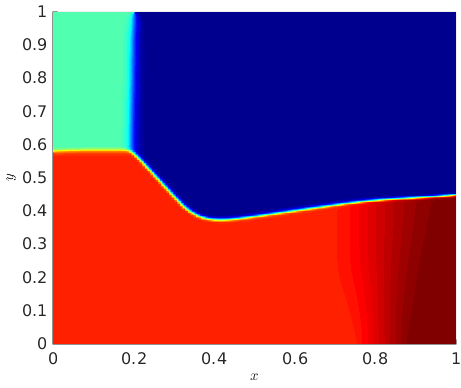}
				\caption{Solution for:$q=1$, $\varepsilon=10^{-2}$, {$\sigma=|\beta|10^{-6}$}, and $\gamma=10^{-8}$.}\label{sfig:burgersa}
			\end{subfigure}
			\quad
			\begin{subfigure}[b]{0.45\textwidth}
				\includegraphics[width=\textwidth]{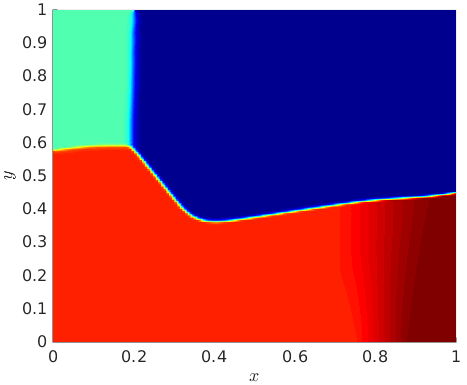}
				\caption{Solution for: $q=4$, $\varepsilon=10^{-3}$, {$\sigma=|\beta|10^{-7}$}, and $\gamma=10^{-8}$.}\label{sfig:burgersb}
			\end{subfigure}
			\caption{Burger's equation solutions at $t=0.5$ using discrete problem \eqref{eq:algebraicdis} and \eqref{eq:nu} with \eqref{eq:soft_nu}. Using a $150\times 150\ Q_1$ element mesh, $\Delta t=10^{-2}$, and two sets of parameters $q$, $\gamma$, $\sigma$, and $\varepsilon$.}
			\label{fig:burgers}
		\end{figure}
		
		The following stabilization parameters have been used for obtaining the results in Fig. \ref{sfig:burgersa}: $q=1$, $\varepsilon=10^{-3}$, {$\sigma=|\beta|10^{-6}$}, and $\gamma=10^{-8}$.
		Although the parameters used are not enforcing a particularly sharp solution (see Figs. \ref{fig:linear-conv} and \ref{fig:circular-conv}), Fig. \ref{sfig:burgersa} shows properly transported and minimally smeared shocks. Only in the lower right region the method appears to be more diffusive than desired. Notice that in that region the gradient in the $x$ direction spreads as $y$ increases, while it should not. Nevertheless, in Fig. \ref{sfig:burgersb}, that shows the solution for $q=4$, $\varepsilon=10^{-4}$, {$\sigma=|\beta|10^{-7}$}, and $\gamma=10^{-8}$. the method is less diffusive and the obtained shocks are even sharper. In any case, both choices satisfy the DMP for all time steps.
		
		\section{Conclusions}\label{sec:conclusions}
		
		In this work, we have considered a nonlinear stabilization technique for the FE approximation of scalar conservation laws with implicit time stepping. The method relies on an artificial diffusion method, based on a graph-Laplacian operator. The artificial diffusion is judiciously chosen in order to satisfy a local DMP for steady problems. It is nonlinear, since it depends on a shock detector. Further, the resulting method is linearity preserving. The same shock detector is used to gradually lump the mass matrix. The resulting method is LED, positivity preserving, and also satisfies a global DMP. Lipschitz continuity has also been proved.
		
		However, the resulting scheme is highly nonlinear, leading to very poor nonlinear convergence rates, even using Anderson acceleration techniques. It is due to the fact that the nonlinear operator to be inverted at every time step is non-differentiable. The critical problem of nonlinear convergence of implicit monotonic methods based on nonlinear artificial diffusion have already been previously reported in the literature (see \cite{kuzmin_linearity-preserving_2012}). As a result, we propose a smooth version of the scheme. It leads to twice differentiable nonlinear stabilization schemes, which allows one to straightforwardly use Newton's method using the exact Jacobian. Twice differentiability ensures quadratic convergence.
		
		We have considered two nonlinear solvers, namely Anderson acceleration and Newton's method. We have observed numerically that the effect of the smoothness has a positive impact in the reduction of the computational cost. The impact of using Newton's method versus Anderson acceleration is also very positive. In general, using the Newton method with a smooth version of the method we can reduce 10 to 20 times the number of iterations of Anderson acceleration with the original non-smooth algorithms. 
		
		All the monotonic properties are satisfied (as theoretically proved) in the numerical experiments. Steady and transient linear transport, and transient Burgers' equation have been considered in 2D. In any case, these properties are only true for the converged solution, but not for iterates. In this sense, we have also proposed the concept of projected nonlinear solvers, where a projection step is performed at the end of every nonlinear iterations onto a FE space of admissible solutions. The space of admissible solutions is the one that satisfies the desired monotonic properties (maximum principle or positivity). The projection has no effect on the quality of the nonlinear convergence.
		
		Future work should tackle the entropy stability analysis of the resulting schemes when applied to nonlinear problems. Some initial results in this direction can be found in \cite{burman_nonlinear_2007}. The extension to systems of conservation laws and higher order methods in space and time is another interesting line of research.

		\appendix
		\section{Proof of Theorem \ref{th:lipschitz}}\label{proof_continuity}
		
		Let us proof Theorem \ref{th:lipschitz}. We assume that the FE mesh is quasi-uniform in order to reduce technicalities. However, the proof for Lipschitz continuity can be extended to more general meshes. We denote $A=cB$ as $A\eqsim B$ and $A<cB$ as $A\lesssim B$, for any positive constant $c$ that does not depend on the numerical or physical parameters.
		
		From the definition of the nonlinear stabilization in \eqref{eq:stab}, we get
		\begin{align}
			| \langle \B(u)u,z \rangle - \langle \B(v)v,z \rangle| \leq  & 
			\left|\sum_{{i \in \nodes }}
			\sum_{{j\in\neighborhood[i]}} \nu_\ij(v)  \ell(i,j) (u_j-v_j) z_i \right|
			\label{res1} \\ &  
			+ 
			\left| 
			\sum_{{i \in \nodes }}
			\sum_{{j\in\neighborhood[i]}} (\nu_\ij(u) - \nu_\ij(v)) \ell(i,j) u_j z_i\right|.
		\end{align}
		Using the definition of $|\beta|$, the Cauchy-Schwarz inequality, the fact that $\| \shapef[i ] \| \leq C h^{d/2}$, and the inverse inequality $\| \nabla v_h \| \lesssim h^{-1} \| v_h \|$ for $v_h \in \fespace$ (see \cite{BrennerScott08}), we get:
		\begin{equation}\label{res2}
			\F_\ij(w) \leq | \beta | \| \gradient \shapef[i] \|_\l2 \| \shapef[j] \|_\l2 \lesssim h^{d-1} | \beta |,
		\end{equation}
		for any $w \in \fespace^{\rm adm}$. Using \eqref{res2}, the first term in the RHS of \eqref{res1} is bounded as follows:
		$$
		\left| \sum_{{i \in \nodes }}
		\sum_{{j\in\neighborhood[i]}} \nu_\ij(v) \ell(i,j)(u_j-v_j)z_i \right| \lesssim
		h^{d-1}| \beta | | u - v |_\ell | z |_\ell.
		$$
		The second term is bounded using the Cauchy-Schwarz inequality:
		\begin{align}
			& \sum_{{i \in \nodes }} \sum_{{j\in\neighborhood[i]}} (\nu_\ij(u) -
			\nu_\ij(v)) \ell(i,j) u_j z_i \label{boundij3}
			\lesssim
			\left | \sum_{{i \in \nodes }} \sum_{{j\in\neighborhood[i]}} \half (\nu_\ij(u) -
			\nu_\ij(v))^2 (u_i - u_j)^2 \right |^\half \times | z |_\ell.
		\end{align}
		Using \eqref{res2}, we have:
		\begin{align}
			& \nu_\ij(u) - \nu_\ij(v)   \\ & = \max\{\shock[i](u) \F_{\ij}(u),\shock[j](u) \F_{\ji}(u),0\} - \max\{\shock[i](v) \F_{\ij}(v),\shock[j](v) \F_{\ji}(v),0\} \label{viscobound}\\
			&\leq \max\{(\shock[i](u)\F_{\ij}(u)  - \shock[i](v) \F_{\ij}(v), \shock[j](u) \F_{\ji}(u) - \shock[j](v) \F_{\ji}(v),0\} \\
			& \lesssim h^{d-1}| \beta | \max\{|\shock[i](u) - \shock[i](v)| , |\shock[j](u)-\shock[j](v)| \}.
		\end{align}
		Let us assume that $\sum_{j\in\neighborhood[i]} \mean{\left|\gradient \unk\cdot \rr_{ij}\right|}_{ij} \neq 0$. (The other case is straightforward.) On one hand, for a non-degenerate FE mesh, we have that $c h \leq \rr_{ij} \leq C h$, $j \in \symneigh[i]$, for positive constants $c, C$ that do not depend on $h$. Using this fact in the definition of the shock detector \eqref{eq:alpha_lp}, we get:
		\begin{align}
			\detector[i](u)^{\frac{1}{q}} &= 
			\frac{\left|{\sum_{j\in\neighborhood[i]} \jump{\gradient \unk}_{ij}}\right|}{\sum_{j\in\neighborhood[i]} 2\mean{\left|\gradient \unk \cdot \rr_{ij}\right|}_{ij}}   =
			\frac{\left| \sum_{j\in\neighborhood[i]} \frac{u_i-u_j}{|\rr_{ij}|} +  \frac{u_i-u_j^\sym}{|\rr_{ij}^\sym|} \right|}
			{
				\sum_{j\in\neighborhood[i]} \frac{|u_i-u_j|}{|\rr_{ij}|} + \frac{|u_i-u_j^\sym|}{|\rr_{ij}^\sym|}
			}\label{alphabound} \\ & \eqsim
			\frac{\left| \sum_{j\in\neighborhood[i]} (u_i-u_j) +  (u_i-u_j^\sym) \right|}
			{
				\sum_{j\in\neighborhood[i]} {|u_i-u_j|} + {|u_i-u_j^\sym|}
			}
			. 
		\end{align}
		Now, we use the following result for two sequences $\{a_i\}_{i=1}^n$ $\{b\}_{i=1}^n$ (see \cite{barrenechea_2016} for further details):
		\begin{align}\label{absman}
			& \frac{|\sum_{i=1}^n a_i|}{\sum_{i=1}^n |a_i|}-
			\frac{|\sum_{i=1}^n b_i|}{\sum_{i=1}^n |b_i|}
			=
			\frac{|\sum_{i=1}^n a_i| - |\sum_{i=1}^n b_i|}{\sum_{i=1}^n |a_i|}
			+ \sum_{i=1}^n |b_i| \left( 
			\frac{1}{\sum_{i=1}^n |a_i|}-
			\frac{1}{\sum_{i=1}^n |b_i|}\right) \\ &
			\leq
			\frac{|\sum_{i=1}^n a_i- b_i|}{\sum_{i=1}^n |a_i|}
			+
			\frac{
				\sum_{i=1}^n |b_i|-\sum_{i=1}^n |a_i|
			}
			{\sum_{i=1}^n |a_i|}
			\leq 
			\frac{|\sum_{i=1}^n a_i- b_i| + \sum_{i=1}^n |a_i- b_i|}{\sum_{i=1}^n |a_i|} \\
			& \leq 2
			\frac{\sum_{i=1}^n |a_i- b_i|}{\sum_{i=1}^n |a_i|}. \label{absman}
		\end{align}
		Using simple algebraic manipulation, we have  $a^q-b^q = (a-b) \sum_{k=0}^{q-1} a^k b^{q-i}k$ for $q \in \mathbb{N}^+$. For $a,b \in [0,1]$, it leads to $|a^q-b^q| \leq q |a-b|$ (see \cite{barrenechea_2016}). This inequality, together with \eqref{alphabound} and \eqref{absman}, leads to:
		\begin{align}
			\frac{1}{q} | \shock[i](u) - \shock[i](v) | & \lesssim 
			\frac{\left| \sum_{j\in\neighborhood[i]} ((u-v)i-(u-v)_j) +  ((u-v)_i-(u-v)_j^\sym) \right|}
			{
				\sum_{j\in\neighborhood[i]} {|u_i-u_j|} + {|u_i- u_j^\sym|}
			}.\label{boundij}
		\end{align}
		On the other hand, the bounds
		$$|u_i-u_j | \leq
		\sum_{k\in\neighborhood[i]} | u_i-u_k | \quad \hbox{ and  } \quad |u_i-u_j | \leq 
		\sum_{k\in\neighborhood[j]} | u_j-u_k |,$$ 
		\eqref{viscobound}, and \eqref{boundij}, yield
		\begin{align}
			(\nu_\ij(u) - \nu_\ij(v)) (u_i - u_j)
			\lesssim &q h^{d-1} | \beta |
			\sum_{k\in\symneigh[i]} | (u-v)_i-(u-v)_k |  \label{visc2} \\ 
			& + q h^{d-1} | \beta |
			\sum_{k\in\symneigh[j]} | (u-v)_j-(u-v)_k |.
		\end{align}
		The second term is bounded by combining \eqref{boundij3}, \eqref{visc2}, and the fact that the number of elements surrounding a node is bounded above independently of $h$:
		\begin{align} 
			& \sum_{{i \in \nodes }} \sum_{{j\in\neighborhood[i]}} (\nu_\ij(u) -
			\nu_\ij(v)) \ell(i,j) u_j z_i \lesssim q h^{d-1} | \beta | |u-v|_\ell |z|_\ell.
		\end{align}
		Next, we have to prove that the nonlinear mass matrix is also Lipschitz continuous. First, we note that
		\begin{align}
			& \sum_{j\in\neighborhood[i]} 
			(1-\detector[i](u_h))(\shapef[j], \shapef[i]) u_j + \detector[i](u_h)(1,\shapef[i]) u_i
			\\ & = 
			\sum_{j\in\neighborhood[i]} (\shapef[j], \shapef[i]) u_j + 
			\detector[i](u_h) (\shapef[j], \shapef[i])  (u_i - u_j).
		\end{align}
		Thus
		\begin{align}
			\langle \M(u)u,z \rangle - \langle \M(v)v,z \rangle \leq  & 
			\sum_{{i \in \nodes }}
			\sum_{{j\in\neighborhood[i]}} (\shapef[i],\shapef[j])(u_j - v_j)z_i
			\\ &+ \sum_{{i \in \nodes }}\sum_{{j\in\neighborhood[i]}} (\shapef[i],\shapef[j])(u_i - u_j)(
			\detector[i](u_h) - \detector[i](v_h))z_i \\ &
			+ \sum_{{i \in \nodes }} \sum_{{j\in\neighborhood[i]}} (\shapef[i],\shapef[j])((u+v)_i - (u+v)_j)\detector[i](v_h) z_i. 
		\end{align}
		Bounds for the second and third term follow the same lines as above. For the second term, we proceed as in \eqref{boundij3}, getting:
		\begin{align}
			& \sum_{{i \in \nodes }} \sum_{{j\in\neighborhood[i]}} 
			(\shapef[i],\shapef[j])(u_i - u_j)
			(\detector[i](u_h) - \detector[i](v_h))
			z_i \\ &
			\lesssim
			\left | \sum_{{i \in \nodes }} \half \sum_{{j\in\neighborhood[i]}} (\shapef[i],\shapef[j]) (\detector[i](u_h) - \detector[i](v_h))^2 (u_i - u_j)^2 \right |^\half \times \| z\| \\ &
			\lesssim q h^{\frac{d}{2}} |u-v|_\ell \|z\|.
		\end{align}
		where we have used the spectral equivalence of the consistent and lumped mass matrices in the last inequality. The first and third term are easily bounded as
		\begin{align}
			&\sum_{{i \in \nodes }}
			\sum_{{j\in\neighborhood[i]}} (\shapef[i],\shapef[j])(u_j - v_j)z_i  \leq \|u-v\|\|z\|, \\
			&\sum_{{i \in \nodes }} \sum_{{j\in\neighborhood[i]}} (\shapef[i],\shapef[j])((u+v)_i - (u+v)_j)\detector[i](v_h) z_i \leq q h^{\frac{d}{2}} |u-v|_\ell \|z\|. 
		\end{align}
		It proves the theorem.

\bibliographystyle{siam}
\bibliography{bib-cf}

\end{document}